\newcommand{\R}{\mathbb{R}}
\newcommand{\C}{\mathbb{C}}
\newcommand{\Z}{\mathbb{Z}}
\newcommand{\N}{\mathbb{N}}
\newcommand{\T}{\mathcal{T}}
\newcommand{\M}{\mathcal{M}}
\newcommand{\Sc}{\mathcal S}
\newcommand{\MS}{\mathcal{M}\mathcal{S}}
\newcommand{\Cbi}{\mathcal{C}^\infty_b}
\newcommand{\lan}{\langle}
\newcommand{\ran}{\rangle}
\renewcommand{\S}{\mathfrak S}
\newcommand{\bep}{\boldsymbol\epsilon}
\newcommand{\bsigma}{{\boldsymbol\sigma}}
\newcommand{\bs}{{\boldsymbol s}}
\newcommand{\by}{{\boldsymbol y}}
\newcommand{\bx}{{\boldsymbol x}}
\newcommand{\bmu}{{\boldsymbol\mu}}
\newcommand{\Sig}{\Sigma_{n\times r}(\R_{\geq0})} 
\newcommand\restr[2]{{
  \left.\kern-\nulldelimiterspace 
  #1 
  \vphantom{\big|} 
  \right|_{#2} 
  }}
\newtheorem{theorem}{Theorem}[section]
\newtheorem{prop}[theorem]{Proposition} 
\newtheorem{ex}[theorem]{Example}
\theoremstyle{definition}
\newtheorem{rk}[theorem]{Remark}
\newtheorem{defn}[theorem]{Definition}
\newtheorem{defn-prop}[theorem]{Definition-Proposition}
\newtheorem{lem}[theorem]{Lemma}
\newtheorem{cor}[theorem]{Corollary}
\newtheorem{thm}[theorem]{Theorem}
\title{Pole structure of Shintani zeta functions and Newton Polytopes}
\date{}
\author{Diego L\'opez\thanks{Email address: \texttt{lopezvalenci@uni-potsdam.de}} \\\textit{Institute for Mathematics}\\\textit{University of Potsdam}\\\textit{D-14476 Potsdam, Germany.}}
\begin{document}
\maketitle
\begin{abstract}
It is known that Shintani zeta functions, which generalise multiple zeta functions, extend to meromorphic functions with poles on affine hyperplanes. We refine this result in showing that the poles lie on hyperplanes parallel to the facets of certain convex polyhedra associated to the defining matrix for the Shintani zeta function. Explicitly, the latter are the Newton polytopes of the polynomials induced by the columns of the underlying  matrix. We then prove that the coefficients of the equation which describes the hyperplanes in the canonical basis are either zero or one, similar to the poles arising when renormalising generic Feynman amplitudes. For that purpose, we introduce an algorithm to distribute weight over a graph such that the weight at each vertex satisfies a given lower bound. 
\end{abstract}

\tableofcontents
\section*{Introduction}

\addcontentsline{toc}{section}{Introduction}

The Riemann zeta function $\zeta(s)=\sum_{n\geq1}n^{-s}$ is known to be absolutely convergent whenever $\Re(s)>1$. Riemann proved in \cite{R} that $\zeta$ admits a meromorphic continuation to the whole complex plane with a simple pole at $s=1$. Several multivariable generalisations of the Riemann zeta functions have been made, for instance the multiple zeta functions (or Euler-Riemann-Zagier zeta functions) \cite{Za},\cite{Z} also called poly zeta functions \cite{C}, Shintani zeta functions \cite{GPZ1},\cite{M1}, conical zeta functions \cite{GPZ1},\cite{GPZ2},\cite{CGPZ1}, Mordell-Tornheim zeta functions \cite{M1},\cite{M2}, branched or arborified zeta functions \cite{CGPZ2, Cl1, Cl2}, etc. Meromorphic continuations of such generalisations have called the attention of numerous mathematicians \cite{AET}, \cite{M1}, \cite{M2}, \cite{Z}. 

In this document we give a precise description of the polar loci of the Shintani zeta functions which we now recall. 

\begin{defn}\label{defn:Ashintani} 
\phantom{ }
\begin{itemize}
\item Let $\Sigma_{n\times r}(\R_{\geq0})$ be the set of $n\times r$ matrices with real non-negative arguments, and with at least one positive argument in each row and in each column.
\item Given a matrix $A=\{a_{ij}\}_{1\leq i\leq n, 1\leq j\leq r}\in\Sig$ the {\bf Shintani zeta function} associated to $A$ is given by 
\begin{equation}\label{eq:shintanizeta}\zeta_A(\bs):=\sum_{m_1\geq1}\cdots\sum_{m_r\geq1}(a_{11}m_1+\dots+a_{1r}m_r)^{-s_1}\times\cdots\times(a_{n1}m_1+\dots+a_{nr}m_r)^{-s_n}.
\end{equation}
\end{itemize}
\end{defn}

The Riemann zeta function corresponds to the case when $A$ is a $1\times1$ matrix, multiple zeta functions correspond to the case when the matrix $A$ is a square lower triangular matrix with ones on and under the diagonal. Shintani zeta values, i.e. when $\bs\in\Z_{>r}^n$, span the space of conical zeta values (See \cite[Proposition 5.16]{GPZ1}).
 
 The sum on the right hand side of (\ref{eq:shintanizeta}) is absolutely convergent whenever $\Re(s_i)>r$ for every $1\leq i\leq n$ as we recall in Corollary \ref{cor:Shintani_abs_conv_domain2}. Matsumoto proved in \cite[Theorem 3]{M1} that (\ref{eq:shintanizeta}) admits a meromorphic continuation to $\C^n$ with possible linear poles located on the hyperplanes \begin{equation}\label{eq:polesMatsumoto}c_1s_1+\dots+c_ns_n=u(c_1,\dots,c_n)-l,\end{equation}
where the $c_i$ lie in $\Z_{\geq0}$, $u(c_1,\dots,c_n)$ in $\Z$, and $l$ in $\Z_{\geq0}$. His proof uses the Mellin-Barnes integration formula which involves a contour integral of the product of Gamma functions (See for instance \cite{M2}). 

The main purpose of this paper is to refine Matsumoto's result in specifying the coefficients $c_i$ in the equations of the hyperplanes describing the poles (Theorem \ref{thm:PolarlocusShintani}). We show that they are determined by normal vectors to the facets of the Newton polytopes corresponding to the product of the linear forms given by the columns of the matrix $A$. We moreover prove that the coefficients of those vectors, and therefore the coefficients $c_i$ in \eqref{eq:polesMatsumoto}, are either $0$ or $1$. The latter implies that the poles of the Shintani zeta functions generalize those of the generic Feynman amplitudes via analytic regularization, using what mathematicians call Riesz regularization in each variable. More precisely, it was shown in \cite{S,DZ} that the poles of the generic Feynman integrals using analytic regularization are of the form \begin{equation}\label{eq:Feynam_poles}s_{j_1}(s_{j_1}+s_{j_2})\cdots(s_{j_1}+\cdots+s_{j_r})=0\end{equation}
where the $s_{j_i}$ are vectors in the canonical basis of $\C^n$. Since the coefficients of the hyperplanes carrying the poles are $0$ or $1$, such poles also correspond to those of the Shintani zeta functions. However, the ones of the Shintani zeta functions are more general in the sense that they do not require the vectors to be nested as in \eqref{eq:Feynam_poles}. 

\

One of the main tools we use to study Shintani zeta functions are Newton polytopes, the definition of which we recall now. Consider a complex Laurent polynomial in $n$ variables $p:\C^n\to\C$ of the form  \begin{equation}\label{eq:notapoly}p(\bep)=\sum_{ \alpha\in \mathcal A}a_{ \alpha} \bep^{ \alpha},\end{equation} where $\mathcal A$ is a finite subset of $\Z^n$, and we have set $ \bep^{\alpha}:=\bep_1^{\alpha_1}\cdots \bep_n^{\alpha_n}$. The Newton polytope of $p$, denoted by $\Delta_p$, is the convex hull generated by $\mathcal A$. Notice that $\Delta_p$ is a compact subset of $\R^n$ since $\mathcal A$ is finite.
We illustrate this with a short  example.

\begin{ex}\label{ex:NewPol} For $p(\bep)=\epsilon_1+\epsilon_2^2+\epsilon_1\epsilon_2$ then $\mathcal{A}$ is given by $\{(1,0),(0,2),(1,1)\}$ and $\Delta_p$ is pictured below in blue color.

\begin{equation*}
\begin{tikzpicture}
\fill[blue!30!white] (0,2)--(1,1)--(1,0);
\draw[thick,->] (0,0) -- (1.5,0);
\draw[thick,->] (0,0) -- (0,2.5);
\foreach \x in {1}
   \draw (\x cm,1pt) -- (\x cm,-1pt) node[anchor=north] {$\x$};
\foreach \y in {1,2}
    \draw (1pt,\y cm) -- (-1pt,\y cm) node[anchor=east] {$\y$};
\draw (1,0)--(0,2)--(1,1)--(1,0);
\end{tikzpicture} 
\end{equation*}  
\end{ex}

In order to make our result more precise, let us first recall that any convex polytope in $\R^n$ as well as being the convex hull of its vertices, can also be described as the intersection of half spaces determined by its facets: 
\begin{equation}\label{eq:Newtonpolytope}
\bigcap_{k=1}^N\{{\boldsymbol \sigma}\in\R^n;\lan{\boldsymbol\mu}_k,{\boldsymbol\sigma}\ran\geq\nu_k\}.
\end{equation}
Here $N$ is the number of facets of the polytope, the $\nu_k$'s are integers, and the ${\boldsymbol\mu}_k$'s are vectors in the lattice $\Z^n$ on the inward normal direction of the facets with mutually coprime coordinates. 
We recall from \cite[Definition 2.1]{B} that a polyhedron in $\R^n$ is a set $P\subset\R^n$ defined by finitely many linear inequalitites \[P:=\{x\in \R^n: \ell_i(x)\geq \alpha_i,\,i\in I\}.\]Here $I$ is finite, $\alpha_i\in\R$ and $\ell_i:\R^n\to\R$ are linear functions. The description \eqref{eq:Newtonpolytope} shows that a polytope is in particular a polyhedron. A type of sets which will be used in the sequel is of the form $\Delta+\R^n_+$ where $\Delta$ is a polytope. Notice that $\Delta+\R^n_+$ is not a polytope since it is not bounded, however it is a polyhedron. 

\

{\bf Continuation of Example \ref{ex:NewPol}:} \textit{For} $p(\bep)=\epsilon_1+\epsilon_2^2+\epsilon_1\epsilon_2$, \textit{$\Delta_p$ can be described as the intesection of half spaces as follows.} \[ \Delta_p=\{\bsigma:\lan\bsigma,(2,1)\ran\geq2\}\cap\{\bsigma:\lan\bsigma,(-1,0)\ran\geq-1\}\cap\{\bsigma:\lan\bsigma,(-1,-1)\ran\geq-2\}.\] 

We prove in Theorem \ref{thm:PolarlocusShintani} that the coefficients $c_i$ in \eqref{eq:polesMatsumoto} are given by the coordinates of the vectors ${\boldsymbol\mu}_k$ in the canonical basis, and we express the integers $u(c_1,\cdots,c_n)$ in \eqref{eq:polesMatsumoto} in terms of the numbers $\nu_k$.
It is convenient to denote with some abuse of notation by $C_i$ a linear form in $(\R^n)^*$ associated to the column $C_i$ of the matrix, namely $C_i(\epsilon)=\lan C_i,\epsilon\ran$.

\begin{ex}
Consider the Shintani zeta associated to the matrix \[A=\begin{pmatrix}
1&0\\1&1
\end{pmatrix}.\]
Then the linear forms induced by its columns are $C_1(\epsilon_1,\epsilon_2)=\epsilon_1+\epsilon_2$ and $C_2(\epsilon_1+\epsilon_2)=\epsilon_2$. It follows from Theorem \ref{thm:PolarlocusShintani} that in this case the poles of $\zeta_A$ are parallel to the facets of $\Delta_{C_1}+\R^2_+$, $\Delta_{C_1}+\R^2_+$ and $\Delta_{C_1C_2}+\R^2_+$. The following figure represents the polyhedron $\Delta_{C_1}+\R^2_+$ and the dashed lines carry the poles parallel to its facets.

\begin{equation*}
\begin{tikzpicture}
\fill[blue!30!white] (0,1.5)--(0,1)--(1,0)--(1.5,0)--(1.5,1.5);
\draw[thick,->] (0,0) -- (1.5,0);
\draw[thick,->] (0,0) -- (0,1.5);
\draw[thick,->] (0,0) -- (-1.5,0);
\draw[thick,->] (0,0) -- (0,-1.5);
\foreach \x in {-1,1}
   \draw (\x cm,1pt) -- (\x cm,-1pt) node[anchor=north] {$\x$};
\foreach \y in {-1,1}
    \draw (1pt,\y cm) -- (-1pt,\y cm) node[anchor=east] {$\y$};
\draw[very thick, blue] (0,1.5)--(0,1)--(1,0)--(1.5,0);
\draw[dashed, blue] (-1.5,1.5)--(1.5,-1.5);
\draw[dashed, blue] (-1.5,0.5)--(0.5,-1.5);

\draw[dashed, blue] (-0.5,1.5)--(1.5,-0.5);
\end{tikzpicture} 
\end{equation*}  

Notice that, even though $\Delta_{C_1}$ has empty interior, $\Delta_{C_1}+\R^n_+$ is of maximal dimension, and its facets are the ones that induce the poles.   

\end{ex}

Specialising to  case of multiple zeta functions, our result confirms the known polar description \cite{Z}, \cite{AET}, modulo the fact that in \cite{AET} the authors proved that for the hyperplanes of the type \[s_n+s_{n-1}=2-l\] only those with $l$ even carry poles.  

\subsubsection*{Methodology} 

Our method consists of three main steps: The first step is to express the Shintani zeta function in its domain of convergence as a multiple integral, namely as the multivariable Mellin transform of a Schwartz function  on $\R^n_{+}$ which, in a neighborhood of zero, can be extended to a meromorphic function with linear poles at zero. More precisely the integrand is a product of a Schwartz function on $\R^n_{\geq0}$ and the inverse $1/C_J$ of a polynomial where $C_J(\bep)=\prod_{j\in J}\lan C_j,\bep\ran$, the $C_j$ are the columns of the matrix $A$, $\lan ,\ran$ is the canonical inner product in $\R^n$, and $J\subset [r] $. 
The second step borrows ideas from Nilsson and Passare \cite{NP} who determined the domain of convergence and the analytic continuation of the Mellin transform of a rational function. We adapt their results to the case of a product of a Schwartz function times a rational function, which we then apply to the Mellin transform obtained in step one. 
The final step is to prove that the vectors $\bmu_k$ on the inward normal direction of the polyhedra obtained in step 2 are either zero or one and to provide an easy way to derive them from the columns of the matrix $A$. For this purpose, we borrow some tools from graph theory. More precisely, we provide an algorithm to distribute weight over a graph, such that the weight at each vertex is never lower than an imposed bound.

\subsubsection*{Organization of the paper and main results}

The paper is divided in five sections. In the first one we define the space of rational functions multiplied by a Schwartz function (Definition \ref{defn:MSRplus}). Then we prove that the map $\S$ from Definition-Proposition \ref{defn-prop:SumandIntCodomain} takes values in this space. This function plays a central role in Section 3 since it is used to build an inverse Mellin transform of the Shintani zetas. The second section is devoted to the study of the Mellin transform of rapidly decreasing functions and rational functions damped by a Schwartz function. In the first part of Section \ref{sec:Mellin} we prove how the domain of convergence of the Mellin transform of a function $g$ is enlarged when multiplied by a rapidly decreasing function. In the second part of Section \ref{sec:Mellin}, based in the results from \cite{NP}, we prove (Theorem \ref{thm:thm2np-phi}) that the Mellin transform of a rational function damped by a Schwartz function admits a meromorphic extension with a very precise description of the hyperplanes carrying the poles. The third section of the paper uses the results of the previous two in order to express the Shintani zeta function associated to a matrix $A$ as the Mellin transform of $\S(A)$ (See equation \eqref{eq:sigA}) and to find a meromorphic continuation of it.
In the fourth section we show that the vectors orthogonal to the polyhedra arising from inverse Mellin transform of the Shintani zetas have coefficients 0 or 1 when expressed in the canonical basis. This refines the results of Section 3 by proving that the poles of the Shintani zetas are carried by hyperplanes of the form described in equation \eqref{eq:polesMatsumoto} where the coefficients $c_i$ are equal to 1 or 0. We also derive an easy way to calculate such vectors directly from the matrix $A$ and provide some examples. In the fifth and last section of the paper we introduce an algorithm to distribute weight over a graph such that the weight at each vertex respects a lower bound. This algorithm, which to the author's knowledge is new, is used to prove Corollary \ref{cor:algorithm} which is essential for the results in Section 4.

To conclude, we feel this paper represents a step forward in the study
of Shintani zeta functions which are meromorphic functions with linear
poles, in that it provides a geometric interpretation of the location of their poles and relates their study to graph theoretic questions. It leads to several new and exciting questions about these objects. The focus in this article is on the pole structure, as obtained in Theorems \ref{thm:PolarlocusShintani} and \ref{thm:Feynman-mu}. This calls for a full-fledged multivariable Laurent expansion of the Shintani zeta functions around the poles along the lines of  [GPZ4], a question we hope to address in future work. Such Laurent expansions would provide a consistent way to evaluate the Shintani zeta functions at the poles. Also, the links set up here with graph theory open up new paths yet to be explored.

\

{\bf Notation:} Throughout the text $[r]$ will denote the set of positive integers less or equal than $r$, namely $\{1,\cdots,r\}$, and $\{e_i\}_{i=1}^n$ are the vectors of the canonical basis of $\R^n$. We also write vectors in bold font. The symbol $\subset$ means contained, not strictly contained.  Finally $\R^n_+$ is the open set $(0,+\infty)$, while $\R_{\geq0} $ includes zero, namely $\R_{\geq0}=[0,+\infty)$.

\subsubsection*{Acknowledgments}
This paper is part of my PhD thesis funded by the DAAD (Deutscher Akademischer Austauschdienst) to which I am very grateful for their financial support. I am also thankful to Sylvie Paycha for introducing me to the topic and for the many interesting discussions around this work. Let me thank Pierre Clavier, Dominique Manchon, Kohji  Matsumoto and Yannic Vargas, for their comments on a previous version of the paper which helped me greatly improve its overall quality. Finally, I thank Kohji Matsumoto for drawing my attention to  the reference [K], as well as Thierry Champion, Nicolas Juillet, and Armand Ley for their comments and references regarding the results in Section 5.

\begin{rk}

After finishing this paper, Kohji Matsumoto pointed out to me an impressive article by Komori \cite{K} which, among other things, treats the analytic continuation of some multivariable zeta functions that generalise the Shintani zeta functions of Definition \ref{defn:Ashintani}.
Using a multidimensional contour integral representation of such zeta functions, he provides a systematic method to find the hyperplanes carrying the possible singularities. We suspect, after some computations, that our results coincide in the case of Shintani zeta functions. An advantage of our method is the geometric approach given by the Newton polynomials. Also, our method seems to provide a more computationally friendly way to calculate the location of the possible singularities. Indeed given a matrix $A\in\Sig$, we only must calculate $2^r-1$ vectors as it is illustrated in the examples of Section \ref{sec:FamilyofM}, while in the method exposed in \cite{K} one has to compute $n!$ matrices.
\end{rk}

\section{The space of rational functions damped by a Schwartz function}\label{sec:Aclassof}

We recall what a Schwartz function is: For $\mathcal O$ an unbounded connected region of $\R^n$, a function $\phi:\mathcal O\to\R$ is said to be Schwartz, denoted by $\phi\in\Sc(\mathcal O)$, if it is smooth on $\mathcal O$ and for every pair $(\boldsymbol\alpha,\boldsymbol\beta)\in\Z^{2n}_{\geq0}$ the following is satisfied: \[\lim_{\substack {\bep\to\infty\\ \bep\in\mathcal O}}\bep^{\boldsymbol\alpha}\partial^{(\boldsymbol\beta)}\phi(\bep)=0.\]
Notice that in particular $\phi\in\Sc(\R_{+}^n)$ is not necessarily bounded while $\psi\in\Sc(\R^n_{\geq0})$ is. In this paper we use the space of rational functions damped by a Schwartz function. We make this precise in the following definition.

\begin{defn}\label{defn:MSRplus}
\phantom{ }
\begin{itemize}
\item We say a function $\phi:\R^n_+\to\R$ lies in the space $\Cbi(\R^n_+)$ if, and only if there exists a polynomial $p$ in $n$ real variables such that the product $p\phi$ extends to a bounded smooth function in $\R^n_{\geq0}$ with all its derivatives bounded. 
\item We say a function $\phi:\R^n_+\to\R$ lies in the space $\MS(\R^n_+)$ if, and only if there exists a polynomial $p$ in $n$ real variables such that $p\phi$ extends to a function in $\Sc(\R^n_{\geq0})$. In other words $\MS(\R^n_+)$ is the set of rational functions damped by a Schwartz function on $\R^n_+$.
\end{itemize}
\end{defn}

 In \cite{GPZ3} the space of germs of meromorphic of functions with linear poles is used. However they implicitly use a subspace of $\MS(\R^n_+)$, namely functions of the type $\frac{\phi}{\prod L_i}$ where $\phi\in\Sc(\R^n_{\geq0})$ is analytic at zero and the $L_i$'s lie in $(\R^n)^*$. In other words, they use the space of Schwartz functions on $\R^n_+$ which can be extended,  in a neighborhood of zero, to a meromorphic function with linear poles.

\begin{prop}\label{prop:Cb-MS-algebras}
$\Cbi(\R^n_+)$ (resp. $\MS(\R^n_+)$) is an $\R$-algebra (resp. non-unital $\R$-algebra) for the pointwise product of functions.
\end{prop}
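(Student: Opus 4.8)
The plan is to verify the algebra axioms directly, the only nontrivial point being closure under multiplication. Closure under addition and scalar multiplication is immediate: if $p_1\phi_1$ and $p_2\phi_2$ extend to bounded smooth functions with bounded derivatives on $\R^n_{\geq0}$ (resp. to Schwartz functions on $\R^n_{\geq0}$), then $(p_1p_2)(\phi_1+\phi_2) = p_2\cdot(p_1\phi_1) + p_1\cdot(p_2\phi_2)$; since a polynomial times a $\Cbi$-function need not stay bounded, one should instead take the common denominator-numerator more carefully: writing $\phi_i = q_i/p_i$ with $q_i := p_i\phi_i$ the bounded (resp. Schwartz) extension, we get $\phi_1+\phi_2 = (p_2q_1 + p_1q_2)/(p_1p_2)$, so $(p_1p_2)(\phi_1+\phi_2) = p_2q_1+p_1q_2$. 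For $\MS$ this lies in $\Sc(\R^n_{\geq0})$ because the Schwartz space is stable under multiplication by polynomials; for $\Cbi$ one must note that $p_2q_1+p_1q_2$ is again bounded with bounded derivatives — but this is false in general since $p_2q_1$ need not be bounded. Hence the correct normalization is to fix, once and for all, a \emph{single} denominator: given $\phi\in\Cbi(\R^n_+)$, choose $p$ of the form $p=(1+\epsilon_1^2+\dots+\epsilon_n^2)^N$ for $N$ large, so that $p\phi$ and all its derivatives are bounded. This is possible because if $p_0\phi$ is bounded with bounded derivatives for some $p_0$, then $(1+|\epsilon|^2)^N\phi = \big((1+|\epsilon|^2)^N/p_0\big)(p_0\phi)$ is bounded with bounded derivatives once $(1+|\epsilon|^2)^N/p_0$ is a bounded smooth function with bounded derivatives on $\R^n_{\geq0}$, which holds for $N$ large enough since $p_0$ has bounded coefficients. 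With such symmetric denominators $(1+|\epsilon|^2)^{N_1}$, $(1+|\epsilon|^2)^{N_2}$ the sum uses denominator $(1+|\epsilon|^2)^{\max(N_1,N_2)}$ and the product uses $(1+|\epsilon|^2)^{N_1+N_2}$, and in both cases the numerator is manifestly bounded with bounded derivatives, respectively Schwartz.

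So the key steps are: (i) prove the normalization lemma — every $\phi\in\Cbi(\R^n_+)$ (resp. $\MS(\R^n_+)$) admits a denominator of the form $(1+\epsilon_1^2+\dots+\epsilon_n^2)^N$; (ii) deduce closure under $+$, scalar multiplication, and pointwise $\cdot$ using that $\Cbi(\R^n_{\geq0})$ (bounded smooth with bounded derivatives) and $\Sc(\R^n_{\geq0})$ are themselves $\R$-algebras under pointwise product — the former by the Leibniz rule, the latter a standard fact; (iii) observe associativity, commutativity and distributivity are inherited from the pointwise operations on functions $\R^n_+\to\R$; (iv) note $\Cbi(\R^n_+)$ is unital (the constant function $1$ has denominator $1$), whereas $\MS(\R^n_+)$ is not, since a nonzero constant is not a Schwartz function times the reciprocal of a polynomial — if $p\cdot 1 = p \in\Sc(\R^n_{\geq0})$ then $p\equiv 0$, contradiction.

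The main obstacle is step (i), the normalization lemma, and within it the verification that for a given polynomial $p_0$ (with, say, $p_0\phi$ bounded with bounded derivatives) one can find $N$ with $(1+|\epsilon|^2)^N/p_0$ bounded and having bounded derivatives on $\R^n_{\geq0}$. This is not automatic because $p_0$ may vanish or change sign on $\R^n_{\geq0}$; the resolution is that we never need $1/p_0$ to be bounded — we only need $(1+|\epsilon|^2)^N\phi$ bounded with bounded derivatives, and this we get by writing $(1+|\epsilon|^2)^N\phi = \big[(1+|\epsilon|^2)^N/p_0\big]\cdot(p_0\phi)$ only on the region where $p_0\neq 0$, while on a neighborhood of the zero set of $p_0$ we use directly that $\phi$ is smooth there (as $\phi$ is smooth on all of $\R^n_+$ and, via $p_0\phi$ bounded smooth, the apparent singularities are removable) — in fact the cleanest route is to bypass $p_0$ entirely: since $p\phi$ is bounded smooth with bounded derivatives for \emph{some} $p$, and since any polynomial is dominated by $(1+|\epsilon|^2)^N$ on $\R^n_{\geq0}$ (as a function with bounded quotient), multiply through. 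I expect this to require only elementary estimates, after which steps (ii)–(iv) are routine bookkeeping with the Leibniz rule. The whole argument should be short; the subtlety lies entirely in choosing the denominators uniformly so that products of numerators stay in the correct function class.
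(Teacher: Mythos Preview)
For the pointwise product your argument coincides with the paper's: with denominators $p_1,p_2$ one has $(p_1p_2)(\phi_1\phi_2)=(p_1\phi_1)(p_2\phi_2)$, and both the bounded--smooth--with--bounded--derivatives class and $\Sc(\R^n_{\geq0})$ are closed under pointwise multiplication by the Leibniz rule. For addition in $\MS(\R^n_+)$ your route is in fact cleaner than the paper's. The paper only records that $\lambda p\phi+\mu q\psi\in\Sc(\R^n_{\geq0})$, which by itself does not exhibit a single polynomial $r$ with $r(\lambda\phi+\mu\psi)\in\Sc$; your observation that $(p_1p_2)(\phi_1+\phi_2)=p_2(p_1\phi_1)+p_1(p_2\phi_2)\in\Sc(\R^n_{\geq0})$, using stability of Schwartz functions under multiplication by polynomials, is the correct justification.

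The genuine gap is in your treatment of addition for $\Cbi(\R^n_+)$. Your normalization lemma is false. Take $n\geq2$ and $\phi(\bep)=1/\epsilon_1$; this lies in $\Cbi(\R^n_+)$ via $p_0=\epsilon_1$ (then $p_0\phi\equiv1$), yet for no $N$ is $(1+|\bep|^2)^N/\epsilon_1$ bounded near $\epsilon_1=0$. The zero set of $p_0$ sits on the boundary of $\R^n_{\geq0}$, precisely where $\phi$ blows up, so the removable--singularity sketch you give cannot work. Worse, this same example shows the difficulty is intrinsic to the statement rather than to your method: with $\phi_1=1/\epsilon_1$ and $\phi_2=1/\epsilon_2$ both in $\Cbi(\R^n_+)$, any polynomial $p$ for which $p(\phi_1+\phi_2)$ stays bounded near the coordinate hyperplanes must be divisible by $\epsilon_1\epsilon_2$; writing $p=\epsilon_1\epsilon_2 q$ gives $p(\phi_1+\phi_2)=(\epsilon_1+\epsilon_2)q$, a polynomial of positive degree whenever $q\neq0$, hence unbounded on $\R^n_{\geq0}$. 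Thus $\Cbi(\R^n_+)$ as defined is \emph{not} closed under addition for $n\geq2$, and neither your argument nor the paper's one-line claim can be completed. What does go through is closure under products (both spaces) and the full algebra structure for $\MS(\R^n_+)$, and for those parts your proof is sound.
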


\begin{proof}
We prove that $\Cbi(\R^n_+)$ is an algebra. Let $\phi$ and $\psi$ in $\Cbi(\R^n_+)$, i.e, there are polynomials $p$ and $q$  such that $p\,\phi$ and $q\,\psi$ are bounded smooth functions on $\R^n_{\geq0}$ with all its derivatives bounded. The fact that the space of bounded smooth functions with all its derivatives bounded is an algebra implies that for $(\lambda,\mu)\in\R^2$, $\lambda\, p\,\phi+\mu\, q\,\psi$ remains in this space. Also $ p\,q\,\phi\,\psi$ remains in this space proving that $\Cbi(\R_+^n)$ is an algebra. The unit is the function identical to $1$.

We prove that $\MS(\R^n_+)$ is a non unital algebra. Let $\phi$ and $\psi$ in $\MS(\R^n_+)$, i.e, there are polynomials $p$ and $q$  such that $p\,\phi\in\Sc(\R^n_{\geq0})$ and $q\,\psi\in\Sc(\R^n_{\geq0})$. The fact that $\Sc(\R^n_{\geq0})$ is a non-unital algebra implies that \[\lambda\, p\,\phi+\mu\, q\,\psi\in \Sc(\R^n_{\geq0})\ \ \ \ \forall (\lambda,\mu)\in\R^2,\]and\[ p\,q\,\phi\,\psi\in \Sc(\R^n_{\geq0}),\]
and thus $\MS(\R_+^n)$ is a non-unital algebra, since the function identical to $1$ is not in $\MS(\R_+^n)$.
\end{proof}

\begin{ex}
Let $L(\bep)=\sum_{i=1}^na_i\epsilon_i\in(\R^n)^*$ where every $a_i>0$. Let moreover $h:\R^n_{\geq0}\to\R$ be a smooth function which its bounded together with all its derivatives. Then $\big(\bep\mapsto\frac{h(\bep)\,e^{-L(\bep)}}{L(\bep)}\big) \in\MS(\R^n_+)$.
\end{ex}

We prove a Lemma which will be useful in the sequel.

\begin{lem}\label{lem:ToddisSchwartzinR}
The Todd function defined by $z\mapsto {\rm Td}(z)=\frac{z}{e^z-1}=\frac{ze^{-z}}{1-e^{-z}}$ is a Schwartz function when restricted to $\R^n_{\geq0}$.
\end{lem}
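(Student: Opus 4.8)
The plan is to reduce the multivariate statement to a one-variable estimate and then handle the multivariable bookkeeping. First I would observe that $\mathrm{Td}$ extends to a smooth function on a neighbourhood of $0$ in $\mathbb{C}$ (the zero of $e^z-1$ at $z=0$ is simple and cancelled by the factor $z$ in the numerator), so in particular $z\mapsto \mathrm{Td}(z)$ is smooth on all of $[0,+\infty)$: away from $0$ it is a quotient of smooth functions with non-vanishing denominator, and near $0$ it is given by its convergent power series $\mathrm{Td}(z)=\sum_{k\geq0}\frac{B_k}{k!}(-z)^k$ (up to sign conventions), in particular bounded with all derivatives bounded on any compact interval. Thus the only issue is the behaviour as $z\to+\infty$, where I would use the representation $\mathrm{Td}(z)=\frac{ze^{-z}}{1-e^{-z}}$: for $z\geq 1$ one has $1-e^{-z}\geq 1-e^{-1}>0$, so $|\mathrm{Td}(z)|\leq C z e^{-z}$, and more generally every derivative $\mathrm{Td}^{(\beta)}(z)$ is, for $z\geq 1$, a finite sum of terms of the shape $(\text{polynomial in }z)\cdot e^{-mz}\cdot(1-e^{-z})^{-\ell}$, each of which decays faster than any inverse power of $z$. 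Hence for every $\alpha,\beta\in\mathbb{Z}_{\geq0}$, $z^{\alpha}\mathrm{Td}^{(\beta)}(z)\to 0$ as $z\to+\infty$, which is exactly the Schwartz condition in one variable on $[0,+\infty)$.

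Next I would pass to $n$ variables. The key point is that the function in the statement, $\mathrm{Td}:\mathbb{R}^n_{\geq0}\to\mathbb{R}$, must be interpreted as the product $\bep\mapsto\prod_{i=1}^n\mathrm{Td}(\epsilon_i)$ (this is the only sensible reading, since $\mathrm{Td}$ was defined as a function of one complex variable and the statement restricts to $\mathbb{R}^n_{\geq0}$). Since partial derivatives of such a product tensor factor, $\partial^{(\boldsymbol\beta)}\bigl(\prod_i\mathrm{Td}(\epsilon_i)\bigr)=\prod_i\mathrm{Td}^{(\beta_i)}(\epsilon_i)$, and similarly $\bep^{\boldsymbol\alpha}=\prod_i\epsilon_i^{\alpha_i}$, the multivariable limit $\lim_{\bep\to\infty,\ \bep\in\mathbb{R}^n_{\geq0}}\bep^{\boldsymbol\alpha}\partial^{(\boldsymbol\beta)}\mathrm{Td}(\bep)=0$ follows from the one-variable decay established above together with the observation that each factor $\epsilon_i^{\alpha_i}\mathrm{Td}^{(\beta_i)}(\epsilon_i)$ is bounded on $[0,+\infty)$ and tends to $0$ at infinity; since $\bep\to\infty$ in $\mathbb{R}^n_{\geq0}$ forces at least one coordinate $\epsilon_{i_0}\to\infty$ while the others stay in $[0,+\infty)$, the product of a factor going to $0$ with uniformly bounded remaining factors goes to $0$. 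Smoothness on $\mathbb{R}^n_{\geq0}$ is immediate from smoothness in each variable on $[0,+\infty)$.

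I do not expect a genuine obstacle here; the statement is essentially a bookkeeping exercise around the one fact that $\mathrm{Td}$ is smooth at $0$ and decays exponentially at $+\infty$. The only point requiring a little care is making precise that every derivative of $\mathrm{Td}$ on $[1,+\infty)$ is dominated by a finite combination of terms $z^k e^{-mz}(1-e^{-z})^{-\ell}$ with $m\geq1$; this can be done by induction on the order of the derivative using the quotient rule, noting that differentiating such a term produces only terms of the same form (the $-\ell$ exponent may increase by $1$, and an extra factor $e^{-z}$ appears, but $m\geq1$ is preserved). Given that, the exponential factor beats any power of $z$ and the Schwartz estimates follow. I would present the one-variable argument as the main body of the proof and dispatch the tensor-product step in a sentence.
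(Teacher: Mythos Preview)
Your one-variable argument is correct and is essentially the same as the paper's: smoothness at $0$ via the cancellation of the simple zero/pole (the paper phrases this as $\mathrm{Td}$ being meromorphic with simple poles only at $2\pi ik$, $k\neq 0$), and decay at infinity via the representation $\mathrm{Td}(x)=\frac{xe^{-x}}{1-e^{-x}}$ together with an induction showing that every derivative $\mathrm{Td}^{(\beta)}$ is a finite $\R$-linear combination of terms $\frac{x^pe^{-qx}}{(1-e^{-x})^r}$ with $q\geq 1$. The paper writes out this induction explicitly by differentiating a generic such term; your sketch is the same computation.

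One correction regarding the second half of your proposal: the ``$\R^n_{\geq0}$'' in the statement is a typo for $\R_{\geq0}$. The paper's own proof is purely one-variable (it works with $x\in\R$, derivatives $\mathrm{Td}^{(\beta)}(x)$, and $\lim_{x\to\infty}$), and every later use of the lemma is of the form $\mathrm{Td}\circ C$ with $C$ a linear form on $\R^n$, never as a tensor product $\prod_i\mathrm{Td}(\epsilon_i)$. So your multivariable paragraph is unnecessary; you can drop it and present only the one-variable argument.
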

\begin{proof}
Notice first that $\frac{1}{e^z-1}$ only has poles when $z=2\pi i k$ where $k\in\Z$, and moreover these poles are simple. Therefore ${\rm Td}$ is a meromorphic function with simple poles at $z=2\pi i k$ where $k\in\Z\setminus\{0\}$. In particular it is smooth for every $x\in\R$ and all its derivatives are bounded on the closed interval $[0,R]$ for any $R>0$. We are only left to prove that $\lim_{x\to\infty}x^\alpha{\rm Td}^{(\beta)}(x)=0$ where $\alpha$ and $\beta$ are non negative integers. 
We see first that, for $x\in\R$ \begin{equation}\label{eq:DerivativesTodd}\lim_{x\to\infty}\frac{x^pe^{-qx}}{(1-e^{-x})^r}=0\ \ \ \forall(p,r)\in\Z^2_{\geq0}\ {\rm and}\ q\geq1,\end{equation}
which follows from $\lim_{x\to\infty}\frac{1}{1-e^{-x}}=1$, and $x\mapsto e^{-qx}$ being a Schwartz function on $\R_{\geq0}$. We show now that for every $\beta\in\Z_{\geq0}$, $|{\rm Td}^{(\beta)}|$ can be expressed as a linear combination with real coefficients of fractions like the one on the left hand side of \eqref{eq:DerivativesTodd}. Indeed, it is true for $\beta=0$ since ${\rm Td}(x)=\frac{xe^{-x}}{1-e^{-x}}$. Moreover \[\frac{d}{dx}\left(\frac{x^pe^{-qx}}{(1-e^{-x})^r}\right)=\frac{px^{p-1}e^{-qx}}{(1-e^{-x})^r}-\frac{qx^pe^{-qx}}{(1-e^{-x})^r}-\frac{rx^pe^{-(q+1)x}}{(1-e^{-x})^{r+1}}\]which yields the result by induction.
\end{proof}

Using the column representation of  the matrices described in Definition \ref{defn:Ashintani}, consider the set $\mathcal C_n$ of column vectors with $n$ non negative arguments, and with at least one of them positive. Recall that, with some abuse of notation, for $C\in\mathcal C_n$ we denote also by $C$ the linear form defined by $C(\bep)=\lan C,\bep\ran$.

\begin{defn-prop}\label{defn-prop:SumandIntCodomain}The following map defined on $\mathcal C_n$ takes values in $\Cbi(\R^n_+)$.
\begin{equation}
\mathfrak S: C\mapsto\left(\R^n_+\ni\boldsymbol\epsilon\mapsto \S(C)(\bep):=\sum_{m=1}^\infty e^{-m C(\bep)}=\frac{e^{-C(\bep)}}{1-e^{-C(\bep)}}\right).\label{eq:SumColumns}
\end{equation}
We extend it linearly to $\R\mathcal C_n$ which is the real vector space freely generated by $\mathcal C_n$. (Notice that the sum in $\R\mathcal C_n$ is NOT the usual sum of vectors. For the latter $\mathfrak S$ is not a linear map.) 
\end{defn-prop}

\begin{proof}
 Notice that $C\, \S(C)=\frac{C}{e^{C}-1}={\rm Td}\circ C $ where ${\rm Td}$ is the Todd function. Lemma \ref{lem:ToddisSchwartzinR} yields the result. 
\end{proof}

We identify matrices with the tensor product of its columns. Namely, we identify $A=\{a_{i,j}\}_{1\leq i\leq n, 1\leq j\leq r}\in\Sig$ with $C_1\otimes\cdots\otimes C_r$, where $C_j=\{a_{i,j}\}_{1\leq i\leq n}$ is the $j$-th column of $A$. In order to extend the definition of $\S$ for matrices, we recall the definition and universal property of the tensor algebra. The tensor algebra of a $\R$-vector space $V$ is defined as the direct sum of the non-negative integer tensor powers of $V$, namely \begin{equation}\label{eq:tensoralgebragrading}\T(V):=\bigoplus_{k\geq0}V^{\otimes k},\end{equation}
where $V^{\otimes 0}=\R$. The product on $\T(V)$ is the concatenation of vectors, which will be denoted by $\otimes$. Notice that the concatenation product respects the grading in \eqref{eq:tensoralgebragrading}, making it a graded algebra. The unit is the inclusion $u:\R\to V^{\otimes0}\subset\T(V)$. 

We recall the universal property of the tensor algebra:

Let $V$ be a $\R$-vector space, $A$ be an $\R$-algebra and $f:V\to A$ a linear map. There exists a unique algebra morphism $\phi_f:\T(V)\to A$ such that the following diagram commutes, where $\iota$ is the natural inclusion map from $V$ to $\T(V)$.
\begin{center}\label{fig:UnivPptyTensorAlg}
\begin{tikzcd}
V\arrow{r}{\iota}\arrow{dr}{f}&\T(V)\arrow{d}{\phi_f}\\
&A
\end{tikzcd}
\captionof{figure}{Universal property of tensor algebra.}
\end{center}

By means of the universal property of the tensor algebra (Figure \ref{fig:UnivPptyTensorAlg}), $\S$ defined in equation \eqref{eq:SumColumns} extends uniquely to an algebra morphism from the tensor algebra $\mathcal T(\R \mathcal C_n)$ to $\Cbi(\R^n_+)$ (See Proposition \ref{prop:Cb-MS-algebras}). 
Namely, for $C_1\otimes\cdots\otimes C_r$

\[\S(C_1\otimes\cdots\otimes C_r)(\bep)=\prod_{j=1}^r\S(C_j)(\bep)=\sum_{\boldsymbol m\in\Z^r_+}e^{-\lan\boldsymbol m,\boldsymbol C(\bep)\ran},\]where $\boldsymbol C(\bep)=(C_1(\bep),\cdots,C_r(\bep))$.

\begin{prop}
Let $A\in\Sig$, and using the identification a matrix with the tensor product of its columns, 

\begin{equation}\label{eq:sigA}
\bep\mapsto\S(A)(\bep):=\S(C_1\otimes\cdots\otimes C_r)(\bep)=\sum_{\boldsymbol m\in\Z^r_>0}e^{-\lan A\boldsymbol m,\bep\ran}
\end{equation}
lies in $\MS(\R^n_+)$.
\end{prop}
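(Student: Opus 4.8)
The plan is to exhibit an explicit polynomial $P$ such that $P\cdot\S(A)$ extends to a Schwartz function on $\R^n_{\geq 0}$. The natural candidate comes from the factorization already observed in Definition-Proposition \ref{defn-prop:SumandIntCodomain}: since $C_j\cdot\S(C_j)=\Td\circ C_j$ is Schwartz on $\R^n_{\geq 0}$ by Lemma \ref{lem:ToddisSchwartzinR}, multiplying $\S(A)=\prod_{j=1}^r\S(C_j)$ by $C_J:=\prod_{j=1}^r C_j$ (where each $C_j$ is read as the linear form $\bep\mapsto\lan C_j,\bep\ran$) gives $C_J\cdot\S(A)=\prod_{j=1}^r\bigl(C_j\cdot\S(C_j)\bigr)=\prod_{j=1}^r(\Td\circ C_j)$, a product of $r$ functions each lying in $\Sc(\R^n_{\geq 0})$.

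First I would record that $C_J$ is a genuine polynomial in $n$ real variables (a product of linear forms). Then I would invoke Proposition \ref{prop:Cb-MS-algebras}, which states that $\Sc(\R^n_{\geq 0})$ is a (non-unital) algebra under pointwise product; since each factor $\Td\circ C_j$ is in $\Sc(\R^n_{\geq 0})$, so is the finite product $\prod_{j=1}^r(\Td\circ C_j)$. Hence the polynomial $p:=C_J$ witnesses that $\S(A)\in\MS(\R^n_+)$ in the sense of Definition \ref{defn:MSRplus}.

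There is one small subtlety to address: the identity $C_J\cdot\S(A)=\prod_{j=1}^r(\Td\circ C_j)$ holds pointwise on $\R^n_+$, where all the $\S(C_j)(\bep)$ are finite and the manipulation is legitimate (the series $\sum_{m\geq 1}e^{-mC_j(\bep)}$ converges since $C_j(\bep)>0$ for $\bep\in\R^n_+$, using that $C_j\in\CC_n$ has a positive entry and $\bep$ has all positive entries). What matters for membership in $\MS(\R^n_+)$ is precisely that this pointwise product, a priori only defined on $\R^n_+$, extends smoothly and with Schwartz decay to the closed orthant $\R^n_{\geq 0}$ — and that is exactly what the right-hand side $\prod_j(\Td\circ C_j)$ provides, since $\Td$ is smooth on all of $\R$ (its poles are at $2\pi i k$, $k\neq 0$, off the real axis). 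So the extension is automatic and no boundary analysis near the faces of the orthant is needed.

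I do not expect any real obstacle here: the statement is essentially a corollary of Definition-Proposition \ref{defn-prop:SumandIntCodomain} together with the algebra structure of $\Sc(\R^n_{\geq 0})$. The only point requiring a line of care is justifying the factorization $\S(A)=\prod_{j=1}^r\S(C_j)$ — but this is exactly how $\S$ was extended to the tensor algebra via the universal property immediately before the proposition, so it may simply be cited. If one wants to be fully explicit about the Schwartz estimates, one notes that a product of Schwartz functions on $\R^n_{\geq 0}$ is Schwartz because $\bep^{\valp}\partial^{(\boldsymbol\beta)}$ of a product expands by the Leibniz rule into a finite sum of terms each bounded by a product of ($\bep$-weighted) derivatives of the individual factors, each of which tends to $0$ at infinity; but this is precisely the content of Proposition \ref{prop:Cb-MS-algebras} and need not be repeated.
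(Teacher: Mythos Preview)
There is a genuine gap. Your central claim that ``each factor $\Td\circ C_j$ is in $\Sc(\R^n_{\geq 0})$'' is false in general. Lemma \ref{lem:ToddisSchwartzinR} says only that $\Td$ is Schwartz on the \emph{one-dimensional} half-line $\R_{\geq 0}$. Composing with the linear form $C_j:\R^n_{\geq 0}\to\R_{\geq 0}$ does \emph{not} yield a Schwartz function on $\R^n_{\geq 0}$ unless every entry of $C_j$ is strictly positive. For a concrete counterexample, take $n=2$ and $C_j=(1,0)^T$: then $(\Td\circ C_j)(\epsilon_1,\epsilon_2)=\Td(\epsilon_1)$, which equals the nonzero constant $\Td(1)$ along the ray $(1,t)$ as $t\to\infty$. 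So the individual factors are only in $\Cbi(\R^n_+)$ (this is exactly what Definition--Proposition \ref{defn-prop:SumandIntCodomain} asserts, not more), and you cannot invoke the algebra structure of $\Sc(\R^n_{\geq 0})$ on them.

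What rescues the argument---and what the paper's proof uses---is the \emph{row} condition in the definition of $\Sig$: every row of $A$ has a positive entry. This guarantees that whenever $\bep\to\infty$ in $\R^n_{\geq 0}$, at least one linear form $C_{j'}(\bep)\to\infty$, so at least one factor $\Td(C_{j'}(\bep))$ decays rapidly while the remaining factors stay bounded together with all their derivatives (they lie in $\Cbi$). The product of one rapidly decaying factor with bounded-smooth factors is then Schwartz. Your write-up never invokes the row condition, and without it the conclusion fails (e.g.\ for the $1\times 2$ ``matrix'' with columns $C_1=(1,0)^T$, $C_2=(1,0)^T$---which is excluded from $\Sig$ precisely because row $2$ is zero---the product $\Td(\epsilon_1)^2$ is not Schwartz on $\R^2_{\geq 0}$).
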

\begin{proof}

Notice that for every column $C_j$ of $A$, $C_j(\bep)\S(C_j)(\bep)={\rm Td}(C_j(\bep))$. Since $A\in \Sig$ there is at least one non-zero element in each row, and therefore if $\bep\rightarrow\infty$, there is at least one $j'\in[r]$ such that $C_{ j'}\rightarrow\infty$. By means of Lemma \ref{lem:ToddisSchwartzinR}  $\bep\mapsto C_j(\bep)\S(C_j)(\bep)={\rm Td}(C_{j'}(\bep))$ lies in $\Sc(\R^n_{\geq0})$. Definition-Proposition \ref{defn-prop:SumandIntCodomain} implies that $\bep\mapsto C_j(\bep)\S(C_j(\bep))$ is smooth and bounded together with all its derivatives. Thus \[\Big(\prod_{j\in[r]}C_j\Big)\S(A)=\prod_{j\in[r]}C_j\S(C_j)\in\Sc(\R^n_{\geq0})\]
which yields the result.
\end{proof}

Recall that a cone $\{\R_{\geq0}a_1+\cdots+\R_{\geq0}a_m:\forall i\in[c]\, a_i\in\R^n\}\subset \R^n$ is called smooth if $m=n$ and $\{a_i\}_{i\in[n]}$ is a basis of $\Z^n$. Consider a matrix $A\in\Sigma_{n\times n}(\R_{\geq0})$ which also belongs to ${\rm SL}_n(\Z)$, then the rows of $A$ are the edges of a smooth cone $\mathcal C$ on the lattice $\Z^n_{\geq0}$, namely the columns of $A$ correspond to the vectors $a_i$ previously mentioned. In that case $\S(A)$ amounts to the discrete Laplace transform of the characteristic function of $\mathcal C\cap \Z^n$. We refer the reader to $\cite{GPZ1}$ for a more complete treatment of smooth cones and conical zeta values.

\section{The Mellin transform of classes of rapidly decreasing functions}\label{sec:Mellin}

The Mellin transform will play a central role on the sequel, since it is the main tool we use to determine meromorphic continuations. In this section we study the domain of convergence and meromorphic continuation of the Mellin transform of some specific functions. We also recall some results of \cite{NP} relative to Mellin transforms of rational functions and extend them to Mellin transforms of rational functions damped by a Schwartz function.

\begin{defn}
Let $g:\R^n_+\to\R$ be a measurable function. The Mellin tranform of $g$ is defined as \[\C^n\ni \bs\mapsto \M_{g}(\bs):=\int_{\R^n_+}\bep^{\bs-1}g(\bep)d\bep,\]
where $\bep^{\bs-1}=\epsilon_1^{s_1-1}\cdots\epsilon_n^{s_n-1}$.
\end{defn}

\begin{rk}\label{rk:Mellin_analytic}
The subset of $\C^n$ for which the above integral is defined depends on the function $g$. However as a consequence of Morera's theorem \cite{Mo}, if the integral is convergent on an open set $\mathcal{O}\subset\C^n$ it is also holomorphic on $\mathcal{O}$ (See for instance, \cite[Theorem 5.4]{SS}).
\end{rk}

\subsection{Domain of convergence of the Mellin transform of rapidly decreasing functions}

We introduce a class of functions, whose behavior at infinity ensures a non empty domain of convergence of its Mellin transform. 

\begin{defn}
We call a function $\phi:\R^n_{\geq0}\to\R$ rapidly decreasing if for every $\alpha\in\Z^n$, \[\lim_{\bep\to\infty}\bep^\alpha \phi(\bep)=0.\]
\end{defn}

Notice that this definition differs from that of Schwartz functions, in that here it is not required the function to be smooth and its derivatives to be rapidly decreasing. For the rest of this section we adopt the notation $\C^n_+:=\{\bs\in\C^n:\Re(\bs)>0\}$.
\begin{prop}\label{prop:ExtendedDomainofMellin}
Let $g:\R^n_+\to\C$ be a function, the Mellin transform of which 
\begin{equation*}
\M_g(\bs)=\int_{\R^n_+}\bep^{\bs-1}g(\bep)d\bep
\end{equation*}
is absolutely convergent for values of $\bs$ in some non-empty set $D_g\subset\C^n$. For any given bounded, measurable function  $\phi:\R^n_{\geq0}\to\R$ which is rapidly decreasing, the Mellin transform of $g\phi$ given by
\begin{equation*}
\M_{g\phi}(\bs)=\int_{\R^n_+}\bep^{\bs-1}g(\bep)\phi(\bep)d\bep
\end{equation*}
is absolutely convergent on the set $D_g+\C_+^n$ where the sum is understood as the usual sum of subsets of a vector space (Minkowski sum). It is moreover analytic on the interior of $D_g+\C_+^n$.
\end{prop}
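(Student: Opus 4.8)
The plan is to prove the two assertions—absolute convergence on $D_g+\C_+^n$ and analyticity on its interior—essentially by a direct estimate followed by an appeal to Morera's theorem (Remark \ref{rk:Mellin_analytic}).

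First I would set up the estimate. Fix $\bs\in D_g+\C_+^n$, so that we may write $\bs=\bs_0+\bs_1$ with $\bs_0\in D_g$ and $\Re(\bs_1)>0$ componentwise. The key observation is the pointwise bound
\[
|\bep^{\bs-1}g(\bep)\phi(\bep)|=|\bep^{\bs_0-1}g(\bep)|\cdot|\bep^{\bs_1}\phi(\bep)|,
\]
where I have used that $|\bep^{\bs}|=\bep^{\Re(\bs)}$ on $\R_+^n$ and split the monomial as $\bep^{\bs-1}=\bep^{\bs_0-1}\bep^{\bs_1}$. Now $\phi$ is bounded, say by $M$, and rapidly decreasing; and $\Re(\bs_1)>0$, so $\bep\mapsto\bep^{\Re(\bs_1)}$ grows only polynomially. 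Hence the function $\bep\mapsto\bep^{\Re(\bs_1)}|\phi(\bep)|$ is bounded on $\R^n_{\geq0}$: near the origin it is controlled by $M$ times a quantity tending to $0$ (since each component of $\Re(\bs_1)$ is positive), and near infinity it tends to $0$ because $\phi$ is rapidly decreasing and $\bep^{\Re(\bs_1)}$ is a monomial. Call this bound $K=K(\bs_1)$. Then
\[
\int_{\R^n_+}|\bep^{\bs-1}g(\bep)\phi(\bep)|\,d\bep\leq K\int_{\R^n_+}|\bep^{\bs_0-1}g(\bep)|\,d\bep=K\int_{\R^n_+}\bep^{\Re(\bs_0)-1}|g(\bep)|\,d\bep<\infty,
\]
the finiteness of the last integral being exactly the hypothesis that $\M_g$ converges absolutely at $\bs_0\in D_g$. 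This establishes absolute convergence of $\M_{g\phi}$ at every point of $D_g+\C_+^n$.

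For analyticity, I would invoke Remark \ref{rk:Mellin_analytic}: the Mellin transform of any measurable function is holomorphic on any open set on which it converges absolutely (this is the cited consequence of Morera's theorem). Since we have just shown $\M_{g\phi}$ converges absolutely on all of $D_g+\C_+^n$, it is in particular holomorphic on the interior of this set, which is the second claim. One small point worth recording is that $D_g+\C_+^n$ has nonempty interior whenever $D_g$ is nonempty, since $\C_+^n$ is open, so adding it to any nonempty set produces an open set; thus the statement is not vacuous.

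I do not expect a genuine obstacle here; the only thing requiring a moment's care is the claim that $\bep\mapsto\bep^{\Re(\bs_1)}|\phi(\bep)|$ is globally bounded on $\R^n_{\geq0}$ when $\Re(\bs_1)>0$. The subtlety is that rapid decrease only controls behavior at infinity, so one must separately check boundedness on a neighborhood of the boundary of $\R^n_{\geq0}$; this is where positivity of each component of $\Re(\bs_1)$ (rather than mere nonvanishing) is used, together with boundedness of $\phi$. Compactness of the region between a small polydisc around the origin and a large box then handles the intermediate regime. Everything else is bookkeeping with the splitting $\bs=\bs_0+\bs_1$ and the multiplicativity of $\bep\mapsto\bep^{\bs}$.
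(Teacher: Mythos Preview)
Your proof is correct and follows essentially the same approach as the paper: both split $\bs=\bs_0+\bs_1$ with $\bs_0\in D_g$ and $\Re(\bs_1)>0$, establish that $\bep\mapsto\bep^{\Re(\bs_1)}|\phi(\bep)|$ is globally bounded on $\R^n_{\geq0}$ (the paper does this via an explicit ball/complement split, you via the same near-zero/compact-intermediate/at-infinity reasoning), and reduce to the assumed absolute convergence at $\bs_0$; analyticity is then drawn from Remark~\ref{rk:Mellin_analytic} in both cases.
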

\begin{proof}
Let $\bs$ in $D_{g}+\C_+^n$, then there is a $\bs_0\in D_g$ and $\boldsymbol \alpha\in\C_+^n$ such that $\bs=\bs_0+\boldsymbol \alpha$. Let $\bar B_R(0)$ be a closed ball of $\R^n_{\geq0}$ centered at 0 with radius $R>0$ and set $\bsigma:=\Re(\bs)$, $\bsigma_0:=\Re(\bs_0)$, and $\boldsymbol a=\Re(\boldsymbol \alpha)$, then formally
\begin{equation}\label{eq:splittingMgphi}
|\M_{g\phi}(\bs)|\leq\int_{\bar B_R(0)}\bep^{\bsigma_0+\boldsymbol a-1}|g(\bep)\phi(\bep)|d\bep+\int_{\R^n_+\setminus \bar B_R(0)}\bep^{\bsigma_0+\boldsymbol a-1}|g(\bep)\phi(\bep)|d\bep.
\end{equation}
It is therefore enough to prove the convergence of the two integrals on the right hand side of \eqref{eq:splittingMgphi} to prove that $\M_{g\phi}(\bs)$ is convergent.
The facts that $\phi$ is bounded, $\boldsymbol a$ lies in $\R_+^n$, and $\bar B_R(0)$ is compact, imply that there is a $M_a>0$ such that for every $\bep\in \bar B_R(0)$ the inequality $\bep^{\boldsymbol a}|\phi(\bep)|<M_a$ holds, and therefore 
\[\int_{\bar B_R(0)}\bep^{\bsigma_0+\boldsymbol a-1}|g(\bep)\phi(\bep)|d\bep\leq M_a\int_{\bar B_R(0)}\bep^{\bsigma_0-1}|g(\bep)|d\bep\leq M_a\int_{\R^n_+}\bep^{\bsigma_0-1}|g(\bep)|d\bep\]
which is convergent by assumption.

For the second integral on the right hand side of \eqref{eq:splittingMgphi}, since $\phi$ is rapidly decreasing, for $R$ big enough $\bep^{\boldsymbol a}|\phi(\bep)|<1$ for every $\bep\in\R^n_+\setminus \bar B_R(0)$ and thus
\[\int_{\R^n_+\setminus \bar B_R(0)}\bep^{\bsigma_0+\boldsymbol a-1}|g(\bep)\phi(\bep)|d\bep\leq\int_{\R^n_+\setminus \bar B_R(0)}\bep^{\bsigma_0-1}|g(\bep)|d\bep\leq \int_{\R^n_+}\bep^{\bsigma_0-1}|g(\bep)|d\bep,\]which is again convergent by assumption. Therefore both integrals on the right hand side of \eqref{eq:splittingMgphi} are convergent implying the expected result. The analiticity follows from Remark \ref{rk:Mellin_analytic}. 
\end{proof}

\begin{rk}\label{rk:MellinSchwartz}Notice that the case $g(\bep)=1$ is not covered by the former proposition since $M_1(\bs)$ does not converge for any $\bs\in\C^n$. However it is easy to see that for $\phi$ bounded, measurable and rapidly decreasing, $M_\phi(\bs)$ converges and is analytic for $\Re(\bs)>1$, by this we mean $\Re(s_i)>1$ for every $i\in[n]$. Indeed
\begin{equation}\label{eq:rkmellinschw}
\M_\phi(\bs)=\int_{B_1(0)\cap\R_+^n}\bep^{\bs-1}\phi(\bep)d\bep+\int_{\R_+^n\setminus B_1(0)}\bep^{\bs-1}\phi(\bep)d\bep,\end{equation}
for $\Re(\bs)>1$ the first integral on \eqref{eq:rkmellinschw} converges since the integrand is bounded and $B_1(0)$ is compact. For the second integral it is enough to realize that $|\bep^{\bs-1}\phi(\bep)|\leq C\, \frac{1}{|\bep|^{n+1}}$ for some $C>0$.
 \end{rk}

The following Definition and Theorem were stated and proved by Nilsson and Passare \cite{NP}, and will be of use in the sequel.

\begin{defn}\label{defn:truncated}
\phantom{ }
\begin{itemize}
\item Let $p$ be a complex Laurent polynomial on $n$ variables and $\Gamma$ a face of its Newton polytope $\Delta_p$. The {\bf truncated polynomial} $p_\Gamma$ associated to the facet $\Gamma$ is the sum of the monomials of $p$ whose exponents lie in $\Gamma$.
\item We say a polynomial is {\bf completely non-vanishing} on a subset $\mathcal{O}\subset \C^n$ if neither it, nor its truncated polynomials, vanish in $\mathcal{O}$.
\end{itemize}
\end{defn}

\begin{thm}\label{thm:thm1np}\cite{NP}
If a polynomial $p$ is completely non-vanishing on $\R^n_+$, then the integral \begin{equation*}\label{eq:mellintrratfuncnp}\M_{1/p}(\bs)=\int_{\R^n_+}\frac{\bep^{\bs-1}}{p(\bep)}d\bep
\end{equation*}
converges absolutely and defines an analytic function $\bs\mapsto \M_{1/p}(\bs)$ on the tube domain \[D_{1/p}:=\{\bs\in\C^n:\Re(\bs)=\bsigma\in {\rm int}(\Delta_p)\}.\]
\end{thm}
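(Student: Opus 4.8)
The plan is to reduce the convergence of $\M_{1/p}(\bs)$ on the tube $D_{1/p}$ to a statement about the polyhedral geometry of $\Delta_p$ together with a quantitative bound on $|p(\bep)|$ from below. First I would fix $\bsigma \in \mathrm{int}(\Delta_p)$ and observe that since $\Delta_p$ is a polytope described as in \eqref{eq:Newtonpolytope}, the condition $\bsigma \in \mathrm{int}(\Delta_p)$ is equivalent to $\lan \bmu_k, \bsigma \ran > \nu_k$ for every facet $k$. I would then decompose $\R^n_+$ into finitely many regions according to which coordinates are large and which are small, most naturally by passing to logarithmic coordinates $\epsilon_i = e^{t_i}$ (so $d\bep/\bep = dt$) and partitioning $\R^n$ into the cones dual to the normal fan of $\Delta_p$; on each such region a single monomial $\bep^{\boldsymbol\gamma}$ of $p$ — one whose exponent $\boldsymbol\gamma$ lies on the relevant face — dominates in absolute value.

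The key analytic input is the lower bound: because $p$ is completely non-vanishing on $\R^n_+$, neither $p$ nor any truncation $p_\Gamma$ has a zero there, and a compactness argument on the ``sphere'' $\{|\bep| = 1\}$ intersected with each closed region (after normalising out the dominant monomial) yields a constant $c>0$ with $|p(\bep)| \geq c\, \bep^{\boldsymbol\gamma}$ on that region, where $\boldsymbol\gamma$ is the dominant exponent for that region. One must be slightly careful here since $\R^n_+$ is not compact and the regions are unbounded cones; the standard device is to work in the $t$-coordinates, where each region is a closed cone, intersect with a unit sphere around the origin, use that $p$ normalised by the dominant monomial extends continuously and non-vanishingly (this is exactly where the truncated polynomials $p_\Gamma$ enter — they are the limits of the normalised $p$ as one goes to infinity along directions in the cone), and conclude by a continuity/compactness argument on the one-point-compactified cone.

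Granting the lower bound, on each region the integrand is dominated by $\bep^{\bsigma - 1 - \boldsymbol\gamma}$, i.e.\ $e^{\lan \bsigma - \boldsymbol\gamma, t\ran}$ in log coordinates, and the integral over that cone converges precisely when $\lan \bsigma - \boldsymbol\gamma, t \ran \to -\infty$ fast enough along the cone, which holds because $\boldsymbol\gamma$ is a vertex (or a point of the face) maximising $\lan \cdot, v\ran$ over $\Delta_p$ for $v$ in the interior of that dual cone, combined with $\bsigma$ being in the \emph{open} polytope. Summing the finitely many regional bounds gives absolute convergence of $\M_{1/p}(\bs)$ for all $\bs$ with $\Re(\bs) = \bsigma \in \mathrm{int}(\Delta_p)$, and analyticity on the tube then follows from Remark \ref{rk:Mellin_analytic}.

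\textbf{Main obstacle.} I expect the crux to be the uniform lower bound $|p(\bep)| \geq c\,\bep^{\boldsymbol\gamma}$ on each unbounded region: making the compactification of the log-coordinate cones precise, identifying the boundary limits of the normalised polynomial with exactly the truncated polynomials $p_\Gamma$ (so that the hypothesis ``completely non-vanishing'' is what rules out the limit being zero), and handling lower-dimensional faces where several monomials survive simultaneously. Everything after that — the monomial domination and the resulting convergence of an exponential integral over a cone — is routine. (Since this theorem is quoted from \cite{NP}, I would in practice cite their argument, but the above is how I would reconstruct it.)
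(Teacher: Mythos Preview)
The paper does not give its own proof of this theorem: it is simply quoted from \cite{NP}, as you yourself note at the end of your proposal. So there is nothing to compare against directly.

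That said, your reconstruction is faithful to the Nilsson--Passare argument, and in fact the paper reproduces a close variant of it in the proof of Theorem~\ref{cor:mellinprodschwratz} (the extension to $\phi/p$ with $\phi$ Schwartz, handling in particular the case $\mathrm{int}(\Delta_p)=\emptyset$). There the author performs exactly the steps you outline: the logarithmic change of variable $\epsilon_i=e^{x_i}$; a cover of $\R^n$ (minus a compact set) by closed cones $C_\Gamma$ attached to the faces $\Gamma$ of the relevant polytope; on each cone, the splitting $p=p_\Gamma+q_\Gamma$ into the truncated polynomial and the remainder; and then separate bounds showing that the normalised $p_\Gamma$ is bounded away from zero (this is where ``completely non-vanishing'' enters) while the normalised $q_\Gamma$ tends to zero. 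Your identification of the main obstacle --- making precise the compactness argument that turns non-vanishing of the $p_\Gamma$ into a uniform lower bound on $|p|$ along each unbounded cone --- is accurate, and the paper's proof of Theorem~\ref{cor:mellinprodschwratz} handles it by shrinking each $\widetilde C_\Gamma$ slightly to a closed cone $C_\Gamma$ in its interior so that the relevant minima over the unit sphere are strictly positive.
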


We need an extension of this theorem whose proof follows very closely the one in \cite{NP}.

\begin{thm}\label{cor:mellinprodschwratz}
Let $\phi:\R^n_{\geq 0}\to\R$ be a measurable, bounded, rapidly decreasing function, and $p$ a polynomial completely non-vanishing on $\R^n_+$, then the Mellin transform of $\phi/p$ 
\begin{equation*}\label{eq:mellintrratfunc}\M_{\phi/p}(\bs)=\int_{\R^n_+}\frac{\bep^{\bs-1}\phi(\bep)}{p(\bep)}\, d\bep,\end{equation*}
converges absolutely and defines an analytic function  $\bs\mapsto \M_{\phi/p}(\bs)$ on the tube domain $\Re(\bs)=\bsigma\in \Delta_{p}+\R^n_+$.
\end{thm}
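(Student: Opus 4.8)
The plan is to combine the two enlargement results that precede the statement: Theorem~\ref{thm:thm1np} of Nilsson--Passare, which handles $\M_{1/p}$ on the open tube over $\mathrm{int}(\Delta_p)$, and Proposition~\ref{prop:ExtendedDomainofMellin}, which says that multiplying by a bounded, measurable, rapidly decreasing $\phi$ enlarges any domain of absolute convergence by the Minkowski sum with $\C^n_+$. First I would set $g := 1/p$. By Theorem~\ref{thm:thm1np}, $\M_{1/p}$ converges absolutely on $D_{1/p} = \{\bs : \Re(\bs) \in \mathrm{int}(\Delta_p)\}$, which is a non-empty set (since $p$ is completely non-vanishing on $\R^n_+$, $\Delta_p$ is of full dimension), so the hypothesis of Proposition~\ref{prop:ExtendedDomainofMellin} is met. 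Applying that proposition with this $g$ and the given $\phi$ yields that $\M_{\phi/p}$ converges absolutely on $D_{1/p} + \C^n_+$ and is analytic on its interior. It remains to identify this set: I claim
\[
\{\bsigma \in \R^n : \bsigma \in \mathrm{int}(\Delta_p)\} + \R^n_+ \;=\; \Delta_p + \R^n_+,
\]
at the level of real parts (and then $D_{1/p} + \C^n_+$ is the tube over the right-hand side).

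To prove this set identity, the inclusion $\subseteq$ is immediate since $\mathrm{int}(\Delta_p) \subset \Delta_p$. For $\supseteq$, take $\bsigma = \boldsymbol{x} + \boldsymbol{y}$ with $\boldsymbol{x} \in \Delta_p$ and $\boldsymbol{y} \in \R^n_+$. Pick any point $\boldsymbol{v} \in \mathrm{int}(\Delta_p)$; since $\Delta_p$ is convex with non-empty interior, for sufficiently small $t \in (0,1)$ the point $\boldsymbol{x}_t := (1-t)\boldsymbol{x} + t\boldsymbol{v}$ lies in $\mathrm{int}(\Delta_p)$, and $\boldsymbol{x} = \boldsymbol{x}_t + t(\boldsymbol{x} - \boldsymbol{v})$. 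Choosing $t$ small enough that $\boldsymbol{y} + t(\boldsymbol{x} - \boldsymbol{v}) \in \R^n_+$ (possible since $\boldsymbol{y}$ has strictly positive coordinates and $t(\boldsymbol{x}-\boldsymbol{v})$ can be made arbitrarily small), we get $\bsigma = \boldsymbol{x}_t + \big(\boldsymbol{y} + t(\boldsymbol{x}-\boldsymbol{v})\big) \in \mathrm{int}(\Delta_p) + \R^n_+$. This proves equality. Moreover $\Delta_p + \R^n_+$ is open (being a union of translates of the open set $\R^n_+$), so its interior is itself, and the analyticity statement of Proposition~\ref{prop:ExtendedDomainofMellin} gives analyticity on the whole tube $\Re(\bs) \in \Delta_p + \R^n_+$.

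I expect the only genuine subtlety to be the bookkeeping around the two integrals in the splitting of $\M_{\phi/p}$ near the boundary of the tube --- but this is precisely what Proposition~\ref{prop:ExtendedDomainofMellin} already encapsulates, so once $g = 1/p$ is plugged in there is nothing further to do there. The main conceptual point, and the one worth spelling out, is the set-theoretic identity $\mathrm{int}(\Delta_p) + \R^n_+ = \Delta_p + \R^n_+$, which is what lets us phrase the conclusion cleanly in terms of the polyhedron $\Delta_p + \R^n_+$ rather than the open tube produced mechanically by the Minkowski sum. A minor remark: one should note explicitly that $1/p$ is measurable on $\R^n_+$ (it is, being continuous there since $p$ does not vanish on $\R^n_+$), so that $\phi/p$ is measurable and Proposition~\ref{prop:ExtendedDomainofMellin} genuinely applies. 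The statement follows.
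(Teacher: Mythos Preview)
Your argument is correct only under the additional hypothesis that $\Delta_p$ has non-empty interior, and your justification for this hypothesis is wrong. You write that ``since $p$ is completely non-vanishing on $\R^n_+$, $\Delta_p$ is of full dimension'', but this implication fails: take $p(\epsilon_1,\epsilon_2)=\epsilon_1+\epsilon_2$. This polynomial is completely non-vanishing on $\R^2_+$ (it and its truncations $\epsilon_1$, $\epsilon_2$ are all strictly positive there), yet $\Delta_p$ is the segment from $(1,0)$ to $(0,1)$, which has empty interior. In that case $D_{1/p}=\emptyset$, Theorem~\ref{thm:thm1np} says nothing, and Proposition~\ref{prop:ExtendedDomainofMellin} cannot be invoked because its hypothesis requires a non-empty $D_g$. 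Your set identity $\mathrm{int}(\Delta_p)+\R^n_+=\Delta_p+\R^n_+$ also breaks down here, since the left-hand side is empty while the right-hand side is not.

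This is not a corner case for the paper: the polynomials $C_J(\bep)=\prod_{j\in J}\langle C_j,\bep\rangle$ to which the theorem is later applied are products of linear forms, and their Newton polytopes $\Delta_{C_J}$ are Minkowski sums of simplices of dimension at most $n-1$, so they routinely have empty interior. The paper's proof therefore treats the two cases separately: when $\mathrm{int}(\Delta_p)\neq\emptyset$ it proceeds exactly as you do (citing Theorem~\ref{thm:thm1np} and Proposition~\ref{prop:ExtendedDomainofMellin}); when $\mathrm{int}(\Delta_p)=\emptyset$ it instead adapts the Nilsson--Passare proof of Theorem~\ref{thm:thm1np} directly, by induction on $n$, constructing a covering of $\R^n$ by cones $C_\Gamma$ associated to faces of $\Delta_p+\R^n_+$ and to the point $\bsigma$, and establishing on each cone the decay estimate $e^{\langle\bsigma,\bx\rangle}|\phi(e^{\bx})|/|p(e^{\bx})|\leq\psi(\bx)$ for some rapidly decreasing $\psi$. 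Your proposal is missing this entire second half.
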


\begin{proof}
Whenever $\Delta_p$ has no empty interior, the statement is a direct consequence of Theorem \ref{thm:thm1np} and Proposition \ref{prop:ExtendedDomainofMellin}. For the case ${\rm int}(\Delta_p)=\emptyset$, we adapt the proof of \cite[Theorem 1]{NP}. 

Consider the change of variable $\epsilon_i\mapsto e^{x_i}$, then \[\M_{\phi/p}(s)=\int_{\R^n}\frac{e^{\lan \bs,\bx\ran}\phi(e^\bx)}{p(e^{\bx})}d\bx.\]
Similar to Remark \ref{rk:MellinSchwartz}, it is then enough to prove that there is a bounded, measurable rapidly decreasing function $\psi$ such that \begin{equation}\label{eq:boundedintegrand}
\frac{e^{\lan\bsigma,\bx\ran}|\phi(e^\bx)|}{|p(e^\bx)|}\leq \psi(\bx)
\end{equation}
for every $\bx\in\R^n\setminus K$ where $K$ is a compact set. We prove this by induction over the dimension $n$. For $n=1$, since ${\rm dim}(\Delta_p)=0$ then $p(e^x)=a_\alpha e^{\alpha x}$ where $\Delta_p=\alpha\in\Z$. Let $M>0$ be such that $|\phi(e^x)|<M$ for every $x$. For negative values of $x$  
\[\frac{e^{\sigma x}|\phi(e^x)|}{|a_\alpha e^{\alpha x}|}\leq\frac{M}{|a_\alpha|}e^{-(\sigma-\alpha)|x|}.\]Since $e^{-(\sigma-\alpha)|x|}$ is bounded, measurable and rapidly decreasing for $x\leq 0$, this yields the result. On the other hand, $|e^{(\sigma-\alpha)x}\phi(e^x)|$ is rapidly decreasing for $x>0$, thus our claim is true for $n=1$.

For the inductive step assume that inequality in \eqref{eq:boundedintegrand} holds for dimensions smaller than $n$, and consider a polynomial in $n$ variables $p(\bep)=\sum_{\alpha\in\mathcal{A}}a_\alpha \bep^\alpha$, where $\mathcal A\subset\Z^n$, and with ${\rm dim}(\Delta_p)<n$. Let $\bsigma\in\Delta_p+\R^n_+$, then in particular $\bsigma\notin\Delta_p$ (because $0\notin \R_+$). We build a family of cones, the union of which covers $\R^n\setminus K$ where $K$ is a compact set. Recall that for a set $X\subset\R^n$, ${\rm conv}(X)$ refers to the convex hull generated by the elements in $X$. 
\begin{itemize}
\item For every face $\Gamma$ of $\Delta_p+\R^n_+$, choose $\bsigma_\Gamma\in{\rm int}(\Gamma)$. Define $\Delta_\Gamma={\rm conv}((\mathcal A\setminus\Gamma)\cup\{\bsigma_\Gamma,\bsigma\})$. Consider then the cone \[\widetilde C_\Gamma:=\{\bx\in\R^n:\,(\forall\boldsymbol\xi\in\Delta_\Gamma)\, \lan\boldsymbol\xi-\bsigma_\Gamma,\bx-\bsigma_\Gamma\ran\leq0\}.\]
\item Consider the polytope $\Delta_{\bsigma} :={\rm conv}(\mathcal A\cup\{\bsigma\}).$ Then define the cone \[\widetilde C_{\bsigma} :=\{\bx\in\R^n:\,(\forall\boldsymbol\xi\in\Delta_{\bsigma})\, \lan\boldsymbol\xi-\bsigma,\bx-\bsigma\ran\leq0\}.\]
\end{itemize}
Notice that $\R^n\setminus \left(\bigcup_{\Gamma}\widetilde C_\Gamma\cup \widetilde C_{\bsigma}\right)$ is a bounded set, where $\Gamma$ takes values on the set of faces of $\Delta_p+\R^n_+$. Even more, consider for every $\Gamma$ (resp. for $\bsigma$) a slightly smaller closed, convex cone  $C_\Gamma$ (resp. $C_{\bsigma}$) contained in the interior of $\widetilde C_\Gamma$ (resp. $\widetilde C_{\bsigma}$) with vertex in $\bsigma_\Gamma$ (resp. $\bsigma$) such that $\R^n\setminus \left(\bigcup_{\Gamma} C_\Gamma\cup  C_{\bsigma}\right)$ is still a bounded set. It is then enough to prove the estimate \eqref{eq:boundedintegrand} for every $\bx$ in $C_\Gamma$ and for every $\bx$ in  $C_{\bsigma}$ with norm $|\bx|$ chosen sufficiently large. 

Fix a face $\Gamma$ of $\Delta_p+\R^n$ and consider $\bx\in\C_\Gamma$. With a slight abuse of notation we call $p_\Gamma$  the sum of the monomials of $p$, the exponents of which lie in $\Gamma$. Notice that this differs from the truncated polynomials in Definition \ref{defn:truncated} in that here $\Gamma$ is not necessarily a face of $\Delta_p$ but rather one of $\Delta_p+\R^n_+$. Let $q_\Gamma=p-p_\Gamma$. Then
\begin{equation}\label{eq:splitthm1NP}
\frac{e^{\lan\bsigma,\bx}\ran\phi(e^\bx)}{p(e^\bx)}=\frac{e^{\lan\bsigma-\bsigma_\Gamma,\bx\ran}}{(\phi(e^\bx))^{-1}e^{-\lan\bsigma_\Gamma,\bx\ran}p_\Gamma(e^\bx)+(\phi(e^\bx))^{-1}e^{-\lan\bsigma_\Gamma,\bx\ran}q_\Gamma(e^\bx)}.
\end{equation}
Let us now determine adequate bounds for the numerator and for each of the sumands in the denominator of the fraction on the right hand side of \eqref{eq:splitthm1NP}.
\begin{itemize}
\item Bound for $e^{\lan\bsigma-\bsigma_\Gamma,\bx\ran}$: Set $\by:=\bx-\bsigma_\Gamma$ and $k:={\rm min}\{\langle\bsigma_\Gamma-\bsigma,\by\ran\, : |\by|=1,\, \bsigma_\Gamma+\by=\bx\in C_\Gamma\}$. By construction of $C_\Gamma$ we have $k>0$. Then \begin{align*}
e^{\langle\bsigma_\Gamma-\bsigma,\bx\rangle}&=e^{\lan\bsigma_\Gamma-\bsigma,\bsigma_\Gamma\ran}e^{\lan\bsigma_\Gamma-\bsigma,\frac{\by}{|\by|}\ran|\by|}\\
&\geq e^{\lan\bsigma_\Gamma-\bsigma,\bsigma_\Gamma\ran}e^{k|\by|}\\
&\geq e^{\lan\bsigma_\Gamma-\bsigma,\bsigma_\Gamma\ran}e^{k(|\bx|-|\bsigma_\Gamma|)}\\
&\geq c_1e^{k|\bx|}
\end{align*}
where we have set $c_1:=e^{\lan\bsigma_\Gamma-\bsigma,\bsigma_\Gamma\ran}e^{-k|\bsigma_\Gamma|}>0$. Setting $\psi(\bx)=(c_1e^{k|\bx|})^{-1}$ implies that $|e^{\lan\bsigma-\bsigma_\Gamma,\bx\ran}|<\psi(x)$ where $\psi$ is a bounded, measurable, rapidly decreasing function.
\end{itemize}
We now find a constant $c>0$ such that the denominator of \eqref{eq:splitthm1NP} $(\phi(e^\bx))^{-1}e^{-\lan\bsigma_\Gamma,\bx\ran}p_\Gamma(e^\bx)\ +\ (\phi(e^\bx))^{-1}e^{-\lan\bsigma_\Gamma,\bx\ran}q_\Gamma(e^\bx))>c$ for $|\bx|\in C_\Gamma$ large enough.
\begin{itemize}
\item Bound for $(\phi(e^\bx))^{-1}e^{-\lan\bsigma_\Gamma,\bx\ran}p_\Gamma(e^\bx)$: Notice that ${\rm dim}(\Delta_{p_\Gamma})=m<n$. Therefore, a change of coordinates, the transformation matrix of which has determinant 1 (in dimension 2 and 3 is equivalent to a rotation of coordinates) can be made, such that in the new coordinates $\bx'$ the polytope $\Delta_{p_\Gamma}$ is in a (possibly affine) subspace parallel to the span of the first $m$ coordinates $x'_1,\cdots,x'_m$, and the remaining $n-m$ coordinates are orthogonal to $\Delta_{p_\Gamma}$.  We write $\bx'=(\bx'_1,\bx'_2)$ where $\bx'_1$ are the first $m$ coordinates  and $\bx'_2$ are the last $n-m$ coordinates, we write similarly $\bsigma_\Gamma=(\bsigma_{\Gamma1},\bsigma_{\Gamma2})$. Then $p_\Gamma(e^{\bx'})=e^{\lan \sigma_{\Gamma2},\bx'_2\ran}p_{\Gamma}(e^{\bx'_1})$. Indeed, since all the exponents of the polynomial $p_\Gamma$ (seen as vectors on $\Z^n$) lie in $\Delta_{p_\Gamma}$, then their last $n-m$ components in the coordinates $\bx'$ are equal to $\bsigma_{\Gamma_2}$, because $\Delta_{p_\Gamma}$ is constant in those components. It follows that

\[|(\phi(e^\bx))^{-1}e^{-\lan\bsigma_\Gamma,\bx\ran}p_\Gamma(e^\bx)|= |(\phi(e^{\bx}))^{-1}||e^{-\lan\bsigma_{\Gamma1},\bx'_1\ran}p_\Gamma(e^{\bx'_1})|.\]
Since the right hand side depends on $m<n$ variables, by means of the induction hypothesis, there is a constant $c_2>0$ such that $|(\phi(e^\bx))^{-1}e^{-\lan\bsigma_\Gamma,\bx\ran}p_\Gamma(e^\bx)|>c_2$ for $|\bx|$ large enough.   

\item Bound for $(\phi(e^\bx))^{-1}e^{-\lan\bsigma_\Gamma,\bx\ran}q_\Gamma(e^\bx)$: We now proceed to prove that for $|\bx|$ large enough $|\phi^{-1}(e^\bx)e^{-\lan\bsigma_\Gamma,\bx\ran}q_\Gamma(e^\bx)|<c_2/2$. Recall that \[q_\Gamma(e^{\bx})=\sum_{\alpha\in\mathcal A\setminus\Gamma}a_\alpha e^{\lan\alpha,\bx\ran}.\]
Since $\alpha\in\Delta_\Gamma$ and $C_\Gamma$ is closed, the following constant exist and is positive
\[k_\alpha:={\rm min}\{\lan\bsigma_\Gamma-\alpha,\by\ran:\, |\by|=1,\ \bx=\bsigma_\Gamma+\by\in C_\Gamma \}.\] 
Hence
\begin{align*}
|a_\alpha e^{\lan\alpha,\bx\ran}e^{-\lan\bsigma_\Gamma,\bx\ran}|&=|a_\alpha e^{\lan\alpha,\bsigma_\Gamma\ran-|\bsigma_\Gamma|^2}e^{-\lan\bsigma_\Gamma-\alpha,\frac{\by}{|\by|}\ran|\by|}|\\
&\leq  |a_\alpha e^{\lan\alpha,\bsigma_\Gamma\ran-|\bsigma_\Gamma|^2}e^{-k_\alpha|\by|}|.
\end{align*}
The last term tends to zero as $|\by|$ tends to infinity. Therefore, for $|\bx|$ large enough $|(\phi(e^\bx))^{-1}e^{-\lan\bsigma_\Gamma,\bx\ran}q_\Gamma(e^\bx)|<\frac{c_2}{2}$ as expected.
\end{itemize}

It follows that for $\bx\in C_\Gamma$ with $|\bx|$ large enough, the estimate \eqref{eq:boundedintegrand} is satisfied. We are only left to prove the same estimate for $\bx\in C_\bsigma$. For that purpose notice that for $\bx$ with large enough norm, $\bx\in C_\bsigma$ implies that $\bx\notin \R^n_{\leq0}$. It follows then that $e^\bx\mapsto\phi(e^{\bx})$ is rapidly decreasing for $\bx\in C_{\bsigma}$. On the other hand, since $p$ is a polynomial completely non-vanishing in $\R^n_+$, then $C_\bsigma\ni\bx\mapsto| p(e^\bx)|$ is bounded from below by a positive constant. Therefore \[C_\bsigma\ni\bs\mapsto\frac{e^{\lan\bsigma,\bx\ran}|\phi(e^\bx)|}{|p(e^\bx)|}\]
is a bounded, measurable rapidly decreasing function, which completes the proof.
\end{proof}

\subsection{Meromorphic continuation of the Mellin transform}

In this paragraph we build a meromorphic continuation for the Mellin transform of rational functions damped by a Schwartz function (See Definition \ref{defn:MSRplus}). For that purpose we first recall a result from \cite{NP} which we then adapt to our context.

\begin{thm}\cite[Theorem 2]{NP}
Let $p$ be a completely non-vanishing polynomial on the positive orthant $\R^n_+$ such that its Newton polytope $\Delta_p$ is of full dimension. Then the Mellin transform of $1/p$
\begin{equation*}\M_{1/p}(\bs):=\int_{\R^n_+}\frac{\bep^{\bs-1} }{p(\bep)}d\bep\end{equation*}
admits a meromorphic continuation of the form  \begin{equation*}\M_{1/p}(\bs)=\Phi(\bs)\prod_{k=i}^{N}\Gamma(\lan\boldsymbol\mu_k,\bs\ran-\nu_k),\end{equation*}
where $\Phi$ is an entire function, and $N$, $\boldsymbol\mu_k$ and $\nu_k$ are as in \eqref{eq:Newtonpolytope}.
\end{thm}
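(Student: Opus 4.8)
The statement coincides with \cite[Theorem 2]{NP}, and I would reproduce the Nilsson--Passare argument; here is the plan. Since $1/\Gamma$ is entire, the function $\Phi(\bs):=\M_{1/p}(\bs)\prod_{k=1}^{N}\Gamma(\langle\bmu_k,\bs\rangle-\nu_k)^{-1}$ is, by Theorem \ref{thm:thm1np}, well defined and holomorphic on the tube $\{\Re(\bs)\in\mathrm{int}(\Delta_p)\}$. So it is enough to establish two facts: (i) $\M_{1/p}$ extends to a meromorphic function on all of $\C^n$ whose polar locus is contained in $\bigcup_{k=1}^{N}\bigcup_{l\in\Z_{\geq0}}\{\langle\bmu_k,\bs\rangle=\nu_k-l\}$; and (ii) at every point of $\C^n$ the pole order of $\M_{1/p}$ is at most that of $\prod_k\Gamma(\langle\bmu_k,\bs\rangle-\nu_k)$. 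Together these force $\Phi$ to be holomorphic at every point, hence entire, which is exactly the assertion.

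For (i) I would pass to logarithmic coordinates $\epsilon_i=e^{x_i}$, so that $\M_{1/p}(\bs)=\int_{\R^n}e^{\langle\bs,\bx\rangle}/p(e^{\bx})\,d\bx$ (the denominator never vanishes, as $e^{\bx}\in\R^n_+$ and $p$ is completely non-vanishing there). Split off $\int_{|\bx|\leq R}$, which converges for all $\bs$ and is entire by Morera, and cover the complement by the maximal cones of the inner normal fan of $\Delta_p$; the full-dimensionality of $\Delta_p$ is used here so that these maximal cones are pointed and the cone integrals below converge. The cone $\sigma_v$ attached to a vertex $v$ of $\Delta_p$ is the set of directions along which $e^{\langle v,\bx\rangle}$ dominates every other monomial of $p(e^{\bx})$; its ray generators are exactly the outer normals $-\bmu_k$ to the facets $\Gamma_k$ containing $v$. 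On $\sigma_v$ one writes $p(e^{\bx})=a_v e^{\langle v,\bx\rangle}\bigl(1+g_v(\bx)\bigr)$ with $g_v(\bx)=\sum_{\alpha\in\mathcal A\setminus\{v\}}(a_\alpha/a_v)e^{\langle\alpha-v,\bx\rangle}$, which decays exponentially as $|\bx|\to\infty$ inside the cone, and expands the geometric series to get $1/p(e^{\bx})=a_v^{-1}\sum_{\beta}c_\beta\,e^{\langle\beta-v,\bx\rangle}$, where $\beta$ ranges over the additive semigroup generated by $\{\alpha-v:\alpha\in\mathcal A\}$, so that $\langle\bmu_k,\beta\rangle\in\Z_{\geq0}$ whenever $v\in\Gamma_k$. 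Triangulating $\sigma_v$ into simplicial subcones without adding new rays and using $\int_{\R_{\geq0}w_1+\cdots+\R_{\geq0}w_n}e^{\langle\bmu,\bx\rangle}\,d\bx=|\det(w_1,\dots,w_n)|\prod_i(-\langle\bmu,w_i\rangle)^{-1}$ --- a rational function of $\bmu$ with poles exactly on $\bigcup_i\{\langle\bmu,w_i\rangle=0\}$ --- one integrates the series term by term: substituting $\bmu=\bs-v+\beta$, the contribution of each $\beta$ has poles only on the hyperplanes $\langle\bmu_k,\bs\rangle=\langle\bmu_k,v\rangle-\langle\bmu_k,\beta\rangle=\nu_k-l$ with $l=\langle\bmu_k,\beta\rangle\in\Z_{\geq0}$. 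Summing over $\beta$ and over the vertices $v$ --- the exponential decay of $g_v$ giving, via the estimates behind \cite{NP}, local uniform convergence away from those hyperplanes --- and adding the entire remainder yields the meromorphic continuation of $\M_{1/p}$ with the claimed polar locus.

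For (ii), in each simplicial-subcone term the rays $w_i$ are pairwise non-proportional facet normals, so such a term has at most a simple pole along each hyperplane and, at a given $\bs_0$, is singular only along those $\{\langle\bmu_k,\bs\rangle=\nu_k-l_k\}$ for which $\langle\bmu_k,\bs_0\rangle-\nu_k=-l_k\in\Z_{\leq0}$; hence its pole order at $\bs_0$ is at most $m(\bs_0):=\#\{k:\langle\bmu_k,\bs_0\rangle-\nu_k\in\Z_{\leq0}\}$, and a locally uniformly convergent sum of such functions cannot raise the order, so $\M_{1/p}$ has pole order at most $m(\bs_0)$ at $\bs_0$. Conversely, for fixed $k$ the hyperplanes $\{\langle\bmu_k,\bs\rangle=\nu_k-l\}$, $l\in\Z_{\geq0}$, are parallel, so at most one passes through $\bs_0$; thus $\prod_k\Gamma(\langle\bmu_k,\bs\rangle-\nu_k)$ has a pole at $\bs_0$ of order exactly $m(\bs_0)$, and its reciprocal a zero of order $m(\bs_0)$. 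Consequently $\Phi$ is holomorphic at $\bs_0$, and since $\bs_0$ was arbitrary, $\Phi$ is entire.

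The main obstacle is the analytic bookkeeping in step (i): the geometric series for $1/p(e^{\bx})$ converges only on the interior of each cone $\sigma_v$ and not uniformly up to its boundary, and the cones $\sigma_v$ tile $\R^n$ only up to their lower-dimensional common faces, so interchanging the summation with the integration, controlling the resulting double series in $\beta$ and $v$, and checking that no spurious poles off the hyperplanes $\{\langle\bmu_k,\bs\rangle=\nu_k-l\}$ survive all require care. This is precisely the content of the Nilsson--Passare estimates, and it is where essentially all the work lies; steps (i)'s combinatorial skeleton and step (ii) are then routine.
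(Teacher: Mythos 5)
Your overall skeleton (reduce to: (i) meromorphic continuation with polar locus contained in $\bigcup_{k,l}\{\lan\bmu_k,\bs\ran=\nu_k-l\}$, (ii) pole order bounded by that of $\prod_k\Gamma(\lan\bmu_k,\bs\ran-\nu_k)$, hence $\Phi$ entire) is sound, and the combinatorics of your normal-fan decomposition is correct: the rays of the normal cone at a vertex $v$ are the outer facet normals $-\bmu_k$ with $v\in\Gamma_k$, and $\lan\bmu_k,\beta\ran\in\Z_{\geq0}$ for $\beta$ in the semigroup generated by $\{\alpha-v\}$, which is exactly what places the poles on $\lan\bmu_k,\bs\ran=\nu_k-l$. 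But there is a genuine gap at the heart of step (i), and you have mislocated where it gets filled. The geometric series for $1/(1+g_v(\bx))$ converges only where $|g_v(\bx)|<1$, which is a proper subregion of the normal cone $\sigma_v$: on the walls between adjacent maximal cones two monomials of $p$ are comparable and $|g_v|$ is of order $1$ or larger (already for $p(\bep)=\epsilon_1+\epsilon_2$ one has $g_v\equiv 1$ on the wall). So you must shrink the cones, treat the transition slabs separately (where no single vertex dominates and the relevant face of $\Delta_p$ is positive-dimensional), and justify term-by-term integration of a series that is at best conditionally summable near the walls. You defer all of this to ``the Nilsson--Passare estimates,'' but those estimates do something else: they are lower bounds for $|p(e^{\bx})|$ on cones, used to establish the \emph{domain of convergence} (Theorem \ref{thm:thm1np} and its extension in this paper), not to control a series expansion of $1/p$. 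As written, the hardest step of your argument is outsourced to a reference that does not contain it.

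For comparison, the proof of \cite[Theorem 2]{NP} --- reproduced in this paper, with a Schwartz factor added, as Theorem \ref{thm:thm2np-phi} --- avoids any series expansion. It applies the dilation $\bep\mapsto\lambda^{\bmu_k}\bep$ to the convergent integral, differentiates at $\lambda=1$, and obtains the recursion $\M_{1/p}(\bs)=(\lan\bmu_k,\bs\ran-\nu_k)^{-1}\int \bep^{\bs}q_{e_k}(\bep)p(\bep)^{-2}\,d\bep/\bep$, where the Newton polytope of $q_{e_k}$ has been pushed off the facet $\Gamma_k$ by one lattice step; iterating extends the domain by $\Delta(\boldsymbol\nu-\boldsymbol m)$ at the cost of explicit simple factors $\prod_l(\lan\bmu_k,\bs\ran-\nu_k+l)$, which simultaneously yields (i) and (ii) with no convergence issues beyond Theorem \ref{thm:thm1np}. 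If you want to complete a proof along your cone-expansion lines you would need to supply the wall analysis yourself; otherwise the dilation recursion is the argument actually available in the literature you are citing.
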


We need a slightly more general form of the previous theorem, the proof of which follows closely the one in \cite{NP} with a small change to take into account a Schwartz function.

Similar to \eqref{eq:Newtonpolytope}, there are integers $\nu_k$, and vectors $\boldsymbol\mu_k$ which lie in $\Z_{\geq0}^n$ on the inward normal direction of the facets of $\Delta_p+\R_+^n$ with mutually coprime coordinates, such that \begin{equation}\label{eq:NewtonpoltoInfty}
\Delta_p+\R_+^n:=\bigcap_{k=1}^N\{\bsigma\in\R^n;\lan\boldsymbol\mu_k,\bsigma\ran\geq\nu_k\}.
\end{equation}
\begin{thm}\label{thm:thm2np-phi}
Let $p$ be a completely non-vanishing polynomial on the positive orthant $\R^n_+$ and $\phi\in\Sc(\R_{\geq0}^n)$. Then the Mellin transform 
\begin{equation}\label{eq:intthm2}\M_{\phi/p}(\bs)=\int_{\R^n_+}\frac{\bep^\bs \ \phi(\bep)}{p(\bep)}\frac{d\bep}{\bep}\end{equation}
admits a meromorphic continuation of the form  \begin{equation*}\label{eq:merextthm2}\M_{\phi/p}(\bs)=\Phi(\bs)\prod_{k=i}^{N}\Gamma(\lan\boldsymbol\mu_k,\bs\ran-\nu_k),\end{equation*}
where $\Phi$ is an entire function, and $N$, $\boldsymbol\mu_k$ and $\nu_k$ are as in \eqref{eq:NewtonpoltoInfty}.
\end{thm}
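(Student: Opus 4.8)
The plan is to follow the Nilsson--Passare strategy for \cite[Theorem 2]{NP}, inserting the Schwartz damping factor $\phi$ at the appropriate points and checking that it does not obstruct any of the estimates. First I would make the substitution $\epsilon_i\mapsto e^{x_i}$ to turn the Mellin integral \eqref{eq:intthm2} into an integral over $\R^n$ of $e^{\lan\bs,\bx\ran}\phi(e^\bx)/p(e^\bx)$. Then I would pick a base point $\bsigma_0\in{\rm int}(\Delta_p+\R^n_+)$ and decompose $\R^n$ into the family of closed cones $C_\Gamma$ (one for each facet $\Gamma$ of $\Delta_p+\R^n_+$) together with a central cone $C_{\bsigma_0}$, exactly as in the proof of Theorem \ref{cor:mellinprodschwratz}, so that their union is the complement of a compact set. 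On the central cone $C_{\bsigma_0}$, the function $\phi(e^\bx)$ is rapidly decreasing and $|p(e^\bx)|$ is bounded below (complete non-vanishing), so the corresponding piece of the integral is entire; this matches the argument already given at the end of the proof of Theorem \ref{cor:mellinprodschwratz}.

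The substance is in the facet cones $C_\Gamma$. For each such $\Gamma$, with inward normal $\boldsymbol\mu_k$ and offset $\nu_k$ as in \eqref{eq:NewtonpoltoInfty}, I would introduce the new variable $t_k=\lan\boldsymbol\mu_k,\bx\ran$ and split the remaining coordinates so that the integral over $C_\Gamma$ becomes, after restricting $t_k$ to $[0,\infty)$ (the non-compact direction of the cone) and the transverse variables to a bounded region, essentially $\int_0^\infty e^{(\lan\boldsymbol\mu_k,\bs\ran-\nu_k)t_k}\,h(t_k,\bs)\,dt_k$ where $h$ collects the transverse integral, the truncated polynomial $p_\Gamma$, the remainder $q_\Gamma=p-p_\Gamma$, and the factor $\phi(e^\bx)$. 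The key point, borrowed from \cite{NP}, is that $e^{-\lan\bsigma_0,\bx\ran}q_\Gamma(e^\bx)$ decays exponentially as one moves out along $C_\Gamma$ while $e^{-\lan\bsigma_0,\bx\ran}p_\Gamma(e^\bx)$ does not, so that $h(t_k,\bs)$ extends holomorphically in $\bs$ and is, together with all its $t_k$-derivatives, rapidly decreasing in $t_k$; the presence of $\phi$ only helps here, since $\phi(e^\bx)$ is itself Schwartz in the relevant directions (this is where Theorem \ref{cor:mellinprodschwratz} and its induction on dimension are invoked to control $(\phi(e^\bx))^{-1}e^{-\lan\bsigma_0,\bx\ran}p_\Gamma(e^\bx)$ from below). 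One then recognizes $\int_0^\infty e^{(\lan\boldsymbol\mu_k,\bs\ran-\nu_k)t_k}h(t_k,\bs)\,dt_k$, after repeated integration by parts, as $\Gamma(\lan\boldsymbol\mu_k,\bs\ran-\nu_k)$ times an entire function of $\bs$: the Gamma factor carries exactly the poles on the hyperplanes $\lan\boldsymbol\mu_k,\bs\ran=\nu_k-\ell$, $\ell\in\Z_{\geq0}$.

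Finally I would assemble the pieces. Summing over the facet cones and the central cone, $\M_{\phi/p}(\bs)$ is a finite sum of an entire function plus terms each of which is entire times a single $\Gamma(\lan\boldsymbol\mu_k,\bs\ran-\nu_k)$; multiplying and dividing by the full product $\prod_{k=1}^N\Gamma(\lan\boldsymbol\mu_k,\bs\ran-\nu_k)$ and using that the reciprocal $1/\Gamma$ is entire, one writes $\M_{\phi/p}(\bs)=\Phi(\bs)\prod_{k=1}^N\Gamma(\lan\boldsymbol\mu_k,\bs\ran-\nu_k)$ with $\Phi$ entire, which is the claimed form. (One should also note that the meromorphic function so obtained agrees with the original integral on the tube domain $\Re(\bs)\in\Delta_p+\R^n_+$ where Theorem \ref{cor:mellinprodschwratz} guarantees convergence, so it is genuinely the analytic continuation.)

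The main obstacle I expect is bookkeeping rather than conceptual: making the cone decomposition and the change of variables $t_k=\lan\boldsymbol\mu_k,\bx\ran$ simultaneously compatible for all facets, and verifying carefully that inserting $\phi$ preserves the holomorphy and rapid decay of the transverse integrand $h(t_k,\bs)$ uniformly on compact subsets of $\bs$-space. The delicate estimate is the lower bound on $|(\phi(e^\bx))^{-1}e^{-\lan\bsigma_0,\bx\ran}p_\Gamma(e^\bx)|$ along $C_\Gamma$, which is precisely the point handled by the dimensional induction in Theorem \ref{cor:mellinprodschwratz}; once that is in hand, the extraction of the Gamma factors via integration by parts is routine and follows \cite{NP} verbatim.
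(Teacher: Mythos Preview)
Your approach is genuinely different from the paper's, and there is a gap in it.

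The paper does \emph{not} proceed via a cone decomposition followed by integration by parts. Instead it uses a purely algebraic scaling/differentiation identity, following \cite{NP} closely. For each facet normal $\boldsymbol\mu_k$ one performs the dilation $(\epsilon_1,\dots,\epsilon_n)\mapsto(\lambda^{\mu_{k1}}\epsilon_1,\dots,\lambda^{\mu_{kn}}\epsilon_n)$ in the integral \eqref{eq:intthm2}, differentiates in $\lambda$ at $\lambda=1$, and obtains an identity of the form
\[
(\langle\boldsymbol\mu_k,\bs\rangle-\nu_k+m_k)\cdot(\text{old integral})=(\text{new integrals with better Newton polytope data}).
\]
Iterating this over all facets and all $\boldsymbol m\in\Z_{\geq0}^N$ produces the representation \eqref{eq:claimthmNP2}, whose numerator integrals are shown (via Theorem \ref{cor:mellinprodschwratz}) to converge on the enlarged domain $\Delta(\boldsymbol\nu-\boldsymbol m)$. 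The only modification relative to \cite{NP} is that differentiating also hits $\phi(\lambda^{\boldsymbol\mu_k}\bep)$, producing extra terms of the form $\epsilon_l\,\partial\phi/\partial\epsilon_l$; these are again Schwartz and contribute integrals with the same improved polytope data. The Gamma product then arises simply because the accumulated polar factors $\prod_j\prod_l(\langle\boldsymbol\mu_j,\bs\rangle-\nu_j+l)$ share their zero set with $\prod_k\Gamma(\langle\boldsymbol\mu_k,\bs\rangle-\nu_k)^{-1}$.

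Your plan has a concrete problem at the step where you claim that on each facet cone $C_\Gamma$ the integral becomes $\int_0^\infty e^{(\langle\boldsymbol\mu_k,\bs\rangle-\nu_k)t_k}h(t_k,\bs)\,dt_k$ with the transverse variables confined to a \emph{bounded} region. A cone in $\R^n$ has no bounded transverse slice: the cross-section at level $t_k$ grows linearly with $|t_k|$, so there is no natural one-variable reduction of this kind. Moreover, even granting such a reduction, integration by parts on an exponential integral $\int e^{\alpha t}h(t)\,dt$ produces only a pole at $\alpha=0$, not the full Gamma ladder at $\alpha\in\Z_{\leq 0}$; the Gamma poles arise from a genuine Mellin integral $\int_0^\infty t^{\alpha-1}h(t)\,dt$ with $h$ smooth at $t=0$, and your variable $t_k=\langle\boldsymbol\mu_k,\bx\rangle$ does not put the integral in that form. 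Finally, the assertion that this ``follows \cite{NP} verbatim'' is not accurate: \cite[Theorem 2]{NP} is proved by the scaling/differentiation trick above, not by cone decomposition and integration by parts. The cone decomposition in Theorem \ref{cor:mellinprodschwratz} is used only to establish the initial domain of convergence, not to extract poles.
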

\begin{proof}
This proofs closely follows that of Nilsson and Passare. We use here the following notation introduced by them: For a given $\boldsymbol\gamma\in\Z^N$
\[\Delta(\boldsymbol\gamma):=\bigcap_{k=1}^N\{\bsigma\in\R^n:\lan\boldsymbol\mu_k,\bsigma\ran\geq\gamma_k\},\]
in particular $\Delta(\boldsymbol\nu)=\Delta_p+\R_+^n$, where $\boldsymbol\nu$ is a vector, the components of which are the $\nu_k$ as in equation \eqref{eq:NewtonpoltoInfty}. 

We claim that for every $\boldsymbol m\in\Z_{\geq0}^N$, there are functions $\phi_{\boldsymbol m,i}\in\Sc(\R_{\geq0}^n)$ and polynomials $q_{\boldsymbol m,i}$ whose Newton polytopes satisfy $\Delta_{q_{\boldsymbol m,i}}\subset \Delta(|\boldsymbol m|\boldsymbol\nu+\boldsymbol m)$, such that 
\begin{equation}\label{eq:claimthmNP2}
\M_{\phi/p}(\bs)=\frac{1}{\prod_{j=1}^Nu_{\boldsymbol m,j}(\bs)}\left(\sum_{i=1}^{N_{\boldsymbol m}}\int_{\R^n_+}\frac{\boldsymbol\epsilon^\bs q_{\boldsymbol m,i}(\boldsymbol\epsilon)\ \phi_{\boldsymbol m,i}(\boldsymbol\epsilon)}{p(\boldsymbol\epsilon)^{1+|\boldsymbol m|}}\frac{d\boldsymbol\epsilon}{\boldsymbol\epsilon}\right)
\end{equation}
Here $N_{\boldsymbol m}\in\N$, $|\boldsymbol m|=m_1+\dots+m_N$, and $u_{\boldsymbol m,j}(\bs)=\prod_{l=0}^{m_j-1}(\lan\boldsymbol\mu_j,\bs\ran-\nu_j+l)$.
We prove this by induction over $|\boldsymbol m|$. Let $\boldsymbol m=e_k$: for $\lambda>1$, introducing the change of coordinates $(\epsilon_1,\dots ,\epsilon_n)\mapsto(\lambda^{\mu_{k1}}\epsilon_1,\dots,\lambda^{\mu_{kn}}\epsilon_n)$ in \eqref{eq:intthm2} we obtain
\begin{equation}\label{eq:bassestepcoordinatechange}
\M_{\phi/p}(\boldsymbol s)=\lambda^{\lan\boldsymbol \mu_k,\boldsymbol s\ran-\nu_k}\int_{\R^n_+}\frac{\boldsymbol \epsilon^{\boldsymbol s} }{\lambda^{-\nu_k}p(\lambda^{\boldsymbol \mu_k}\boldsymbol \epsilon)}\phi(\lambda^{\boldsymbol \mu_k}\boldsymbol \epsilon)\frac{d\boldsymbol \epsilon}{\boldsymbol \epsilon}.
\end{equation}

From differentiating \eqref{eq:bassestepcoordinatechange} with respect to $\lambda$ and evaluating at $\lambda=1$, it follows that
\begin{equation*}\label{eq:afterdiff}
0=(\lan\boldsymbol \mu_k,\boldsymbol s\ran-\nu_k)\M_{\phi/p}(\boldsymbol s)-\int_{\R^n_+}\frac{\boldsymbol \epsilon^{\boldsymbol s} q_{e_k}(\boldsymbol \epsilon)}{p^2(\boldsymbol \epsilon)}\phi(\boldsymbol \epsilon)\frac{d\boldsymbol \epsilon}{\boldsymbol \epsilon}+\int_{\R^n_+}\frac{\boldsymbol \epsilon^{\boldsymbol s} }{p(\boldsymbol \epsilon)}\left(\sum_{l=1}^n\epsilon_l\mu_{k_l}\frac{\partial\phi}{\partial{\epsilon_l}}\Big|_{\boldsymbol \epsilon}\right)\frac{d\boldsymbol \epsilon}{\boldsymbol \epsilon},
\end{equation*}
where we have set 
\[q_{e_k}(\bep)=\frac{d}{d\lambda}\left(\lambda^{-\nu_k}p(\lambda^{\boldsymbol \mu_k}\bep)\right)\Big|_{\lambda=1}.\]

That implies\begin{equation*}
\M_{\phi/p}(\boldsymbol s)=\frac{1}{\lan\boldsymbol \mu_k,\boldsymbol s\ran-\nu_k}\left(\int_{\R^n_+}\frac{\boldsymbol \epsilon^{\boldsymbol s} q_{e_k}(\boldsymbol \epsilon)}{p^2(\boldsymbol \epsilon)}\phi(\boldsymbol \epsilon)\frac{d\boldsymbol \epsilon}{\boldsymbol \epsilon}-\sum_{l=1}^n\mu_{k_l}\int_{\R^n_+}\frac{\boldsymbol \epsilon^{\boldsymbol s} p(\boldsymbol \epsilon)\epsilon_l}{p^2(\boldsymbol \epsilon)}\frac{\partial\phi}{\partial{\epsilon_l}}\Big|_{\boldsymbol \epsilon}\frac{d\boldsymbol \epsilon}{\boldsymbol \epsilon}\right).
\end{equation*}

Since $\frac{\partial\phi}{\partial{\boldsymbol \epsilon_l}}\in\Sc(\R_{\geq0}^n)$, we are only left to prove $\Delta_{q_{e_k}}\subset\Delta(\boldsymbol \nu+e_k)$ and $\Delta_{p\, \epsilon_l}\subset\Delta(\boldsymbol \nu+e_k)$. For the first inclusion, let $\Gamma_{e_k}$ be the facet of $\Delta_p+\R_+^n$ contained on the hyperplane $\lan\boldsymbol \mu_k,\bsigma\ran=\nu_k$. Notice that $q_{e_k}$ contains only the monomials from $p$ whose exponents do not lie on the facet $\Gamma_{e_k}$, therefore $\Delta_{q_{e_k}}\subset\Delta(\boldsymbol \nu+e_k)$. We prove the second inclusion, namely $\Delta_{p\, \epsilon_l}\subset\Delta(\boldsymbol \nu+e_k)$: recall that the Newton polytope of the product of two polynomials is equal to the sum of the Newton polytopes of each polynomial, thus $\Delta_{p\, \epsilon_l}$ is $\Delta_{p}$ translated by the vector $e_l$. Then if $\mu_{k_l}\neq0$, $\bsigma\in\Delta_{p\,\epsilon_l}$ implies that for every $j\in[N]$, $\lan\bsigma-e_l,\boldsymbol \mu_j\ran\geq\nu_j$ or equivalently $\lan\bsigma,\boldsymbol \mu_j\ran\geq\nu_j+\mu_{j_l}\geq \nu_j+\delta_{j,k}$ where the last inequality is a consequence of $\boldsymbol \mu_k\in\Z_{\geq0}^n$ and $\mu_{k_l}\neq0$. This proves that $\Delta_{p(\epsilon)\epsilon_i}\subset\Delta(\boldsymbol \nu+e_k)$ and our claim for the case $|\boldsymbol m|=1$.

For the inductive step assume that \eqref{eq:claimthmNP2} holds for a vector $\boldsymbol m$. We prove that it also holds for $\boldsymbol m':=\boldsymbol m+e_k$. Consider on each of the integrals on the right hand side of \eqref{eq:claimthmNP2} the change of variables $(\epsilon_1,\dots ,\epsilon_n)\mapsto(\lambda^{\mu_{k1}}\epsilon_1,\dots,\lambda^{\mu_{kn}}\epsilon_n)$. From differentiating with respect to $\lambda$ and making $\lambda =1$, it follows that
\begin{equation*}\footnotesize{
0=(\lan\boldsymbol \mu_k,\boldsymbol s\ran-\nu_k+m_k)\int_{\R^n_+}\frac{\boldsymbol \epsilon^{\bs} q_{\boldsymbol m,i}(\boldsymbol \epsilon)\ \phi_{\boldsymbol m,i}(\boldsymbol \epsilon)}{p(\boldsymbol \epsilon)^{1+|\boldsymbol m|}}\frac{d\boldsymbol \epsilon}{\boldsymbol \epsilon}-\int_{\R_+^n}\frac{\boldsymbol \epsilon^\bs q_{\boldsymbol m',i}(\boldsymbol \epsilon)\phi_{\boldsymbol m,i}(\boldsymbol \epsilon)}{p(\boldsymbol \epsilon)^{2+|\boldsymbol m|}}\frac{d\boldsymbol \epsilon}{\boldsymbol \epsilon}+\int_{\R^n_+}\frac{\boldsymbol \epsilon^\bs q_{\boldsymbol m,i}(\boldsymbol \epsilon)}{p(\boldsymbol \epsilon)^{1+|\boldsymbol m|}}\left(\sum_{l=1}^n\epsilon_l \mu_{k_l}\frac{\partial\phi_{\boldsymbol m,i}}{\partial{\epsilon_p}}\Big|_{\boldsymbol \epsilon}\right)\frac{d\boldsymbol \epsilon}{\boldsymbol \epsilon}},
\end{equation*}
implying \begin{equation*}\footnotesize{
\int_{\R^n_+}\frac{\boldsymbol \epsilon^\bs q_{\boldsymbol m,i}(\boldsymbol \epsilon)\ \phi_{\boldsymbol m,i}(\boldsymbol \epsilon)}{p(\boldsymbol \epsilon)^{1+|\boldsymbol m|}}\frac{d\boldsymbol \epsilon}{\boldsymbol \epsilon}=\frac{1}{\lan\boldsymbol \mu_k,\bs\ran-\nu_k+m_k}\left(\int_{\R_+^n}\frac{\boldsymbol \epsilon^\bs q_{\boldsymbol m',i}(\boldsymbol \epsilon)\phi_{\boldsymbol m,i}(\boldsymbol \epsilon)}{p(\boldsymbol \epsilon)^{2+|\boldsymbol m|}}\frac{d\boldsymbol \epsilon}{\boldsymbol \epsilon}-\sum_{l=1}^n\mu_{k_l}\int_{\R^n_+}\frac{\boldsymbol \epsilon^\bs q_{\boldsymbol m,i}(\boldsymbol \epsilon)p(\boldsymbol \epsilon)\epsilon_l}{p(\boldsymbol \epsilon)^{2+|\boldsymbol m|}}\frac{\partial\phi_{\boldsymbol m,i}}{\partial{\epsilon_l}}\Big|_{\boldsymbol \epsilon}\frac{d\boldsymbol \epsilon}{\boldsymbol \epsilon}\right)}
\end{equation*}
where $q_{\boldsymbol m',i}(\boldsymbol \epsilon)=(1+|\boldsymbol m|)q_{e_k}(\boldsymbol \epsilon)q_{\boldsymbol m,i}(\boldsymbol \epsilon)-p(\boldsymbol \epsilon)\tilde{q}_{\boldsymbol m,i}(\boldsymbol \epsilon)$, and \[\tilde{q}_{\boldsymbol m, i}(\boldsymbol \epsilon)=\frac{d}{d\lambda}\left(\lambda^{-|\boldsymbol m|\nu_k-m_k}q_{\boldsymbol m,i}(\lambda^{\boldsymbol \mu_k}\boldsymbol \epsilon)\right)\Big|_{\lambda=1}.\]

To prove our claim, we are only left to show that $\Delta_{q_{\boldsymbol m',i}}\subset\Delta(|\boldsymbol m'|\boldsymbol \nu+\boldsymbol m')$ and $\Delta_{q_{\boldsymbol m,i} p\, \epsilon_l}\subset\Delta(|\boldsymbol m'|\boldsymbol \nu+\boldsymbol m')$. For the first inclusion, notice that $\Delta_{q_{e_k}q_{\boldsymbol m,i}}\subset\Delta(\boldsymbol \nu+e_k)+\Delta(|\boldsymbol m|\boldsymbol \nu+\boldsymbol m)\subset\Delta(|\boldsymbol m'|\boldsymbol \nu+\boldsymbol m')$ where we have use the general inclusion $\Delta(\boldsymbol \alpha)+\Delta(\boldsymbol \beta)\subset\Delta(\boldsymbol \alpha+\boldsymbol \beta)$. Moreover, since any of the monomials in the polynomial $\tilde q_{\boldsymbol m,i}$ has exponents on the hyperplane $\lan\boldsymbol \mu_k,\bsigma\ran=|\boldsymbol m|\nu_k+m_k$, it follows that $\Delta_{p\,\tilde q_{\boldsymbol m,i}}\subset\Delta(\boldsymbol \nu)+\Delta(|\boldsymbol m|\boldsymbol \nu+\boldsymbol m+e_k)\subset\Delta(|\boldsymbol m'|\boldsymbol \nu+\boldsymbol m')$. This yields the first inclusion. For the second inclusion, recall that if $\mu_{k_l}\neq0$, $\Delta_{p\, \epsilon_l}\subset\Delta(\boldsymbol \nu+e_k)$, then \[\Delta_{q_{\boldsymbol m,i} p\, \epsilon_l}=\Delta_{q_{\boldsymbol m,i}}+\Delta_{p\, \epsilon_l}\subset\Delta(|\boldsymbol m|\boldsymbol \nu+\boldsymbol m)+\Delta(\boldsymbol \nu+e_k)\Delta(|\boldsymbol m'|\boldsymbol \nu+\boldsymbol m')\]
proving the second inclusion. Doing the same procedure for each of the integrals on the right hand side of \eqref{eq:claimthmNP2} we obtain 

\begin{equation*}
\M_{\phi/p}(\bs)=\frac{1}{\prod_{j=1}^Nu_{\boldsymbol m',j}(\bs)}\left(\sum_{i=1}^{N_{\boldsymbol m'}}\int_{\R^n_+}\frac{\boldsymbol \epsilon^\bs q_{\boldsymbol m',i}(\boldsymbol \epsilon)\ \phi_{\boldsymbol m',i}(\boldsymbol \epsilon)}{p(\epsilon)^{1+|\boldsymbol m'|}}\frac{d\boldsymbol \epsilon}{\boldsymbol \epsilon}\right),
\end{equation*}
as expected.

We prove now that each of the domains of convergence of each of integrals on the right hand side of \eqref{eq:claimthmNP2} contains $\Delta(\boldsymbol \nu-\boldsymbol m)$. Fix $ i$ in $[ N_{\boldsymbol m}]$, by means of Theorem \ref{cor:mellinprodschwratz}, the $i$-th integral in \eqref{eq:claimthmNP2} converges on \begin{equation}\label{eq:intersectionofDomains}
\bigcap_{\boldsymbol \tau\in\Delta_{q_{\boldsymbol m,i}}}\left((1+|\boldsymbol m|)\Delta(\boldsymbol \nu)-\boldsymbol \tau\right).
\end{equation}
We show then that $\Delta(\boldsymbol \nu-\boldsymbol m)$ is a subset of \eqref{eq:intersectionofDomains}. Indeed, if $\bsigma\in\Delta(\boldsymbol \nu-\boldsymbol m)$, then for every $j$ in $[ N]$
\[\lan\bsigma,\boldsymbol \mu_j\ran\geq\nu_j-m_j,\]
moreover, the inclusion $\Delta_{q_{\boldsymbol m,i}}\subset \Delta(|\boldsymbol m|\boldsymbol \nu+\boldsymbol m)$ yields \[\lan\boldsymbol \tau,\boldsymbol \mu_j\ran\geq|\boldsymbol m|\nu_j+m_j,\ \ \ \forall \tau\in\Delta_{q_{\boldsymbol m,i}}.\]
Both inequalities imply that
\begin{equation*}
\lan\bsigma+\boldsymbol \tau,\boldsymbol \mu_j\ran\geq(1+|\boldsymbol m|)\nu_j,
\end{equation*}
and thus $\bsigma$ lies on the intersection \eqref{eq:intersectionofDomains}.

Inside the domain $\{\bs\in\C^n: \boldsymbol \sigma \in\Delta(\boldsymbol \nu-\boldsymbol m)+\R^n_+\}$ the only poles of $\M_{\phi/p}$ are given by $u_{\boldsymbol m,j}(\bs)=0$, and are simple, which coincide with the poles of the product $\prod_{k}\Gamma(\lan\bs,\boldsymbol \mu_k\ran-\nu_k)$. Therefore, by the removable singularities Theorem, $\Phi(\boldsymbol s)=\M_{\phi/p}(\boldsymbol s)/(\prod_{k}\Gamma(\lan\bs,\boldsymbol \mu_k\ran-\nu_k))$ is an entire function which yields the expected result.
\end{proof}

\section{Polar structure of Shintani zeta functions}

In this paragraph we prove the main result announced in the introduction which gives a precise description of the polar loci of the Shintani zeta functions $\zeta_A$ associated to a matrix $A$, as in Definition \ref{defn:Ashintani}. For this purpose, we bring together the results of Sections \ref{sec:Aclassof} and \ref{sec:Mellin}. We recall that for a matrix $A\in\Sig$ with columns are $C_1,\dots,C_r$, $\S(A)=\S(C_1\otimes\cdots\otimes C_r)$ (See \eqref{eq:sigA}). We also recall that for a given column vector $C_j$ we also denote, with some abuse of notation, by $C_j$ the linear form defined by $C_j(\bep):=\lan\bep,C_j\ran$.

\begin{prop}\label{prop:Shintani_abs_conv_domain}
Let $A\in\Sig$, then for $\bs$ in the domain of convergence of $\zeta_A$ \[\zeta_A(\bs)\Gamma(\bs)=\M_{\S(A)}(\bs).\]
Moreover, for every $A\in\Sig$, $\zeta_A$ is absolutely convergent whenever $\M_{\S(A)}(\bs)$  is absolutely convergent.
\end{prop}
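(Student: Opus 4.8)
The plan is to use the integral representation of the Gamma function to turn the product $\zeta_A(\bs)\Gamma(\bs)$ into the Mellin transform of $\S(A)$, and then deal with the convergence issue by monotone convergence. Recall that for each $i\in[n]$ we have $\Gamma(s_i)=\int_0^\infty t_i^{s_i-1}e^{-t_i}\,dt_i$ for $\Re(s_i)>0$, and applying the substitution $t_i=m\cdot C_i(\bep)$ —more precisely, for a fixed $\boldsymbol m\in\Z_{>0}^r$, substituting $t_i=\lan A\boldsymbol m,\bep\ran_i = (A\boldsymbol m)_i$ times the appropriate scalar— one gets, for a single summand of $\zeta_A$,
\[
\bigl((A\boldsymbol m)_1\bigr)^{-s_1}\cdots\bigl((A\boldsymbol m)_n\bigr)^{-s_n}\,\Gamma(s_1)\cdots\Gamma(s_n)=\int_{\R^n_+}\bep^{\bs-1}e^{-\lan A\boldsymbol m,\bep\ran}\,d\bep,
\]
using the change of variables $\epsilon_i\mapsto \epsilon_i/(A\boldsymbol m)_i$, valid because each $(A\boldsymbol m)_i>0$ (every row of $A$ has a positive entry and $\boldsymbol m$ has positive entries). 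Summing over $\boldsymbol m\in\Z_{>0}^r$ and recalling from \eqref{eq:sigA} that $\S(A)(\bep)=\sum_{\boldsymbol m\in\Z^r_{>0}}e^{-\lan A\boldsymbol m,\bep\ran}$ gives, at least formally,
\[
\zeta_A(\bs)\,\Gamma(\bs)=\sum_{\boldsymbol m\in\Z^r_{>0}}\int_{\R^n_+}\bep^{\bs-1}e^{-\lan A\boldsymbol m,\bep\ran}\,d\bep=\int_{\R^n_+}\bep^{\bs-1}\S(A)(\bep)\,d\bep=\M_{\S(A)}(\bs).
\]

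To make the interchange of sum and integral rigorous on the domain of convergence of $\zeta_A$ (namely $\Re(s_i)>r$ for all $i$, as recalled via Corollary~\ref{cor:Shintani_abs_conv_domain2}), I would invoke the Tonelli/monotone convergence theorem: for $\bs$ real with $s_i>r$ the integrand $\bep^{\bs-1}e^{-\lan A\boldsymbol m,\bep\ran}$ is nonnegative, so the sum-integral interchange is unconditionally valid and yields $\zeta_A(\bs)\Gamma(\bs)=\M_{\S(A)}(\bs)$ as an identity in $[0,+\infty]$; since the left-hand side is finite on this domain, so is the right-hand side, and then dominated convergence upgrades the identity to all complex $\bs$ in the tube over $\{\bsigma:\sigma_i>r\}$ (both sides being holomorphic there by Remark~\ref{rk:Mellin_analytic}, they agree by analytic continuation from the real points). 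This establishes the first claim.

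For the second claim —that $\zeta_A$ converges absolutely whenever $\M_{\S(A)}(\bs)$ does— I would run the same Tonelli computation in reverse with absolute values. Suppose $\M_{\S(A)}(\bs)$ is absolutely convergent, i.e. $\int_{\R^n_+}\bep^{\bsigma-1}\S(A)(\bep)\,d\bep<\infty$ where $\bsigma=\Re(\bs)$ (note $\S(A)\geq 0$, so absolute convergence is just finiteness of this integral). Expanding $\S(A)=\sum_{\boldsymbol m}e^{-\lan A\boldsymbol m,\bep\ran}$ and applying Tonelli to the nonnegative integrand,
\[
\infty>\int_{\R^n_+}\bep^{\bsigma-1}\S(A)(\bep)\,d\bep=\sum_{\boldsymbol m\in\Z^r_{>0}}\int_{\R^n_+}\bep^{\bsigma-1}e^{-\lan A\boldsymbol m,\bep\ran}\,d\bep=\sum_{\boldsymbol m\in\Z^r_{>0}}\prod_{i=1}^n\Gamma(\sigma_i)\,\bigl((A\boldsymbol m)_i\bigr)^{-\sigma_i},
\]
where the last equality is the single-summand computation above (which requires $\sigma_i>0$; but if some $\sigma_i\le 0$ then $\int_0^\infty\epsilon_i^{\sigma_i-1}e^{-c\epsilon_i}d\epsilon_i=\infty$, contradicting finiteness, so necessarily $\bsigma>0$). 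Dividing by the positive finite constant $\prod_i\Gamma(\sigma_i)$ shows $\sum_{\boldsymbol m}\prod_i((A\boldsymbol m)_i)^{-\sigma_i}<\infty$, which is precisely absolute convergence of $\zeta_A(\bs)$.

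The main obstacle is bookkeeping rather than conceptual: one must be careful that the per-summand change of variables $\epsilon_i\mapsto\epsilon_i/(A\boldsymbol m)_i$ genuinely requires $(A\boldsymbol m)_i>0$ for every $i$, which is where the hypothesis $A\in\Sig$ (at least one positive entry per row) is used, and one must correctly track that the Mellin measure $\bep^{\bs-1}d\bep$ scales by $\prod_i(A\boldsymbol m)_i^{-\sigma_i}$ under this substitution, matching the $-s_i$ powers in the definition \eqref{eq:shintanizeta} of $\zeta_A$. Once the nonnegativity is exploited via Tonelli, no delicate estimates are needed; the analytic continuation step is routine given Remark~\ref{rk:Mellin_analytic}.
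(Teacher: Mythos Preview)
Your proposal is correct and follows essentially the same route as the paper: both expand $\S(A)(\bep)=\sum_{\boldsymbol m}e^{-\lan A\boldsymbol m,\bep\ran}$, swap sum and integral by Tonelli/Fubini (justified by nonnegativity), and reduce to the one-variable Gamma integral via the change of variables $\epsilon_i\mapsto\epsilon_i/(A\boldsymbol m)_i$; the paper merely runs the chain from $\M_{\S(A)}$ toward $\zeta_A\Gamma$ while you go the other way.

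One minor caveat: your parenthetical identification of the domain of convergence via Corollary~\ref{cor:Shintani_abs_conv_domain2} is circular in the paper's logical order, since that corollary is derived \emph{from} the present proposition (through Proposition~\ref{prop:Shintani_conv_domain}). Your argument does not actually need it---Tonelli gives the identity in $[0,\infty]$ for any real $\bsigma>0$, and the hypothesis ``$\bs$ in the domain of convergence of $\zeta_A$'' is precisely what makes the left side finite---so simply drop that reference and the proof stands.
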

\begin{proof}The statement follows from:
\begin{align*}
\M_{\S(A)}(\bs)=\M_{\S(C_1\otimes\cdots\otimes C_r)}(\bs)&=\int_{\R_{+}^n}\bep^{\bs-1}\left(\prod_{j=1}^r\sum_{\boldsymbol m\in\Z_{\geq1}^r}e^{- m_jC_j (\bep)}\right)d\bep\\
&=\int_{\R_{+}^n}\bep^{\bs-1}\left(\sum_{\boldsymbol m\in\Z_{\geq1}^r}e^{-\langle A^t\bep, \boldsymbol m\rangle}\right)d\bep\\
&=\sum_{\boldsymbol m\in\Z_{\geq1}^r}\int_{\R_{+}^n}\bep^{\bs-1}e^{-\langle\bep, A\boldsymbol m\rangle}d\bep\\
&=\sum_{\boldsymbol m\in\Z_{\geq1}^r}\prod_{i=1}^n\int_{\R_{+}}\epsilon_i^{s_i-1}e^{-L_i(\boldsymbol m)\epsilon_i}d\epsilon_i\\
&=\sum_{\boldsymbol m\in\Z_{\geq1}^r}\prod_{i=1}^nL_i(\boldsymbol m)^{-s_i}\int_{\R_{+}}t_i^{s_i-1}e^{-t_i}dt_i=\zeta_{A}(\bs)\Gamma(\bs).
\end{align*}
Here $\Gamma(\bs)=\Gamma(s_1)\cdots\Gamma(s_n)$, and the $L_i$'s are the lines of the matrix $A$. As for the columns, we denote with some abuse of notation $L_i(\boldsymbol m)=\lan\boldsymbol m,L_i\ran$. In the third line Fubini's theorem is used since $e^{-\langle\bep, A\boldsymbol m\rangle}$ is positive. The statement of absolute convergence follows also from Fubini's theorem.

\end{proof}

We focus on the study of $\M_{\S(A)}$ in order to build a meromorphic continuation for $\zeta_A$. For that purpose we prove two lemmas.

\begin{lem}\label{lem:hbounded} The map $x\mapsto e^{-x}h(x)$ lies in $\Sc(\R_{\geq0})$, where $h:\R_{\geq0} \to \R$  is defined as $h(x)=\frac{{\rm Td}(x)e^x-1}{x}$ or equivalently \[\frac{{\rm Td}(x)}{x}=e^{-x}\left(\frac{1}{x}-h(x)\right).\]
 \end{lem}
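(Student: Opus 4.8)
The plan is to reduce the statement to the already-established fact (Lemma \ref{lem:ToddisSchwartzinR}) that $\Td$ is Schwartz on $\R_{\geq0}$, together with the observation that $e^{-x}/x$ has a controlled singularity at the origin only through the factor $1/x$, which is cancelled by the numerator. First I would check that $h$ is genuinely well-defined and smooth on all of $\R_{\geq0}$, including at $x=0$: writing $\Td(x)e^x = \frac{x e^x}{e^x-1}$, a Taylor expansion near $0$ gives $\Td(x)e^x = 1 + \frac{x}{2} + \frac{x^2}{12} + \cdots$, so $\Td(x)e^x - 1 = \frac{x}{2} + O(x^2)$ and hence $h(x) = \frac{\Td(x)e^x-1}{x} = \frac12 + O(x)$ extends analytically across $0$. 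Away from $0$ the function is visibly smooth since $\Td$ is meromorphic with poles only at $2\pi i k$, $k\neq 0$. The equivalent reformulation $\frac{\Td(x)}{x} = e^{-x}\bigl(\frac1x - h(x)\bigr)$ is then just algebra: multiply through by $e^x$ and rearrange.

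Next I would establish that $x\mapsto e^{-x}h(x)\in\Sc(\R_{\geq0})$. Since we have already shown $h$ is smooth on $\R_{\geq0}$, it and all its derivatives are bounded on any compact $[0,R]$, so the only issue is decay as $x\to\infty$. For that I would write $e^{-x}h(x) = e^{-x}\cdot\frac{\Td(x)e^x - 1}{x} = \frac{\Td(x)}{x} - \frac{e^{-x}}{x}$. The second term $e^{-x}/x$ is manifestly rapidly decreasing together with all its derivatives for $x$ bounded away from $0$ (each derivative is $e^{-x}$ times a Laurent polynomial in $x$). For the first term $\Td(x)/x$: Lemma \ref{lem:ToddisSchwartzinR} tells us $\Td\in\Sc(\R_{\geq0})$, so $x^\alpha \Td^{(\beta)}(x)\to 0$ for all $\alpha,\beta$; dividing by $x$ (for $x\geq 1$, say) only improves decay, and by the Leibniz rule each derivative of $\Td(x)/x$ is a finite combination of terms $\Td^{(j)}(x)\cdot x^{-1-m}$, each of which is rapidly decreasing by the Schwartz property of $\Td$. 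Hence $\Td(x)/x$ is rapidly decreasing with all derivatives near infinity. Combining, $e^{-x}h(x)$ and all its derivatives decay faster than any power, which is precisely the Schwartz condition on $\R_{\geq0}$ (noting such a function is automatically bounded there).

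Alternatively, and perhaps more cleanly, I would argue directly: $e^{-x}h(x) = \frac{\Td(x) - x e^{-x}}{x}$, and near $x=0$ the numerator vanishes to order at least $1$ (indeed $\Td(x) = x - \frac{x^2}{2} + \cdots$ and $xe^{-x} = x - x^2 + \cdots$, so the numerator is $\frac{x^2}{2}+O(x^3)$), confirming smoothness at the origin, while for large $x$ both $\Td(x)/x$ and $e^{-x}$ are rapidly decreasing by Lemma \ref{lem:ToddisSchwartzinR} and elementary estimates respectively. The main (and really only mild) obstacle is bookkeeping the behaviour at $x=0$: one must be careful that dividing the Schwartz function $\Td(x)$ by $x$ does not destroy smoothness there, which is why the cancellation in the numerator $\Td(x)e^x - 1$ (vanishing at $0$) is essential and must be made explicit via the Taylor expansion of $\Td$. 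Everything else is a routine application of the Leibniz rule plus Lemma \ref{lem:ToddisSchwartzinR}.
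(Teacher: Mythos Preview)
Your main argument is correct and essentially identical to the paper's: both use the decomposition $e^{-x}h(x)=\frac{\Td(x)}{x}-\frac{e^{-x}}{x}$, invoke Lemma~\ref{lem:ToddisSchwartzinR} for the behaviour at infinity, and handle $x=0$ by a Taylor expansion (the paper writes it with Bernoulli numbers, you with the expansion of $\Td(x)e^x$, but this is cosmetic).

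One caveat: your ``alternative'' paragraph contains two slips. First, $e^{-x}h(x)=\dfrac{\Td(x)-e^{-x}}{x}$, not $\dfrac{\Td(x)-xe^{-x}}{x}$. Second, the Todd function satisfies $\Td(0)=1$, with expansion $\Td(x)=1-\tfrac{x}{2}+\tfrac{x^2}{12}+\cdots$, not $\Td(x)=x-\tfrac{x^2}{2}+\cdots$; consequently the numerator $\Td(x)-e^{-x}$ vanishes only to first order (it equals $\tfrac{x}{2}+O(x^2)$), not to second order. This still suffices for smoothness after dividing by $x$, so the conclusion survives, but the computation as written is wrong.
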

 \begin{proof}
 Indeed, $x\mapsto e^{-x}h(x)=\frac{{\rm Td}(x)}{x}-\frac{e^{-x}}{x}$, thus it lies in $\Sc(\R_+)$ as a consequence of Lemma \ref{lem:ToddisSchwartzinR}. Moreover, in a neighborhood of zero \begin{align*}e^{-x}h(x)&=\frac{{\rm Td}(x)}{x}-\frac{e^{-x}}{x}\\
&=\sum_{n=-1}^\infty B_{n+1}\frac{x^{n}}{(n+1)!}-\sum_{n=-1}^\infty \frac{x^{n}}{(n+1)!}\\
&=\sum_{n=0}^\infty (B_{n+1}-1)\frac{x^{n}}{(n+1)!}.
\end{align*}
Here $B_n$ are the Bernoulli numbers, and we used the fact that $B_0=1$. Therefore $x\mapsto e^{-x}h(x)$ is analytic at zero with $h(0)=-\frac{3}{2}$, and in particular $x\mapsto e^{-x}h(x)\in\Sc(\R_{\geq0})$.
\end{proof}

For the rest of this paragraph we set $\phi(x):=e^{-x}\in\Sc(R_{\geq0})$ in order to simplify the notation. 

\begin{lem}\label{lem:MellinasSum}
For $A\in\Sig$, and $J\subset [r]$, we have that $\phi(C_{[r]}(\bep))h(C_J(\bep))\in\Sc(\R_{\geq0}^n)$, where we used the compact notation $h(C_J(\bep)):=\prod_{j\in J}h(C_j(\bep))$ and $\phi(C_{[r]}(\bep)):=\prod_{j=1}^r\phi(C_j(\bep))$. Furthermore, we have 
\[\M_{\S(A)}(\bs)=\M_{\S(C_1\otimes\dots\otimes C_r)}(\bs)=\sum_{I\sqcup J=[r]}\int_{\R_+^n}\bep^{\bs-1}\frac{ \phi(C_{[r]}(\bep))h(C_J(\bep))}{C_I(\bep)}d\bep.\] 

\end{lem}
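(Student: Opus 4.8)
The plan is to expand the product defining $\S(A)$ by writing each factor $\S(C_j)$ in terms of the Todd function and then isolating the ``pole'' contribution. Recall from Definition-Proposition \ref{defn-prop:SumandIntCodomain} that $C_j(\bep)\,\S(C_j)(\bep)={\rm Td}(C_j(\bep))$, so that $\S(C_j)(\bep)=\frac{{\rm Td}(C_j(\bep))}{C_j(\bep)}$. By Lemma \ref{lem:hbounded}, for each $j$ we have the identity
\[
\S(C_j)(\bep)=\frac{{\rm Td}(C_j(\bep))}{C_j(\bep)}=\phi(C_j(\bep))\left(\frac{1}{C_j(\bep)}-h(C_j(\bep))\right),
\]
where $\phi(x)=e^{-x}$. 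Multiplying these $r$ identities together and expanding the product $\prod_{j=1}^r\left(\frac{1}{C_j(\bep)}-h(C_j(\bep))\right)$ over all ways of choosing, for each $j$, either the term $\frac{1}{C_j(\bep)}$ or the term $-h(C_j(\bep))$, gives
\[
\S(A)(\bep)=\prod_{j=1}^r\S(C_j)(\bep)=\phi(C_{[r]}(\bep))\sum_{I\sqcup J=[r]}(-1)^{|J|}\,\frac{h(C_J(\bep))}{C_I(\bep)},
\]
where $I$ indexes the factors contributing $\frac{1}{C_j}$ and $J=[r]\setminus I$ those contributing $-h(C_j)$. (The sign $(-1)^{|J|}$ should be folded into the statement; I would check whether the paper intends $h$ to already absorb it or whether the displayed identity in the Lemma is stated up to sign — in any case the set of hyperplanes is unaffected.)

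Next I would establish the claimed regularity: $\phi(C_{[r]}(\bep))\,h(C_J(\bep))\in\Sc(\R^n_{\geq0})$. The key input is that, since $A\in\Sig$, every row of $A$ has a positive entry, so as $\bep\to\infty$ in $\R^n_{\geq0}$ at least one coordinate $\epsilon_i\to\infty$, hence at least one linear form $C_{j'}(\bep)\to\infty$ (because column $j'$ hit by that row is nonzero in that coordinate). Therefore $\phi(C_{[r]}(\bep))=\prod_{j=1}^r e^{-C_j(\bep)}$ decays at least like $e^{-C_{j'}(\bep)}$ along any direction going to infinity. Meanwhile, by Lemma \ref{lem:hbounded}, each $x\mapsto e^{-x}h(x)$ is a Schwartz function on $\R_{\geq0}$, in particular $h$ itself is smooth on $\R_{\geq0}$ and at most polynomially growing together with all its derivatives (since $e^{-x}h(x)$ and its derivatives are bounded, $h^{(k)}$ grows at most like $e^{x}$ times a constant; more precisely one uses that $e^{-x}h(x)$ being Schwartz forces $h$ to have at most exponential growth which is killed by the surviving factors $e^{-C_j(\bep)}$ with $j\in I$). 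Composing with the linear forms and using that $\Sc(\R^n_{\geq0})$ is stable under multiplication (Proposition \ref{prop:Cb-MS-algebras}) and under composition with linear maps, together with the decay furnished by $\phi(C_{[r]})$, gives $\phi(C_{[r]})\,h(C_J)\in\Sc(\R^n_{\geq0})$. I expect this regularity bookkeeping — matching the exponential growth of the $h$-factors against the exponential decay of the surplus $e^{-C_j}$ factors in a way uniform in all directions $\bep\to\infty$, and for all derivatives — to be the main technical obstacle, though it is essentially the same argument already used in the proof of the proposition following \eqref{eq:sigA}.

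Finally, I would apply the Mellin transform $\M_{(\cdot)}(\bs)=\int_{\R^n_+}\bep^{\bs-1}(\cdot)\,d\bep$ to the identity for $\S(A)(\bep)$. Linearity of the integral gives
\[
\M_{\S(A)}(\bs)=\sum_{I\sqcup J=[r]}(-1)^{|J|}\int_{\R^n_+}\bep^{\bs-1}\,\frac{\phi(C_{[r]}(\bep))\,h(C_J(\bep))}{C_I(\bep)}\,d\bep,
\]
which is the asserted formula (modulo the sign convention discussed above), the term $I=[r]$, $J=\emptyset$ contributing the ``most singular'' piece $\M_{\phi(C_{[r]})/C_{[r]}}$. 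Interchanging the finite sum with the integral is immediate once each summand is absolutely convergent on a common nonempty domain; this convergence will be supplied in the sequel by Theorem \ref{cor:mellinprodschwratz} applied to the Schwartz function $\phi(C_{[r]})h(C_J)$ and the polynomial $C_I=\prod_{j\in I}C_j$ (which is completely non-vanishing on $\R^n_+$ since each $C_j$ has nonnegative coefficients, not all zero, so neither $C_I$ nor any of its truncations vanishes on the open positive orthant). This closes the proof of the Lemma.
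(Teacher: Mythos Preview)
Your approach is the same as the paper's: write $\S(C_j)=\phi(C_j)\bigl(\tfrac{1}{C_j}-h(C_j)\bigr)$, expand the product, and check that each numerator $\phi(C_{[r]})h(C_J)$ lies in $\Sc(\R^n_{\geq0})$. Your observation about the missing $(-1)^{|J|}$ is correct; the paper's displayed formula drops this sign, which is harmless for the subsequent pole analysis but is indeed an oversight.

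The one place where your write-up is muddled is the regularity bookkeeping. You try to control the growth of the $h$-factors using the ``surviving'' exponentials $e^{-C_j}$ with $j\in I$; this is the wrong pairing and would leave you stuck when $I=\emptyset$. The clean argument, which is what the paper does, is to group each $h(C_j)$ with its \emph{own} exponential: for $j\in J$ the factor $e^{-C_j(\bep)}h(C_j(\bep))$ is bounded (indeed Schwartz in $C_j$) by Lemma~\ref{lem:hbounded}, so $\phi(C_J)h(C_J)=\prod_{j\in J}e^{-C_j}h(C_j)$ is bounded with all derivatives bounded. The remaining factor is $\phi(C_{[r]\setminus J})=\prod_{j\in I}e^{-C_j}$. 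Now along any path $\bep\to\infty$ in $\R^n_{\geq0}$ some $C_{j'}(\bep)\to\infty$; if $j'\in J$ then $e^{-C_{j'}}h(C_{j'})$ gives the rapid decay, and if $j'\in I$ then $e^{-C_{j'}}$ does. This is exactly the case split in the paper's proof, and it is also, as you note, the same mechanism used after \eqref{eq:sigA}.
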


\begin{proof}
Fix $J\subset[r]$, we prove that $\phi(C_{[r]}(\bep))h(C_J(\bep))\in\Sc(\R_{\geq0}^n)$. Since $A\in\Sig$, there is at least a non-zero element in each row. The latter implies that for every path in which $\bep$ tends to $\infty$, there is at least a $j\in[r]$ such that $C_j(\bep)\to\infty$. 
Assume $j\in J$, then Lemma \ref{lem:hbounded} implies that 
$\phi(C_{J}(\bep))h(C_J(\bep)){\longrightarrow}0$ as $\bep\to\infty$. Since $\phi(C_{[r]\setminus J})$ is bounded, this yields the result. Assume otherwise $j\in[r]\setminus J$, then $\phi(C_{[r]\setminus J}(\bep))\longrightarrow0$ as $\bep\to\infty$. By Lemma \ref{lem:hbounded} $\phi(C_{J})h(C_J)$ is bounded and therefore $\phi(C_{[r]}(\bep))h(C_J(\bep))\in\Sc(\R_{\geq0}^n)$.

On the other hand, since $\S(C_j)=\frac{{\rm Td}(C_j)}{C_j}$ (see Definition-Proposition \ref{defn-prop:SumandIntCodomain}), with the notations of Lemma \ref{lem:hbounded}, it follows that:
\begin{align*}
\M_{\S(C_1\otimes\dots\otimes C_r)}(\bs)&=\int_{\R_+^n}\bep^{\bs-1}\S(C_1\otimes\cdots\otimes C_r)(\bep)d\bep\\
&=\int_{\R_+^n}\bep^{\bs-1}\prod_{j=1}^r\S(C_j(\bep))d\bep\\
&=\int_{\R_+^n}\bep^{\bs-1}\prod_{j=1}^r\left(\frac{e^{-C_j(\bep)}}{C_j(\bep)}-e^{-C_j(\bep)}h(C_j(\bep))\right)d\bep\\
&=\sum_{I\sqcup J=[r]}\int_{\R_+^n}\bep^{\bs-1}\frac{ \phi(C_{[r]}(\bep))h(C_J(\bep))}{C_I(\bep)}d\bep,
\end{align*}
which proves the statement.
\end{proof}

We show what is the domain of convergence for a sum of the type shown in \eqref{eq:shintanizeta}.
\begin{prop}\label{prop:Shintani_conv_domain}
Let $A\in\Sig$, the sum on the right hand side of the Shintani zeta function $\zeta_A$ as defined in \eqref{eq:shintanizeta} is absolutely convergent for 
\begin{equation}\label{eq:intersection_1}\bsigma\in\bigcap_{I\subset[r]}\left(\Delta_{C_I}+\R_+^n\right),\end{equation}
where $\bsigma=\Re(\bs)$.
\end{prop}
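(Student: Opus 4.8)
The plan is to derive the convergence of the defining sum of $\zeta_A$ from the results already established, by interpreting it as a Mellin transform. By Proposition \ref{prop:Shintani_abs_conv_domain}, the series defining $\zeta_A(\bs)$ converges absolutely exactly when $\M_{\S(A)}(\bs)$ does, so it suffices to locate a domain on which the latter integral converges absolutely. For that I would invoke the decomposition from Lemma \ref{lem:MellinasSum}, which writes
\[\M_{\S(A)}(\bs)=\sum_{I\sqcup J=[r]}\int_{\R_+^n}\bep^{\bs-1}\frac{\phi(C_{[r]}(\bep))h(C_J(\bep))}{C_I(\bep)}\,d\bep,\]
together with the fact proved there that each numerator $\phi(C_{[r]}(\bep))h(C_J(\bep))$ lies in $\Sc(\R^n_{\geq0})$, hence is measurable, bounded and rapidly decreasing.

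The core step is then to apply Theorem \ref{cor:mellinprodschwratz} to each summand. For a fixed partition $I\sqcup J=[r]$, the denominator is the polynomial $C_I(\bep)=\prod_{j\in I}\langle C_j,\bep\rangle$; since all the $C_j$ have non-negative entries with at least one positive entry, $C_I$ is a product of linear forms that are strictly positive on $\R^n_+$, so $C_I$ (and every truncated polynomial of it, which is again a product of such forms restricted to a face) is completely non-vanishing on $\R^n_+$. Therefore Theorem \ref{cor:mellinprodschwratz} gives absolute convergence and analyticity of the $I$-summand on the tube domain $\{\bs:\Re(\bs)\in\Delta_{C_I}+\R^n_+\}$. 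Taking the intersection over all $I\subset[r]$ (the case $I=\emptyset$ giving $C_\emptyset=1$, covered by Remark \ref{rk:MellinSchwartz}, whose domain $\Re(\bs)>1$ contains every $\Delta_{C_I}+\R^n_+$ anyway since $\Delta_{C_I}$ has lattice-point vertices of coordinate sum $\geq 1$), the whole sum $\M_{\S(A)}(\bs)$ converges absolutely whenever $\bsigma=\Re(\bs)$ lies in $\bigcap_{I\subset[r]}(\Delta_{C_I}+\R^n_+)$. By the equivalence of absolute convergence from Proposition \ref{prop:Shintani_abs_conv_domain}, the same holds for the defining series of $\zeta_A$, which is the claim in \eqref{eq:intersection_1}.

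I do not expect a serious obstacle here, since all the analytic heavy lifting is done in Theorem \ref{cor:mellinprodschwratz} and Lemma \ref{lem:MellinasSum}. The one point requiring a little care is bookkeeping the $I=\emptyset$ term separately, because Theorem \ref{cor:mellinprodschwratz} as stated presupposes an actual polynomial $p$ in the denominator and Proposition \ref{prop:ExtendedDomainofMellin} explicitly excludes $g\equiv 1$; this is handled by the variant in Remark \ref{rk:MellinSchwartz}, and one should check that its region $\Re(s_i)>1$ does not shrink the stated intersection — it does not, because each $\Delta_{C_I}+\R^n_+$ is contained in $\{\bsigma:\sigma_i>1\ \forall i\}$ as every column $C_j$ has a positive coordinate in every row, forcing the exponent vectors appearing in $C_I$ to have all coordinates $\geq 1$ in the relevant directions. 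A second minor point is to make sure the Minkowski-sum notation $\Delta_{C_I}+\R^n_+$ used here matches the tube-domain condition $\Re(\bs)=\bsigma\in\Delta_{C_I}+\R^n_+$ of Theorem \ref{cor:mellinprodschwratz}, which it does by definition.
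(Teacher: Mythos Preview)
Your approach is exactly the paper's: reduce to absolute convergence of $\M_{\S(A)}$ via Proposition~\ref{prop:Shintani_abs_conv_domain}, decompose by Lemma~\ref{lem:MellinasSum}, and apply Theorem~\ref{cor:mellinprodschwratz} termwise. That is all the paper does, in two sentences.

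Two small corrections to your bookkeeping around $I=\emptyset$. First, there is no need to treat this case separately: the constant polynomial $C_\emptyset=1$ is completely non-vanishing on $\R^n_+$, its Newton polytope is $\{0\}$, and Theorem~\ref{cor:mellinprodschwratz} (whose proof explicitly handles polytopes with empty interior) gives the convergence domain $\{0\}+\R^n_+=\R^n_+$, which contains the full intersection. Second, your fallback justification is incorrect on two counts: it is not true that ``every column $C_j$ has a positive coordinate in every row'' for a general $A\in\Sig$ (the definition only guarantees one nonzero per row and per column), and consequently $\Delta_{C_I}+\R^n_+$ is \emph{not} contained in $\{\bsigma:\sigma_i>1\ \forall i\}$ in general (take $C_1=(1,0)^t$: then $\Delta_{C_1}+\R^2_+=\{\sigma_1>1,\sigma_2>0\}$). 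Since the separate treatment is unnecessary, this misstep is harmless, but you should remove that paragraph.
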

\begin{proof}
From Proposition \ref{prop:Shintani_abs_conv_domain}, it is enough to prove that $\M_{\S(A)}$ is absolutely convergent for $\bsigma$ lying in the intersection on the right hand side of \eqref{eq:intersection_1}. The result follows from Lemma \ref{lem:MellinasSum} and Theorem \ref{cor:mellinprodschwratz}. 
\end{proof}

As a corollary, we recall a well known result which we prove for the sake of completeness.

\begin{cor}\label{cor:Shintani_abs_conv_domain2}
Let $A\in\Sig$, the sum on the right hand side of the Shintani zeta function $\zeta_A$ as defined in \eqref{eq:shintanizeta} is absolutely convergent for $\Re{s_i}>r$ for every $1\leq i\leq r$.
\end{cor}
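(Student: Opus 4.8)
The plan is to deduce this corollary from Proposition \ref{prop:Shintani_conv_domain} by showing that the point $\bsigma = (r,\dots,r)$ — and hence, by openness of the relevant region, any $\bsigma$ with all coordinates strictly greater than $r$ — lies in the intersection $\bigcap_{I\subset[r]}(\Delta_{C_I}+\R^n_+)$. So the whole task reduces to a containment check: for each nonempty $I\subset[r]$, we must verify that $(r,\dots,r)\in\Delta_{C_I}+\R^n_+$, i.e. that there is a point of the Newton polytope $\Delta_{C_I}$ that is coordinatewise $\le (r,\dots,r)$ (with strict inequality, matching the half-open orthant $\R^n_+=(0,\infty)^n$; in fact we will get strict inequality for free since we work with $s_i>r$, not $s_i=r$).

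The key observation is a description of $\Delta_{C_I}$. Recall $C_I(\bep)=\prod_{j\in I}\langle C_j,\bep\rangle$, a product of $|I|$ linear forms each of which is a nonnegative combination of the $\epsilon_i$'s (by the definition of $\Sig$, each column $C_j$ has nonnegative entries). Expanding the product, every monomial $\bep^\alpha$ appearing in $C_I$ has total degree $|I|\le r$, so its exponent vector $\alpha$ satisfies $\alpha_1+\dots+\alpha_n=|I|$ and $\alpha\in\Z^n_{\ge0}$. Consequently $\Delta_{C_I}$, being the convex hull of such exponent vectors, is contained in the simplex $\{\alpha\in\R^n_{\ge0}:\sum_i\alpha_i=|I|\}$. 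In particular every vertex $\alpha$ of $\Delta_{C_I}$ has $0\le\alpha_i\le|I|\le r$ for each $i$. Picking any such vertex $\alpha$, we then have $(r,\dots,r)=\alpha+(r-\alpha_1,\dots,r-\alpha_n)$, and since each $r-\alpha_i\ge 0$ while we actually need a point of $\Delta_{C_I}+\R^n_+$ with the open orthant, we instead start from $\bsigma$ with $\sigma_i>r$, write $\bsigma=\alpha+(\sigma_1-\alpha_1,\dots,\sigma_n-\alpha_n)$ and note $\sigma_i-\alpha_i>r-\alpha_i\ge0$, so $\bsigma-\alpha\in\R^n_+$ and thus $\bsigma\in\Delta_{C_I}+\R^n_+$. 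Here I should be slightly careful that $\Delta_{C_I}$ is nonempty, which holds because each $C_j\neq 0$ (each column has a positive entry), so $C_I$ is a nonzero polynomial.

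I would present this cleanly by first noting $\Delta_{C_I}$ is nonempty and contained in $\{\alpha\in\R^n_{\ge0}:\sum\alpha_i=|I|\}$, hence $\Delta_{C_I}\subset[0,r]^n$; then fixing any $\alpha\in\Delta_{C_I}$ and any $\bsigma$ with $\sigma_i>r$ for all $i$, observing $\bsigma-\alpha\in\R^n_+$; concluding $\bsigma\in\bigcap_{I\subset[r]}(\Delta_{C_I}+\R^n_+)$; and invoking Proposition \ref{prop:Shintani_conv_domain} to get absolute convergence. (I note the statement as written says "for every $1\le i\le r$'' where it presumably means $1\le i\le n$, matching $\bs\in\C^n$ in Definition \ref{defn:Ashintani}; I would silently use $1\le i\le n$.)

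The main obstacle — such as it is — is purely the bookkeeping of the degree/simplex bound on $\Delta_{C_I}$ and making sure the open-versus-closed orthant distinction is handled so that strict inequalities $\sigma_i>r$ land us genuinely in $\Delta_{C_I}+\R^n_+$ rather than its closure; there is no real difficulty beyond this, since the heavy analytic lifting has already been done in Proposition \ref{prop:Shintani_conv_domain} via Theorem \ref{cor:mellinprodschwratz}.
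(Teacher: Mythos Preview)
Your proof is correct and follows essentially the same route as the paper's: both reduce to Proposition~\ref{prop:Shintani_conv_domain}, observe that $\Delta_{C_I}$ lies in the simplex $\{\alpha\in\R^n_{\ge0}:\sum_i\alpha_i=|I|\}$ (so each $\alpha_i\le|I|\le r$), and conclude that any $\bsigma$ with all $\sigma_i>r$ satisfies $\bsigma-\alpha\in\R^n_+$ for some (indeed any) $\alpha\in\Delta_{C_I}$. Your extra care with the nonemptiness of $\Delta_{C_I}$ and the open-orthant issue, and your remark that the index range should read $1\le i\le n$, are well taken.
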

\begin{proof}
By means of Proposition \ref{prop:Shintani_conv_domain}, we have to show that any $\bs$ such that $\Re\bs=\bsigma$ lies in the interior of the intersection \begin{equation}\label{neq:intersection}\bigcap_{I\subset[r]}\left(\Delta_{C_I}+\R_+^n\right).\end{equation}

Since $\Delta_{C_i}\subset\{(x_1,\dots,x_n)\in\R^n_{\geq0}:\, x_1+\cdots+x_n=1\}$, then for any $I\subset[r]$, $\Delta_{C_I}\subset\{(x_1,\dots,x_n)\in\R^n_{\geq0}:\, x_1+\cdots+x_n=|I|\}$.  We prove that
\[\Delta_{C_I}+\R^n_+\supset\{(x_1,\cdots,x_n)\in\R_+^n:\forall i\in[n]\,, x_i>|I|\}.\]
Indeed, for any $\bsigma \in \{(x_1,\cdots,x_n)\in\R_{\geq0}^n:\forall i\in[n]\,, x_i>|I|\}$ and every $\bsigma_0\in\{(x_1,\dots,x_n)\in\R^n_{\geq0}:\, x_1+\cdots+x_n=|I|\}$, it is straightforward to see that $\bsigma-\bsigma_0\in\R^n_+$.

 In particular, the set $\{\bsigma\in\R^n: \sigma_i>r\}$ is a subset of the intersection in \eqref{neq:intersection} which proves the result.
\end{proof}

As mentioned in the introduction, our aim is to build a meromorphic continuation of \eqref{eq:shintanizeta} to the whole space $\C^n$ and to give a precise description of its poles. In doing so, we refine a result by Matsumoto \cite{M2} that we have recalled in the introduction. Our result follows from Theorem \ref{thm:thm2np-phi}.

\begin{thm}\label{thm:PolarlocusShintani}
Let $A\in\Sig$ and write $C_J(\bep)=\prod_{j\in J}C_{j}(\bep)$ where $J\subset[r]$ and $C_j$ the $j$-th  column of $A$. The Shintani zeta function \[\zeta_A(\bs)=\sum_{m_1\geq1}\cdots\sum_{m_r\geq1}(a_{11}m_1+\dots+a_{1r}m_r)^{-s_1}\times\cdots\times(a_{n1}m_1+\dots+a_{nr}m_r)^{-s_n}\] admits a meromorphic continuation to $\C^n$ with possible simple poles located on the hyperplanes \[\langle\boldsymbol\mu_k^J,\bs\rangle=\nu_k^J-l.\]
Here $\boldsymbol\mu_k^J$ are the vectors in $\Z^n$ on the inward normal direction of the facets of $\Delta_{C_J}+\R^n_+$ with mutually coprime coordinates, $\nu_k^J$ are integers as described in \eqref{eq:NewtonpoltoInfty}, and $l\in\Z_{\geq0}$. 

Moreover if $\boldsymbol\mu_k^J=e_i$ for some vector of the canonical basis $e_i$, then only the hyperplanes where $0\leq l\leq \nu_k^J-1$ carry poles.
\end{thm}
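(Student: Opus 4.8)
The plan is to combine the three pieces we have assembled: the integral representation of $\zeta_A\Gamma$ from Proposition \ref{prop:Shintani_abs_conv_domain}, the decomposition of $\M_{\S(A)}$ as a finite sum of Mellin transforms of rational functions damped by Schwartz functions from Lemma \ref{lem:MellinasSum}, and the meromorphic continuation of each such Mellin transform from Theorem \ref{thm:thm2np-phi}. Concretely, first I would fix a subset $J\subset[r]$ and apply Lemma \ref{lem:MellinasSum} to write
\[\M_{\S(A)}(\bs)=\sum_{I\sqcup J=[r]}\int_{\R_+^n}\bep^{\bs-1}\frac{\phi(C_{[r]}(\bep))h(C_J(\bep))}{C_I(\bep)}\,d\bep,\]
where the numerator of each summand lies in $\Sc(\R_{\geq0}^n)$ by that same lemma, and $C_I=\prod_{i\in I}C_i$ is a completely non-vanishing polynomial on $\R_+^n$ (since each $C_i$ is a linear form with non-negative, not-all-zero coefficients, it is positive on $\R_+^n$, and its truncated polynomials are again such linear forms — one should note this small verification). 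Then Theorem \ref{thm:thm2np-phi} applies to each summand with $p=C_I$, giving a meromorphic continuation $\Phi_I(\bs)\prod_{k}\Gamma(\lan\bmu_k^I,\bs\ran-\nu_k^I)$ with $\Phi_I$ entire, where the $\bmu_k^I$ and $\nu_k^I$ describe $\Delta_{C_I}+\R_+^n$ as in \eqref{eq:NewtonpoltoInfty}. Summing over $I$ and dividing by $\Gamma(\bs)=\Gamma(s_1)\cdots\Gamma(s_n)$ (whose zeros are controlled, since $1/\Gamma$ is entire) yields a meromorphic continuation of $\zeta_A$ to all of $\C^n$, and the poles of $\prod_k\Gamma(\lan\bmu_k^I,\bs\ran-\nu_k^I)$ are precisely the simple poles along $\lan\bmu_k^I,\bs\ran=\nu_k^I-l$ for $l\in\Z_{\geq0}$. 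Taking the union over all $I\subset[r]$ (equivalently over all $J=[r]\setminus I$) gives the asserted list of hyperplanes.

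For the refinement in the last sentence — when $\bmu_k^I=e_i$, only $0\le l\le\nu_k^I-1$ carry poles — I would argue as follows. If $\bmu_k^I=e_i$, the corresponding facet of $\Delta_{C_I}+\R_+^n$ lies on the hyperplane $\sigma_i=\nu_k^I$, and the candidate poles coming from this factor are at $s_i=\nu_k^I-l$, $l\ge 0$, i.e. at $s_i\in\{\nu_k^I,\nu_k^I-1,\dots,1,0,-1,\dots\}$. The point is that the factor $\Gamma(s_i-\nu_k^I)$ has poles at $s_i-\nu_k^I\in\Z_{\le 0}$, but the numerator $\Phi_I(\bs)$ — or more precisely the structure of $\M_{\S(A)}(\bs)/\Gamma(\bs)$ — contains a factor $1/\Gamma(s_i)$ coming from dividing by $\Gamma(\bs)$, whose zeros at $s_i\in\Z_{\le 0}$ cancel all the poles of $\Gamma(s_i-\nu_k^I)$ at $s_i=0,-1,-2,\dots$. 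Hence only the finitely many poles at $s_i=\nu_k^I,\nu_k^I-1,\dots,1$, that is $0\le l\le\nu_k^I-1$, survive. I would make this precise by isolating, among the $\Gamma$-factors in the continuation of the $I$-summand, the one(s) whose $\bmu$-vector is $e_i$ and pairing them against the $1/\Gamma(s_i)$ factor: one should check that $\Gamma(s_i-\nu_k^I)/\Gamma(s_i)$ is, up to an entire factor, the polynomial $\prod_{l=1}^{\nu_k^I}(s_i-l)^{-1}$'s reciprocal — i.e. equals $1/\bigl((s_i-1)(s_i-2)\cdots(s_i-\nu_k^I)\bigr)$ — so that the only poles left are at $s_i=1,\dots,\nu_k^I$.

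The main obstacle I anticipate is precisely this last cancellation argument: one must be careful that different summands (different $I$) could in principle produce poles on the same hyperplane $s_i=\nu_k^I-l$ with $l\ge\nu_k^I$ that do \emph{not} all get killed, and also that the entire function $\Phi_I$ might itself contribute. The clean way around this is to not argue summand-by-summand for the cancellation but rather to observe that $\zeta_A(\bs)$ is manifestly periodic-free and that, for each individual variable $s_i$, the Riemann-zeta-type behaviour forces $\zeta_A$ to be holomorphic in $s_i$ for $\Re s_i$ very negative when the other variables are generic — or, more robustly, to track the $1/\Gamma(s_i)$ factor globally: since $\Gamma(\bs)^{-1}=\prod_i\Gamma(s_i)^{-1}$ multiplies the \emph{entire} expression $\M_{\S(A)}(\bs)$, and since for each $I$ and each $k$ with $\bmu_k^I=e_i$ the pole locus of $\Gamma(\lan\bmu_k^I,\bs\ran-\nu_k^I)$ at $s_i\le 0$ is contained in the zero locus of $\Gamma(s_i)^{-1}$, the product has no poles there regardless of the other factors. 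The remaining care is to ensure no \emph{other} factor $\Gamma(\lan\bmu^{I'}_{k'},\bs\ran-\nu^{I'}_{k'})$ with $\bmu^{I'}_{k'}\ne e_i$ reintroduces a pole exactly on $s_i=\nu_k^I-l$ for $l\ge\nu_k^I$; but such a coincidence, if it occurs, happens on a hyperplane of the \emph{other} type $\lan\bmu^{I'}_{k'},\bs\ran=\nu^{I'}_{k'}-l'$ which is already in our list, so it does not violate the statement. I would phrase the final write-up around this global-factor viewpoint to avoid getting tangled in the per-summand bookkeeping.
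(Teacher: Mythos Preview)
Your proposal is correct and follows essentially the same route as the paper: decompose $\M_{\S(A)}$ via Lemma \ref{lem:MellinasSum}, apply Theorem \ref{thm:thm2np-phi} to each summand, sum, and divide by $\Gamma(\bs)$; for the refinement when $\bmu_k^J=e_i$, use that the zeros of $1/\Gamma(s_i)$ at $s_i\in\Z_{\le 0}$ kill the corresponding Gamma-poles. The paper's proof is terser---it simply invokes the removable singularities theorem for the last step---whereas your explicit identity $\Gamma(s_i-\nu_k^I)/\Gamma(s_i)=1/\bigl((s_i-1)\cdots(s_i-\nu_k^I)\bigr)$ and your remark that any coincidental pole from a different $\bmu_{k'}^{I'}\ne e_i$ is already accounted for in the stated list make the argument more transparent; but the underlying logic is the same.
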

\begin{proof}
By means of Proposition \ref{prop:Shintani_abs_conv_domain} and Lemma \ref{lem:MellinasSum}, it is enough build an meromorphic continuation and describe the polar locus for the integrals \[\int_{\R_+^n}\bep^{\bs-1}\frac{ \phi(C_{[r]}(\bep))h(C_{[r]\setminus J}(\bep))}{C_J(\bep)}d\bep\]
for every $J\subset[r]$. It follows from Theorem \ref{thm:thm2np-phi} that for $J$ fixed the aforementioned integral admits a meromorphic continuation to the whole space $\C^n$ where the poles are the same as those of the functions $ \Gamma(\langle\boldsymbol\mu_k^J,\bs\rangle-\nu_k^J)$. This proves the first part of the Theorem.

Moreover, if there is a $\boldsymbol\mu_k^J=e_i$ for any $e_i$ by means of the removable singularities theorem, the function $\zeta_A(\bs)=\frac{1}{\Gamma(\bs)}\M_{\S(C_1\otimes\cdots\otimes C_r)}(\bs)$ has no poles on the hyperplanes $\langle\boldsymbol\mu_k^J,\bs\rangle=-l$ where $l\geq\nu_k^J$ which yields the result.

\end{proof}

Using the polar description of the Shintani zetas provided in the previous Theorem, we can describe the change of the polar structure under some linear transformations of $\bs$. Notice that this amounts to a transformation of the polyhedra which induce the polar structure.

\begin{cor}
Let $A$ be a matrix in $\Sig$ and $B$ an $n\times n$ real matrix, then the function $\bs\mapsto\zeta_A(B\bs)$ admits a meromorphic continuation to the space $\C^n$ with possible simple poles located on the hyperplanes $\langle B^t\boldsymbol\mu_k^J,\bs\rangle=\nu_k^J-l$. Here $B^t$ is the transpose of $B$, and $\boldsymbol\mu_k^J$, $\nu_k^J$ and $l$ are as in Theorem \ref{thm:PolarlocusShintani}.
\end{cor}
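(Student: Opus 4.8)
The plan is to reduce the statement directly to Theorem \ref{thm:PolarlocusShintani} by a linear change of variables. First I would observe that $\zeta_A(B\bs)$ is, by definition, simply the meromorphic function $\zeta_A$ precomposed with the linear map $\bs\mapsto B\bs$. Since Theorem \ref{thm:PolarlocusShintani} already provides a meromorphic continuation of $\zeta_A$ to all of $\C^n$, and precomposition of a meromorphic function on $\C^n$ with an (everywhere-defined) linear map is again meromorphic on $\C^n$, the continuation of $\bs\mapsto\zeta_A(B\bs)$ exists automatically. There is no convergence issue to revisit: the continuation is obtained purely by composing the analytic continuation already in hand.

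Next I would track where the poles go. Theorem \ref{thm:PolarlocusShintani} says the polar locus of $\zeta_A$ is contained in the union of hyperplanes $\{\bw\in\C^n:\langle\boldsymbol\mu_k^J,\bw\rangle=\nu_k^J-l\}$ over $J\subset[r]$, facets indexed by $k$, and $l\in\Z_{\geq0}$. A point $\bs$ can only be a pole of $\bs\mapsto\zeta_A(B\bs)$ if $B\bs$ lies on one of these hyperplanes, i.e. if $\langle\boldsymbol\mu_k^J,B\bs\rangle=\nu_k^J-l$ for some $J,k,l$. Using the adjoint relation $\langle\boldsymbol\mu_k^J,B\bs\rangle=\langle B^t\boldsymbol\mu_k^J,\bs\rangle$, this is exactly the hyperplane $\langle B^t\boldsymbol\mu_k^J,\bs\rangle=\nu_k^J-l$, which gives the claimed description. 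The poles remain at worst simple because composition with a linear isomorphism (or more generally a linear map transverse to the relevant hyperplane) does not raise the order of a pole along a hyperplane; when $B$ is not invertible one should note that either the hyperplane pulls back to a hyperplane with the stated equation, or $B^t\boldsymbol\mu_k^J=0$ in which case the condition $\langle B^t\boldsymbol\mu_k^J,\bs\rangle=\nu_k^J-l$ is either vacuous or never satisfied and contributes no poles, so the stated locus still contains the true polar set.

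The only mildly delicate point—and the one I would treat as the main obstacle—is the behaviour when $B$ is singular: one must make sure that pulling back a meromorphic function along a degenerate linear map still yields something meromorphic on all of $\C^n$ (rather than, say, identically infinite on some subspace) and that the simple-pole claim survives. This is handled by observing that $\zeta_A$ restricted to the image subspace $B(\C^n)$ is still meromorphic there (its polar divisor meets $B(\C^n)$ in a proper analytic subset unless $B(\C^n)$ is entirely contained in a polar hyperplane, which cannot happen since the polar hyperplanes are affine, not linear, and $0$ is not a pole), and then pulling back along the surjection $\C^n\to B(\C^n)$ preserves meromorphy and pole orders. Once this is in place, the corollary follows by the one-line computation $\langle\boldsymbol\mu_k^J,B\bs\rangle=\langle B^t\boldsymbol\mu_k^J,\bs\rangle$ together with the observation at the end of the introduction that the transformed normal vectors $B^t\boldsymbol\mu_k^J$ are exactly the inward normals to the transformed polyhedra $B^{-t}(\Delta_{C_J}+\R^n_+)$ (when $B$ is invertible), which is the geometric reformulation alluded to in the statement preceding the corollary.
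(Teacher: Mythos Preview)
Your core argument---precompose the meromorphic continuation from Theorem \ref{thm:PolarlocusShintani} with the linear map $\bs\mapsto B\bs$ and use $\langle\boldsymbol\mu_k^J,B\bs\rangle=\langle B^t\boldsymbol\mu_k^J,\bs\rangle$---is exactly the paper's approach; the paper's proof is literally the single sentence ``It is immediate from Theorem \ref{thm:PolarlocusShintani}.''

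However, your additional treatment of the singular case contains an error. You claim that the image $B(\C^n)$ cannot lie entirely in a polar hyperplane because ``the polar hyperplanes are affine, not linear, and $0$ is not a pole.'' This is false: Theorem \ref{thm:PolarlocusShintani} allows $l=\nu_k^J$, which gives the \emph{linear} hyperplane $\langle\boldsymbol\mu_k^J,\bw\rangle=0$ passing through the origin (the exclusion in the last sentence of the theorem only applies when $\boldsymbol\mu_k^J$ is a canonical basis vector). Indeed, Section \ref{sec:FamilyofM} is devoted to the germs of $\zeta_A$ \emph{at zero}, precisely because zero can lie on the polar locus. So if $B^t\boldsymbol\mu_k^J=0$ for some $\boldsymbol\mu_k^J$ whose hyperplane through $0$ actually carries a pole, then $B(\C^n)$ sits inside that polar hyperplane and $\bs\mapsto\zeta_A(B\bs)$ need not be meromorphic. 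The paper does not address this either, so in going beyond the paper's one-line proof you have in fact uncovered a genuine edge case in the corollary as stated rather than resolved it; your proposed resolution does not work.
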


\begin{proof}
It is immediate from Theorem \ref{thm:PolarlocusShintani}. 
\end{proof}

The following corollary shows how the pole structure of a Shintani zeta function $\zeta_A$ associated to a matrix $A$ only depends on the positions of the zeros in the matrix. It does not depend on the values of the other arguments as long as they are positive. In particular, it is enough to consider the matrices $S\in \Sig$ with arguments $0$ or $1$ to study all possible configurations of the poles for Shintani zeta functions. This is in sharp contrast with the convergent case, since the values of $\zeta_A$ outside the poles depend strongly on the values of the coefficients of $A$. 

\begin{cor}\label{cor:A-squeleton}
Let $A\in\Sig$ and consider a new matrix $\bar A$ defined by $\bar a_{ij}=0$ if $a_{ij}=0$, and $\bar{a}_{ij}=1$ if $a_{ij}\neq0$. Then $\zeta_A$ and $\zeta_{\bar A}$ have the same pole structure.
\end{cor}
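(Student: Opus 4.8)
The plan is to reduce the statement entirely to Theorem \ref{thm:PolarlocusShintani}, which says that the polar locus of $\zeta_A$ is governed by the polyhedra $\Delta_{C_J}+\R^n_+$ for $J\subset[r]$: concretely, by the primitive inward facet normals $\boldsymbol\mu_k^J$, the integers $\nu_k^J$ from \eqref{eq:NewtonpoltoInfty}, and the knowledge of which of the $\boldsymbol\mu_k^J$ coincide with a canonical basis vector $e_i$. So it suffices to show that for each $J\subset[r]$ the polytope $\Delta_{C_J}$ built from $A$ coincides with the one built from $\bar A$; everything else follows verbatim. I would first record that $\bar A\in\Sig$, since turning a positive entry into $1$ and leaving zeros untouched preserves the requirement of having a positive entry in each row and each column.

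Next I would describe the support of $C_J$ explicitly. Expanding $C_J(\bep)=\prod_{j\in J}\sum_{i=1}^n a_{ij}\epsilon_i$ and collecting terms, the coefficient of a monomial $\bep^\alpha$ is $\sum_\sigma\prod_{j\in J}a_{\sigma(j)j}$, the sum being over maps $\sigma\colon J\to[n]$ with $\sum_{j\in J}e_{\sigma(j)}=\alpha$. Because all entries of $A$ are nonnegative, this sum has no cancellation, so $\bep^\alpha$ occurs in $C_J$ with nonzero coefficient if and only if there is such a $\sigma$ with $a_{\sigma(j)j}\neq 0$ for every $j\in J$. This condition depends only on the pattern of zeros of the columns $\{C_j\}_{j\in J}$, which by construction is identical for $A$ and for $\bar A$. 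Hence the supports of the respective $C_J$ agree, and therefore so do their Newton polytopes $\Delta_{C_J}$, and hence the polyhedra $\Delta_{C_J}+\R^n_+$.

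Consequently the facet normals $\boldsymbol\mu_k^J$ and the constants $\nu_k^J$ are the same for $A$ and $\bar A$, as is the set of pairs $(J,k)$ for which $\boldsymbol\mu_k^J=e_i$. Applying Theorem \ref{thm:PolarlocusShintani} to both matrices then yields the same family of candidate polar hyperplanes $\langle\boldsymbol\mu_k^J,\bs\rangle=\nu_k^J-l$ with the same admissible ranges of $l\in\Z_{\geq0}$, i.e.\ $\zeta_A$ and $\zeta_{\bar A}$ have the same pole structure. There is no serious obstacle here: the statement concerns only the pole locations, not the values of $\zeta_A$, and the sole point needing a little care is the no-cancellation observation, which rests precisely on the nonnegativity of the entries of a matrix in $\Sig$.
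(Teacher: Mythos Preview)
Your proof is correct and follows the same overall strategy as the paper: show that $\Delta_{C_J}=\Delta_{\bar C_J}$ for every $J\subset[r]$ and then invoke Theorem \ref{thm:PolarlocusShintani}. The only difference is in how you establish $\Delta_{C_J}=\Delta_{\bar C_J}$. You expand the product $C_J$ explicitly and use nonnegativity of the entries to rule out cancellation, concluding that the monomial supports (hence the Newton polytopes) coincide. The paper instead observes that for a single linear form $C_j$ the Newton polytope is simply $\mathrm{conv}\{e_i:a_{ij}\neq 0\}$, which manifestly depends only on the zero pattern, and then appeals to the standard fact that the Newton polytope of a product is the Minkowski sum of the factors' Newton polytopes, giving $\Delta_{C_J}=\sum_{j\in J}\Delta_{C_j}=\sum_{j\in J}\Delta_{\bar C_j}=\Delta_{\bar C_J}$. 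The paper's route is shorter and avoids the combinatorial expansion; your route is more self-contained in that it does not rely on the Minkowski-sum property, and it makes explicit where the nonnegativity hypothesis in the definition of $\Sig$ is used.
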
 
\begin{proof}
Let $C_j$ (resp. $\bar C_j$) be the columns of the matrix $A$ (resp. $\bar A$). Once again with some abuse of notation, we denote by the same symbol the linear forms $C_j(\bep)=\lan\bep,C_j\ran$ and $\bar C_j(\bep)=\lan\bep,\bar C_j\ran$.  Since $C_j(\epsilon)=\sum_{i=1}^na_{ij}\epsilon_i$ and $\bar C_j(\epsilon)=\sum_{i=1}^n\bar a_{ij}\epsilon_i$ where $a_{ij}=0$ if, and only if $\bar a_{ij}=0$, it follows from the definition of Newton polytope (see \eqref{eq:notapoly}) that $\Delta_{C_j}=\Delta_{\bar C_j}$. Since the Newton polytope of the product of polynomials is equal to the Minkowski sum of their Newton polytopes, it follows that $\Delta_{C_J}=\Delta_{\bar C_J}$ for any $J\subset[r]$. Theorem \ref{thm:PolarlocusShintani} then yields the result.
 
\end{proof}

\section{Family of meromorphic germs spanned by the Shintani zeta functions}\label{sec:FamilyofM}

In this paragraph, we study the space of meromorphic germs at zero spanned by the Shintani zeta functions. For that purpose, we give a better description of the vectors $\mu_K^J$ obtained in Theorem \ref{thm:PolarlocusShintani}.  The main result of this section is Theorem \ref{thm:Feynman-mu} which states that the arguments of the vectors $\mu_K^J$ are either $0$ or $1$ if represented in the canonical basis of $\R^n$. This proves that the poles at zero are similar to the ones of generic Feynman amplitudes studied in $\cite{S, DZ}$. However, the family of meromorphic germs spanned by those containing Shintani zeta functions is bigger that the one spanned by those containing generic Feynman amplitudes, since the latter are described by singular families (see \cite{S}).

\begin{prop}\label{prop:from-mu-to-Amu}
For every set of vectors $S=\{\bmu_1\cdots\bmu_m\}$ in $\{0,1\}^n$ such that each vector $\bmu_j$ has at least two of its arguments different from zero, there is a matrix $A_S\in\Sig$ such that the meromorphic continuation of $\zeta_{A_S}$ has all its poles located at the hyperplanes $\lan\bmu_j,s\ran=0$ for every $1\leq j\leq m$.
\end{prop}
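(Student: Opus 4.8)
The plan is to build the matrix $A_S$ explicitly from the set $S=\{\bmu_1,\dots,\bmu_m\}$ and then apply Theorem \ref{thm:PolarlocusShintani} to read off the poles. The key observation is that by Theorem \ref{thm:PolarlocusShintani} (together with Corollary \ref{cor:A-squeleton}) the pole-carrying hyperplanes of $\zeta_{A}$ are governed entirely by the inward normal vectors $\bmu_k^J$ to the facets of the polyhedra $\Delta_{C_J}+\R^n_+$, where $J$ ranges over subsets of $[r]$ and $C_J=\prod_{j\in J}C_j$. So the task reduces to the following: design columns $C_1,\dots,C_r\in\CC_n$ so that (i) among all facet normals of all the $\Delta_{C_J}+\R^n_+$ one finds exactly the vectors $\bmu_1,\dots,\bmu_m$ (up to those forced extra normals of the form $e_i$, which by the last part of Theorem \ref{thm:PolarlocusShintani} contribute \emph{no} pole on the hyperplane $\lan e_i,\bs\ran=0$ since there $l=0<\nu_k^J$ fails only if $\nu_k^J\le 0$ — one must check $\nu_k^J\geq 1$ for these), and (ii) each genuine normal $\bmu_j$ has associated integer $\nu_k^J$ equal to $0$, so that the corresponding hyperplane passing through the origin, namely $\lan\bmu_j,\bs\ran=\nu_k^J-l$ with $l=\nu_k^J$, is actually a pole.

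First I would treat the simplest case $r=1$, a single column. If $C_1=\bmu$ is a $0/1$ vector with support of size $\geq 2$, then $\Delta_{C_1}$ is a single point (the vertex $\bmu\in\Z^n$), so $\Delta_{C_1}+\R^n_+$ is a translated orthant $\bmu+\R^n_{\geq 0}$, whose facets have inward normals exactly $e_i$ for $i$ in the support of $\bmu$, with $\nu=\bmu_i=1>0$; these give no pole at the origin. That is the wrong shape — a single column never produces $\bmu$ itself as a normal. Hence one needs several columns and must use the Minkowski-sum structure: $\Delta_{C_J}=\sum_{j\in J}\Delta_{C_j}$. The natural idea is to take $r=m$ and $C_j=\bmu_j$ for each $j$, so that for $J=[m]$, $\Delta_{C_{[m]}}=\sum_j \Delta_{\bmu_j}=\{\sum_j\bmu_j\}$, still a point — again only orthant normals. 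The resolution: one should \emph{not} put each $\bmu_j$ as a bare column but rather force $\Delta_{C_j}$ to be a segment (or higher-dimensional polytope) whose Minkowski sums have the $\bmu_j$ as facet normals. Concretely, for a $0/1$ vector $\bmu$ with support $\{i_1<\dots<i_t\}$, $t\geq 2$, the polytope $\Delta+\R^n_+$ has $\bmu$... wait — actually the cleanest construction: take the column whose entries realize the standard simplex-type normal. I would instead take, for each $j$, a \emph{pair} of columns, or enlarge the matrix, so that $\Delta_{C_J}$ for the relevant $J$ is a simplex $\mathrm{conv}\{e_i:i\in\mathrm{supp}(\bmu_j)\}$ (sum of standard basis vectors thought of as points), whose polyhedron $\mathrm{conv}\{e_i\}+\R^n_+$ has a facet with inward normal $\sum_{i\in\mathrm{supp}(\bmu_j)}e_i=\bmu_j$ and $\nu=1$. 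To push $\nu$ down to $0$ one then translates: replace the columns' linear forms by ones whose Newton polytope simplex has a vertex at the origin, e.g. use the columns $e_{i_1},\dots,e_{i_t}$ themselves (each a standard basis vector), giving $C_{\{i_1,\dots,i_t\}}=\epsilon_{i_1}\cdots\epsilon_{i_t}$ with Newton polytope the single point $\sum e_{i}$ — no. The genuinely workable route is: include a column equal to $e_{i_1}+\cdots+e_{i_t}$ \emph{and} the single-variable columns $e_{i_1},\dots,e_{i_t}$; then $\Delta_{C_j}$ for $C_j=\epsilon_{i_1}+\cdots+\epsilon_{i_t}$ is the simplex $\mathrm{conv}\{e_{i_1},\dots,e_{i_t}\}$, and $\Delta_{C_j}+\R^n_+$ has precisely the facet with inward normal $\bmu_j$ and offset $\nu=1$, plus orthant facets. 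To kill the "$1$" and land on the origin I would take instead of $s$ the shifted variable — but that is not allowed since we must keep $\zeta_A$ of the stated form; instead I accept $\nu=1$ and invoke that the relevant pole hyperplane from Theorem \ref{thm:PolarlocusShintani} is $\lan\bmu_j,\bs\ran = \nu - l = 1-l$, which for $l=1$ is exactly $\lan\bmu_j,\bs\ran=0$, and $l=1\in\Z_{\geq 0}$ is permitted and (since $\bmu_j\neq e_i$, as $\bmu_j$ has $\geq 2$ nonzero entries) the last clause of the theorem does not exclude it. So the pole \emph{is} there.

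Thus the concrete construction I would record: let $A_S$ be the $n\times(m+\text{(size corrections)})$ matrix whose columns are the $\bmu_1,\dots,\bmu_m$ together with enough standard basis vectors $e_i$ (for $i$ in $\bigcup_j\mathrm{supp}(\bmu_j)$, and if needed to meet the $\Sig$ condition, one for each remaining row/column) to guarantee $A_S\in\Sigma_{n\times r}(\R_{\geq 0})$ — i.e. no zero row, no zero column. Then the steps are: (1) check $A_S\in\Sig$; (2) compute, for each $J\subset[r]$, the Newton polytope $\Delta_{C_J}$ as a Minkowski sum of points and simplices, and determine the facet normals of $\Delta_{C_J}+\R^n_+$ — I claim they are all of the form $e_i$ or $\bmu_j$ (this is the combinatorial heart), because a Minkowski sum of a point-set and coordinate simplices has normal fan refining the fan generated by the $e_i$ and the $\bmu_j$; (3) apply Theorem \ref{thm:PolarlocusShintani}: poles on $\lan\bmu_k^J,\bs\ran=\nu_k^J-l$; for $\bmu_k^J=e_i$ we get $\nu_k^J\geq 1$ and the excluding clause removes the origin; for $\bmu_k^J=\bmu_j$ we get $\nu=1$ (one must verify $\nu=1$ exactly, not larger, which follows since each $\bmu_j$ appears as a genuine column and the simplex $\mathrm{conv}\{e_i:i\in\mathrm{supp}\bmu_j\}$ touches $\lan\bmu_j,\cdot\ran=1$), hence the hyperplane $\lan\bmu_j,\bs\ran=0$ is a pole with $l=1$; (4) conclude the only poles at the origin are exactly the $\lan\bmu_j,\bs\ran=0$. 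The main obstacle I anticipate is step (2)–(3): controlling precisely \emph{which} facet normals arise from \emph{all} the Minkowski sums $\Delta_{C_J}$ and verifying that no unwanted normal with offset $\nu=0$ sneaks in for some $J$ — this is exactly the kind of book-keeping over subsets for which the weight-distribution algorithm of Section \ref{sec:FamilyofM}/Section 5 (Corollary \ref{cor:algorithm}) and Theorem \ref{thm:Feynman-mu} are designed, so I would lean on those to certify that every facet normal of every $\Delta_{C_J}+\R^n_+$ is a $0/1$ vector and to pin down its offset.
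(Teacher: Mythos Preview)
Your proposal eventually arrives at essentially the paper's construction, but it contains an early error that sends you on a detour, and it then attempts to prove more than the proposition asks.

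\textbf{The error.} When you set $C_1=\bmu$ and write ``$\Delta_{C_1}$ is a single point (the vertex $\bmu\in\Z^n$)'', you have confused the column vector with a monomial exponent. By the paper's convention, $C_1$ denotes the \emph{linear form} $C_1(\bep)=\langle\bmu,\bep\rangle=\sum_{i\in{\rm Supp}(\bmu)}\epsilon_i$, whose Newton polytope is the simplex ${\rm conv}\{e_i:i\in{\rm Supp}(\bmu)\}$, not the point $\bmu$. You implicitly correct this a few lines later, but the detour through ``pairs of columns'' and ``translating to push $\nu$ down to $0$'' is unnecessary. With the correct polytope, $\Delta_{C_j}+\R^n_+$ already has exactly the facet normals $e_i$ (for $i\notin{\rm Supp}(\bmu_j)$, with $\nu=0$) and $\bmu_j$ (with $\nu=1$); this is precisely Lemma~\ref{lem:poles-1-column}. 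Then Theorem~\ref{thm:PolarlocusShintani} with $l=1$ puts a (possible) pole on $\langle\bmu_j,\bs\rangle=0$, and since $\bmu_j$ has at least two nonzero entries the exclusion clause for $e_i$-normals does not apply. That is the entire argument.

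\textbf{The over-reach.} Your steps (2)--(4), where you worry about controlling the facet normals of \emph{all} $\Delta_{C_J}+\R^n_+$ and invoking the weight-distribution machinery to rule out ``unwanted'' normals, are not needed. The proposition, as the paper proves it, only asserts that $\zeta_{A_S}$ \emph{has} poles on each hyperplane $\langle\bmu_j,\bs\rangle=0$; it does not claim these are the only poles (indeed, for the paper's construction the all-ones column typically produces additional pole hyperplanes). So you should simply look at the singletons $J=\{j\}$ and stop.

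\textbf{Comparison of constructions.} Your filler columns are standard basis vectors $e_i$ to avoid zero rows; the paper instead appends a single all-ones column. Both guarantee $A_S\in\Sig$ and both work, but the paper's choice is cleaner: one extra column suffices regardless of $n$ and $m$.
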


\begin{proof}
For a set $S$ as in the statement, consider the $n\times (m+1)$ matrix $A_{S}$ where $\bmu_j$ is the $j$-th column for $1\leq j\leq m$ and the last column is full of ones. We claim that $\zeta_{A_S}$ has poles at $\lan\bmu_j,s\ran=0$ for every $j$. Indeed, $\Delta_{\lan\bmu_j,\bep\ran}+\R^n_+$ is the intersection of the half spaces $\lan e_i,\bsigma\ran>0$ and $\lan\bmu_j,\bsigma\ran>1$, where $\{e_i\}_{i=1}^n$ is the canonical basis of $\R^n$. The result then follows from Theorem \ref{thm:PolarlocusShintani}.
\end{proof}

\begin{rk}
Notice that in the previous Proposition, the case where $\bmu$ is an element of the canonical basis $\{e_i\}_{i=1}^n$ is not considered. This is a consequence of Theorem \ref{thm:PolarlocusShintani} where the hyperplanes of the type $\lan e_i,\bs\ran=\nu_k^J-l$ can carry poles only when $0\leq l\leq\nu_k^J-1$. The reason for this is that $\zeta_A=\frac{\M_{\S(A)}}{\Gamma}$ (see Proposition \ref{prop:Shintani_abs_conv_domain}), and therefore the poles of the Gamma functions cancel those of $\M_{\S(A)}$ lying on the hyperplanes $\lan e_i,\bs\ran=l$ with $l\in\Z_{\leq0}$. 
\end{rk}

The following results aim to prove a statement converse to that on Proposition \ref{prop:from-mu-to-Amu}.
For this purpose, let us introduce some notation first. For any subset $I$ of $[n]$, we denote by $\pi_I:\R^n\to\R^{n-|I|}$ the projection orthogonal to the subspace $\bigoplus_{i\in I}\R(e_i)$. We also denote by $\iota_I:\R^{n-|I|}\to\R^n$ the injection map such that $Im(\iota_I)=\bigoplus_{i\in [n]\setminus I}\R(e_i)$ and \[\pi_I\circ\iota_I={\rm Id}_{\R^{n-|I|}}.\]
Notice also that, when restricted to $\bigoplus_{i\in[n]\setminus I}\R(e_i)\subset \R^n$, \begin{equation}\label{eq:rightinverserestricted}\iota_I\circ\pi_I\vert_{\bigoplus_{i\in[n]\setminus I}\R(e_i)}={\rm Id}_{\bigoplus_{i\in[n]\setminus I}\R(e_i)}.
\end{equation} 
For example, if $n=4$ and $I=\{2,4\}$, then $\pi_I((v_1,v_2,v_3,v_4))=(v_1,v_3)$, and $\iota_I((w_1,w_2))=(w_1,0,w_2,0)$.

\begin{lem}\label{lem:HyperplanesofNewPol} 
Let $D$ be a subset of $\R^n_+$ such that $D+\R_+^n=\iota_I(\pi_I(D+\R_+^n))+\sum_{i\in I}\R_+(e_i)$. If there is a $\hat\bmu\in\R^{n-|I|}$ such that \[\pi_I(D+\R_+^n)=\R^{n-|I|}_+\cap\{\tilde\bsigma\in\R^{n-|I|}: \lan\tilde\bsigma,\hat\bmu\ran>1\},\] then \begin{equation}\label{eq:hyperplaneInt}D+\R_+^n=\R^{n}_+\cap\{\bsigma\in\R^{n}: \lan\bsigma,\bmu\ran>1\},\end{equation}
where $\bmu=\iota_I(\hat\bmu)$, i.e, $\pi(\bmu)=\hat\bmu$ and $\mu_i=0$ for every $i\in I$.
\end{lem}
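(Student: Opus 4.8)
The statement is essentially a bookkeeping fact about how the half-space description of a polyhedron behaves under adding coordinate rays, so the plan is to unwind the hypotheses and chase inclusions in both directions. First I would set $P := D + \R_+^n$ and record the two hypotheses: (i) $P = \iota_I(\pi_I(P)) + \sum_{i\in I}\R_+(e_i)$, and (ii) $\pi_I(P) = \R^{n-|I|}_+ \cap \{\tilde\bsigma : \lan\tilde\bsigma,\hat\bmu\ran > 1\}$. With $\bmu = \iota_I(\hat\bmu)$, the key algebraic observation to establish up front is that for any $\bsigma\in\R^n$ one has $\lan\bsigma,\bmu\ran = \lan\pi_I(\bsigma),\hat\bmu\ran$, because $\bmu$ has zero components on the coordinates in $I$; this is immediate from the definitions of $\pi_I$, $\iota_I$ and the canonical inner product. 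Similarly $\bsigma\in\R^n_+$ forces $\pi_I(\bsigma)\in\R^{n-|I|}_+$ (dropping coordinates of a positive vector leaves a positive vector).

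\textbf{The inclusion $\subseteq$.} Take $\bsigma\in P$. Then $\bsigma\in\R^n_+$ (since $P = D + \R_+^n$ with $D\subset\R^n_+$, so in fact $P\subset\R^n_+$ — I would note this at the outset). Applying $\pi_I$ and using that $\pi_I(P) = \pi_I(\iota_I(\pi_I(P))) + 0 = \pi_I(P)$ more usefully: from hypothesis (i), $\pi_I(\bsigma)\in\pi_I(P)$, hence by (ii) $\lan\pi_I(\bsigma),\hat\bmu\ran > 1$, i.e.\ $\lan\bsigma,\bmu\ran > 1$ by the observation above. So $\bsigma\in\R^n_+\cap\{\lan\bsigma,\bmu\ran>1\}$.

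\textbf{The inclusion $\supseteq$.} Take $\bsigma\in\R^n_+$ with $\lan\bsigma,\bmu\ran>1$. Write $\bsigma = \iota_I(\pi_I(\bsigma)) + \bsigma_I$ where $\bsigma_I := \sum_{i\in I}\sigma_i e_i$ is the part of $\bsigma$ supported on $I$; since $\bsigma\in\R^n_+$ we get $\sigma_i>0$ for all $i$, so $\bsigma_I\in\sum_{i\in I}\R_+(e_i)$ and $\pi_I(\bsigma)\in\R^{n-|I|}_+$. Moreover $\lan\pi_I(\bsigma),\hat\bmu\ran = \lan\bsigma,\bmu\ran > 1$, so $\pi_I(\bsigma)\in\pi_I(P)$ by (ii). Therefore $\iota_I(\pi_I(\bsigma))\in\iota_I(\pi_I(P))$, and adding $\bsigma_I$ and invoking hypothesis (i) gives $\bsigma = \iota_I(\pi_I(\bsigma)) + \bsigma_I \in \iota_I(\pi_I(P)) + \sum_{i\in I}\R_+(e_i) = P$. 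Combining the two inclusions yields \eqref{eq:hyperplaneInt}.

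\textbf{Anticipated obstacle.} There is no deep difficulty here; the only thing to be careful about is the open/closed and strict/non-strict conventions (the paper uses $\R^n_+$ for the \emph{open} orthant and writes strict inequalities $\lan\cdot,\bmu\ran>1$), and the exact compatibility $\pi_I\circ\iota_I = \mathrm{Id}$ versus the one-sided identity \eqref{eq:rightinverserestricted}. I would make sure every use of $\iota_I$ and $\pi_I$ only relies on $\pi_I\circ\iota_I=\mathrm{Id}_{\R^{n-|I|}}$ on the one hand and on the decomposition $\bsigma = \iota_I(\pi_I(\bsigma)) + \bsigma_I$ valid for all $\bsigma\in\R^n$ on the other; both are stated in the excerpt. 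The rest is the two symmetric inclusion-chases above.
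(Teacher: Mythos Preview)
Your proof is correct and follows essentially the same approach as the paper's: both arguments hinge on the identity $\lan\bsigma,\bmu\ran = \lan\pi_I(\bsigma),\hat\bmu\ran$ (valid because $\mu_i=0$ for $i\in I$), and then chase the two inclusions exactly as you do. Your write-up is in fact slightly more explicit than the paper's in spelling out the decomposition $\bsigma = \iota_I(\pi_I(\bsigma)) + \bsigma_I$ for the reverse inclusion, but the underlying logic is identical.
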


\begin{proof}
We prove the inclusion from left to right in \eqref{eq:hyperplaneInt}. For $\bsigma\in D+\R^n_+$, it clearly implies $\bsigma\in \R^n_+$. Notice that, for $\tilde\bsigma:=\pi_I(\bsigma)$, we have $\lan\tilde\bsigma,\hat\bmu\ran=\lan\bsigma,\bmu\ran$ since $\pi_I(\bmu)=\hat\bmu$ and $\mu_i=0$ for every $i\in I$. It follows that $\lan\bsigma,\bmu\ran>1$ and the inclusion is demonstrated.

We prove the inclusion from right to left in \eqref{eq:hyperplaneInt}. Let $\bsigma\in\R^n_+$ such that $\lan\bsigma,\bmu\ran>1$, we claim $\bsigma\in\iota_I(\pi_I(D+\R_+^n))+\sum_{i\in I}\R_+(e_i).$ Indeed, it follows from $\lan\tilde\bsigma,\hat\bmu\ran=\lan\bsigma,\bmu\ran$, and since $\bsigma\in\R^n_+$ its $i$-th  coordinate is positive for every $i\in I$. The result follows from the equality $D+\R_+^n=\iota_I(\pi_I(D+\R_+^n))+\sum_{i\in I}\R_+(e_i)$.
\end{proof}

Recall that for a column vector $C_j$ we denote, with some abuse of notation, by the same symbol the polynomial $C_j(\bep)=\lan\bep,C_j\ran$, and then its Newton polytope $\Delta_{C_j}$ is well defined.
 
\begin{lem}\label{lem:poles-1-column}
Let $A$ be a matrix in $\Sig$, with $C_j$ the columns of $A$. For each $j\in[r]$, consider the vector $\bmu_j\in \{0,1\}^n$ with its $i$-th coordinate equal to zero if, and only if, the $i$-th element of the column $C_j$ is a zero. Then 
\begin{equation}\label{eq:muj-for-j-unitary}
\Delta_{C_j}+\R^n_+=\bigcap_{i\in [n]}\{\bsigma\in\R^{n}: \lan\bsigma,e_i\ran>0\}\cap\{\bsigma\in\R^{n}: \lan\bsigma,\bmu\ran>1\}=\R^n_+\cap\{\bsigma\in\R^{n}: \lan\bsigma,\bmu_j\ran>1\}.
\end{equation}
\end{lem}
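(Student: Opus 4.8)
The plan is to prove Lemma~\ref{lem:poles-1-column} directly from the definition of the Newton polytope of the linear form $C_j$, combined with the structural statement of Lemma~\ref{lem:HyperplanesofNewPol}. First I would record that, since $C_j(\bep)=\sum_{i=1}^n a_{ij}\epsilon_i$ is a linear form, the set $\mathcal A$ attached to it (in the sense of \eqref{eq:notapoly}) is exactly $\{e_i : a_{ij}\neq 0\}$, so that $\Delta_{C_j}=\mathrm{conv}\{e_i : a_{ij}\neq 0\}$. Writing $I_j:=\{i\in[n]: a_{ij}=0\}$ for the set of indices where the column vanishes, $\Delta_{C_j}$ lies in the affine simplex $\{x\in\R^n_{\geq0}:\sum x_i=1,\ x_i=0\ \forall i\in I_j\}$, and in fact equals the full face of the standard simplex supported on the coordinates outside $I_j$. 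Note that because $A\in\Sig$ every column has at least one positive entry, so $\Delta_{C_j}\neq\emptyset$; moreover $\bmu_j$ as defined has at least one nonzero coordinate.

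Next I would verify the hypotheses of Lemma~\ref{lem:HyperplanesofNewPol} with $D=\Delta_{C_j}$ and $I=I_j$. The key point is that $\Delta_{C_j}$ is ``constant in the $I_j$-directions,'' i.e.\ $\Delta_{C_j}+\R^n_+ = \iota_{I_j}\big(\pi_{I_j}(\Delta_{C_j}+\R^n_+)\big)+\sum_{i\in I_j}\R_+(e_i)$; this holds because none of the vertices $e_i$ of $\Delta_{C_j}$ has a nonzero $I_j$-component, so translating by positive amounts in the $I_j$-coordinates moves inside the polyhedron and the polyhedron splits as a product along the coordinate decomposition $\R^n=\bigoplus_{i\notin I_j}\R e_i\oplus\bigoplus_{i\in I_j}\R e_i$. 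Then I would compute $\pi_{I_j}(\Delta_{C_j}+\R^n_+)$: the projection of $\Delta_{C_j}$ is the full standard simplex $\hat\Delta$ in $\R^{n-|I_j|}$ on the surviving coordinates, and one checks the elementary identity $\hat\Delta+\R^{n-|I_j|}_+ = \R^{n-|I_j|}_+\cap\{\tilde\bsigma:\langle\tilde\bsigma,\hat\bmu\rangle> 1\}$ where $\hat\bmu=(1,\dots,1)$. (This is exactly the ``Continuation of Example'' computation in low dimension, generalized: the facets of $\hat\Delta+\R^{m}_+$ are the $m$ coordinate hyperplanes together with the single slanted facet $\sum\tilde\sigma_i=1$.) With $\hat\bmu=(1,\dots,1)\in\R^{n-|I_j|}$ we have $\iota_{I_j}(\hat\bmu)=\bmu_j$, and Lemma~\ref{lem:HyperplanesofNewPol} immediately gives $\Delta_{C_j}+\R^n_+=\R^n_+\cap\{\bsigma:\langle\bsigma,\bmu_j\rangle>1\}$, which is precisely the right-hand equality in \eqref{eq:muj-for-j-unitary}; the middle equality is just the rewriting $\R^n_+=\bigcap_{i\in[n]}\{\langle\bsigma,e_i\rangle>0\}$.

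The only genuinely non-routine step is establishing the elementary polytope identity $\hat\Delta+\R^m_+=\R^m_+\cap\{\langle\tilde\bsigma,(1,\dots,1)\rangle>1\}$ for the standard simplex $\hat\Delta=\mathrm{conv}\{e_1,\dots,e_m\}$; I expect this to be the main obstacle only in the sense of needing a clean argument rather than being deep. The inclusion $\subseteq$ is clear since every point of $\hat\Delta$ has coordinate sum $1$ and adding a strictly positive vector keeps coordinates positive and raises the sum above $1$. For $\supseteq$, given $\tilde\bsigma\in\R^m_+$ with $\sum\tilde\sigma_i>1$, one picks any vertex $e_k$ with $\tilde\sigma_k>0$ (such $k$ exists) and writes $\tilde\bsigma = x + t e_k$ where $t>0$ is small and $x\in\hat\Delta$; more robustly, one normalizes: set $x:=\tilde\bsigma/\sum\tilde\sigma_i\in\hat\Delta$ and observe $\tilde\bsigma - x = (1-1/\sum\tilde\sigma_i)\,\tilde\bsigma\in\R^m_+$ since the scalar is in $(0,1)$ and $\tilde\bsigma\in\R^m_+$, hence $\tilde\bsigma=x+(\tilde\bsigma-x)\in\hat\Delta+\R^m_+$. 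One should be slightly careful at the boundary where some $\tilde\sigma_i=0$, but the argument above works verbatim since $\R^m_+$ here denotes the closed-in-the-relevant-sense orthant entering the polyhedral description; alternatively, since $\Delta_{C_j}+\R^n_+$ is defined by strict inequalities in the paper's convention, the normalization argument gives membership directly. Having pinned down this identity, the rest is bookkeeping with $\pi_I$, $\iota_I$ and the product structure, all of which is handed to us by the two preceding lemmas and the definition of Newton polytope.
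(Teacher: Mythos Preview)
Your proof is correct and follows essentially the same approach as the paper: reduce to the full standard simplex in $\R^{n-|I_j|}$ via the projection $\pi_{I_j}$, establish the identity $\hat\Delta+\R^m_+=\R^m_+\cap\{\langle\tilde\bsigma,(1,\dots,1)\rangle>1\}$ there, and then lift back using Lemma~\ref{lem:HyperplanesofNewPol}. The only cosmetic difference is that the paper separates the case $I_j=\emptyset$ as a standalone ``Case~1'' and then reduces the general case to it, whereas you handle both at once and supply an explicit normalization argument $\tilde\bsigma\mapsto\tilde\bsigma/\sum\tilde\sigma_i$ for the simplex identity that the paper leaves implicit.
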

\begin{rk}
Notice that the first set of intersections in middle term in equation \eqref{eq:muj-for-j-unitary} amount to $\R^n_+$, and therefore the middle and last terms are trivially equal. This formulation will be useful for Lemma \ref{lem:poles-several-comluns-inclusion}. 
\end{rk}
\begin{proof}
We analyze two different cases:
\begin{enumerate}
\item Assume that $C_j(\bep)=\sum_{i=1}^n a_{ij}\epsilon_j$ is such that $a_{ij}\neq0$ for every $i$. Then 
\[\Delta_{C_j}=\Big\{\bsigma=\sum_{i\in[n]}\lambda_i e_i\in\R^n_+:\sum_{i\in n}\lambda_i=1\Big\}\]is a $n-1$ dimensional simplex. Setting $\bmu=(\underbrace{1,\cdots,1}_{n-{\rm times}})$ is follows that
\[\Delta_{C_j}+\R^n_+=\R^n_+\cap\{\bsigma\in\R^{n}: \lan\bsigma,\bmu_j\ran>1\},\]
and the result follows.

\item Consider now the case where $a_{ij}=0$ for $i\in I_j$ where $I_j\subset[n]$ and $I_j\neq[n]$. It implies that $\Delta_{C_j}=\iota_{I_j}(\pi_{I_j}(\Delta_{C_j}))$. In words it means that $\Delta_{C_j}$ is contained in the subspace $\bigoplus_{i\in[n]\setminus I_j}\R_+(e_i)\subset\R^n$, and thus \begin{equation}\label{eq:2ndhypforlastlem}\Delta_{C_j}+\R^n_+=\Delta_{C_j}+\sum_{i\in[n]\setminus I_j}\R_+(e_i)+\sum_{i\in I_j}\R_+(e_i)=\iota_{I_j}(\pi_{I_j}(\Delta_{C_j}+\R^n_+))+\sum_{i\in I_j}\R_+(e_i),
\end{equation} 
where we used \eqref{eq:rightinverserestricted} and the fact that $\sum_{i\in I_j}\R_+(e_i)\subset \ker(\pi_{I_j})$. Notice also that $\pi_{I_j}(\Delta_{C_j})$ is the Newton polytope of $\R^{n-|I_j|}\ni\bep\mapsto p(\bep)=\sum_{i=1}^{n-|I_j|}\epsilon_i$, then the previous case implies that \begin{equation}\label{eq:1sthypforlastlem}\pi_{I_j}(\Delta_{C_j})+\R^{n-|I_j|}_+=\R^{n-|I_j|}_+\cap \{\bsigma\in\R^{n-|I_j|}: \lan\bsigma,\hat\bmu\ran>1\}\end{equation} with $\hat\bmu=(\underbrace{1,\cdots,1}_{n-|I_j|\, {\rm times}})$. The result follows from \eqref{eq:2ndhypforlastlem}, \eqref{eq:1sthypforlastlem} and Lemma \ref{lem:HyperplanesofNewPol} with $D=\Delta_{C_j}$. 
\end{enumerate}

The case where all $a_{ij}=0$ is not considered since $A\in\Sig$ implies that each line and each row has at least an argument different from zero (Definition \ref{defn:Ashintani}).
\end{proof}

We proceed to generalize Lemma \ref{lem:poles-1-column} for $\Delta_{C_J}+\R^n_+$ where $|J|>1$. To that end we introduce some notations: Let $\bmu\in\{0,1\}^n$, we call the {\bf support} of $\bmu$, denoted by ${\rm Supp}(\bmu)$, the subset of $[n]$ given by \[{\rm Supp}(\bmu):=\{i\in[n]:\mu_i\neq0\}.\]
Notice that for every subset $S$ of $[n]$ there is a unique vector $\bmu\in\{0,1\}^n$ such that ${\rm Supp}(\bmu)=S$. 
For $A$ a matrix in $\Sig$, and $J\subset[r]$, we denote by $\bmu_J$ the only vector in $\{0,1\}^n$ which satisfies ${\rm Supp}(\bmu_J)=\bigcup_{j\in J}{\rm Supp}(\mu_j)$. In particular if $J=\{j\}$, then $\bmu_j=\bmu_{\{j\}}$. We also write $S_j$ short for ${\rm Supp}(\bmu_j)$ and $S_J$ short for ${\rm Supp}(\bmu_J)$.

With the notations previously introduced, Lemma \ref{lem:poles-1-column} implies that
\begin{equation}\label{eq:Supp-Sum-unitary}\bsigma\in \Delta_{C_j}+\R^n_+\Leftrightarrow \bsigma\in\R^n_+\,\wedge\,\sum_{i\in S_j}(\bsigma)_i>1,
\end{equation}
where $(\bsigma)_i$ denotes the $i$-th coordinate of the vector $\bsigma$.

\begin{lem}\label{lem:poles-several-comluns-inclusion}
Let $A$ be a matrix in $\Sig$ and $J\subset[r]$. Then
\[\Delta_{C_J}+\R^n_+\subset\Big(\bigcap_{K\subset J\,|\, K\neq\emptyset}\{\bsigma\in\R^n:\lan\bsigma,\bmu_K\ran>|K|\}\Big)\cap\R^n_+,\]
where $C_J(\bep)=\prod_{j\in J}\lan\bep,C_j\ran$ as before.
\end{lem}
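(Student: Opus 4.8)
## Proof plan for Lemma \ref{lem:poles-several-comluns-inclusion}

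The plan is to work directly with the characterization of membership in $\Delta_{C_j}+\R^n_+$ provided by \eqref{eq:Supp-Sum-unitary}, combined with the fact that the Newton polytope of a product of polynomials is the Minkowski sum of the Newton polytopes of the factors. First I would recall that $C_J(\bep)=\prod_{j\in J}C_j(\bep)$, hence $\Delta_{C_J}=\sum_{j\in J}\Delta_{C_j}$ (Minkowski sum), and therefore $\Delta_{C_J}+\R^n_+=\sum_{j\in J}(\Delta_{C_j}+\R^n_+)$ using that $\R^n_+ + \R^n_+ = \R^n_+$ and that $|J|\ge 1$. So any $\bsigma\in\Delta_{C_J}+\R^n_+$ can be written as $\bsigma=\sum_{j\in J}\bsigma_j$ with each $\bsigma_j\in\Delta_{C_j}+\R^n_+$.

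Next, fix a nonempty $K\subset J$. By \eqref{eq:Supp-Sum-unitary}, for each $j\in K$ we have $\bsigma_j\in\R^n_+$ and $\sum_{i\in S_j}(\bsigma_j)_i>1$. Since $S_j\subset S_K$ (by definition of $\bmu_K$ via ${\rm Supp}(\bmu_K)=\bigcup_{j\in K}S_j$) and all coordinates of $\bsigma_j$ are positive, we get $\sum_{i\in S_K}(\bsigma_j)_i \ge \sum_{i\in S_j}(\bsigma_j)_i > 1$ for each $j\in K$, i.e. $\lan\bsigma_j,\bmu_K\ran>1$. The remaining summands $\bsigma_j$ for $j\in J\setminus K$ lie in $\R^n_+$, so they contribute $\lan\bsigma_j,\bmu_K\ran\ge 0$. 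Summing over all $j\in J$ yields $\lan\bsigma,\bmu_K\ran=\sum_{j\in J}\lan\bsigma_j,\bmu_K\ran > |K|$. Finally $\bsigma\in\R^n_+$ since it is a sum of elements of $\R^n_+$ (using $|J|\ge 1$). As $K$ was an arbitrary nonempty subset of $J$, this establishes the claimed inclusion.

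The one point that requires a little care, and which I expect to be the main (mild) obstacle, is the decomposition $\Delta_{C_J}+\R^n_+=\sum_{j\in J}(\Delta_{C_j}+\R^n_+)$: one must be careful that $\R^n_+$ is the \emph{open} orthant, so $\Delta_{C_j}+\R^n_+$ is already open, and that the Minkowski sum identity for Newton polytopes of products holds on the nose (which is a standard fact, already invoked in the proof of Corollary \ref{cor:A-squeleton} and in Theorem \ref{thm:thm2np-phi}). Everything else is a direct coordinate-wise estimate using only positivity of the coordinates and the containment $S_j\subset S_K$. Note that the statement is only an inclusion, not an equality, so there is no need to verify that every $\bsigma$ satisfying all the inequalities $\lan\bsigma,\bmu_K\ran>|K|$ actually lies in $\Delta_{C_J}+\R^n_+$ — that converse direction (which would fail in general, or require extra hypotheses on $A$) is presumably addressed in a subsequent lemma.
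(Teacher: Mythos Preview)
Your proposal is correct and follows essentially the same route as the paper's own proof: decompose $\Delta_{C_J}+\R^n_+$ as $\sum_{j\in J}(\Delta_{C_j}+\R^n_+)$ via the Minkowski-sum property of Newton polytopes, then apply \eqref{eq:Supp-Sum-unitary} to each summand and sum the resulting inequalities over $j\in J$. The only cosmetic differences are that the paper writes $\lan\bsigma_j,\bmu_K\ran>0$ (strict) for $j\in J\setminus K$ whereas you write $\ge 0$, and that you spell out the intermediate step $S_j\subset S_K$ explicitly; neither affects the argument.
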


\begin{proof}
Using the fact that the Newton polytope of a product of polynomials is equal to the Minkowski sum of the Newton polytopes of each of the polynomials, one sees that $\Delta_{C_J}=\sum_{j\in J}\Delta_{C_j}$. Furthermore, since $\R^n_+=\R^n_++\R^n_+$, we have $\Delta_{C_J}+\R^n_+=\sum_{j\in J}\big(\Delta_{C_j}+\R^n_+\big)$. We prove then the following inclusion which is equivalent to the one in the statement of the Lemma.
\begin{equation*}
\sum_{j\in J}\big(\Delta_{C_j}+\R^n_+\big)\subset\Big(\bigcap_{K\subset J\,|\, K\neq\emptyset}\{\bsigma\in\R^n:\lan\bsigma,\bmu_K\ran>|K|\}\Big)\cap\R^n_+.
\end{equation*}

For each $j\in J$, consider $\bsigma_j\in\Delta_{C_j}+\R^n_+$. It is clear that $\sum_{j\in J}\bsigma_j\in \R^n_+$ since every $\Delta_{C_j}\subset\R^n_+$. On the other hand, for $K\subset J$ with $K\neq\emptyset$, \eqref{eq:Supp-Sum-unitary} implies that 
 \[\lan\bsigma_j,\bmu_K\ran=\sum_{i\in S_K}(\bsigma_j)_i>1\] 
if $j\in K$.  Otherwise \[\lan\bsigma_j,\bmu_K\ran>0\] since $\bsigma_j\in\R^n_+$. The last two inequalities imply that $\lan\sum_{j\in J}\bsigma_j,\bmu_K\ran>|K|$, and thus $\sum_{j\in J}\bsigma_j\in\big(\bigcap_{K\subset J,\,K\neq\emptyset}\{\bsigma\in\R^n:\lan\bsigma,\bmu_K\ran>|K|\}\big)\cap\R^n_+$, which completes the inclusion.

\end{proof}

The next Proposition states that the inclusion in Lemma \ref{lem:poles-several-comluns-inclusion} is actually an equality. Its proof borrows tools from graph theory which will be discussed in Section \ref{sec:algorithm}.

\begin{prop}\label{prop:polesasFeynman}
Let $A$ be a matrix in $\Sig$ and $J\subset [r]$. Then
\begin{equation*}\label{eq:proofFeynmanpoles}\Delta_{C_J}+\R^n_+=\Big(\bigcap_{K\subset J\,|\, K\neq\emptyset}\{\bsigma\in\R^n:\lan\bsigma,\bmu_K\ran>|K|\}\Big)\cap\R^n_+.\end{equation*}
\end{prop}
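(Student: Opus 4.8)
The plan is to prove the remaining inclusion, i.e. that every $\bsigma$ in the polyhedron
\[
P_J:=\Big(\bigcap_{\emptyset\neq K\subset J}\{\bsigma\in\R^n:\lan\bsigma,\bmu_K\ran>|K|\}\Big)\cap\R^n_+
\]
lies in $\Delta_{C_J}+\R^n_+=\sum_{j\in J}(\Delta_{C_j}+\R^n_+)$, the reverse inclusion being Lemma \ref{lem:poles-several-comluns-inclusion}. By \eqref{eq:Supp-Sum-unitary} it suffices, given $\bsigma\in P_J$, to produce a decomposition $\bsigma=\sum_{j\in J}\bsigma_j$ with each $\bsigma_j\in\R^n_+$ and $\sum_{i\in S_j}(\bsigma_j)_i>1$; here $S_j={\rm Supp}(\bmu_j)$. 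In other words, I must split the ``mass'' of $\bsigma$ among the indices $j\in J$ so that the $j$-th piece puts strictly more than one unit of weight on the coordinates in $S_j$. Since the coordinates outside $\bigcup_{j\in J}S_j=S_J$ play no role in any of the constraints, they can be distributed arbitrarily (say, all to a single fixed $j_0\in J$), so the real problem is about the coordinates in $S_J$.

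The natural framework is a bipartite structure: on one side the elements $j\in J$, on the other the coordinates $i\in S_J$, with an edge $j\sim i$ whenever $i\in S_j$. The quantity $\lan\bsigma,\bmu_K\ran=\sum_{i\in S_K}(\bsigma)_i$ is the total weight sitting on the neighbourhood $S_K=\bigcup_{j\in K}S_j$ of the vertex set $K$, so the hypothesis $\bsigma\in P_J$ says exactly that for every $\emptyset\neq K\subset J$ the weight available on the neighbourhood of $K$ strictly exceeds $|K|$. This is precisely a (strict, weighted) Hall-type condition, and what I want is a feasible ``fractional assignment'' of the coordinate weights to the vertices $j$ respecting the adjacency and giving each $j$ a total $>1$. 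This is where Section \ref{sec:algorithm} enters: the weight-distribution algorithm and its Corollary \ref{cor:algorithm} are designed to take a graph together with a supply of weight on the vertices and redistribute it along edges so that every vertex ends up above a prescribed lower bound, precisely when a neighbourhood condition of the above form holds. So the plan is: (i) build the bipartite graph $G_J$ just described; (ii) translate $\bsigma\in P_J$ into the hypothesis of Corollary \ref{cor:algorithm} (each $i\in S_J$ carries weight $(\bsigma)_i$, each $j\in J$ requires final weight $>1$, and the neighbourhood bound $\sum_{i\in S_K}(\bsigma)_i>|K|$ is exactly what the corollary needs); (iii) invoke the corollary to obtain, for each edge $j\sim i$, a nonnegative amount $t_{ji}$ with $\sum_{j:\,i\in S_j} t_{ji}=(\bsigma)_i$ and $\sum_{i\in S_j} t_{ji}>1$; (iv) set $(\bsigma_j)_i:=t_{ji}$ for $i\in S_j$ and $0$ otherwise (dumping the leftover coordinates outside $S_J$ onto one chosen $j_0$), and check $\bsigma=\sum_{j\in J}\bsigma_j$ with $\bsigma_j\in\R^n_+$ and $\sum_{i\in S_j}(\bsigma_j)_i>1$; (v) conclude via \eqref{eq:Supp-Sum-unitary} that $\bsigma_j\in\Delta_{C_j}+\R^n_+$, hence $\bsigma\in\sum_{j\in J}(\Delta_{C_j}+\R^n_+)=\Delta_{C_J}+\R^n_+$.

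I expect the main obstacle to be bookkeeping rather than conceptual: one has to be careful that the $\bmu_K$ are defined via the \emph{union} of supports (so $S_K$ can be much smaller than $\sum_{j\in K}|S_j|$ because of overlaps), which is exactly why a crude coordinate-by-coordinate argument fails and the graph-theoretic redistribution is needed; and one must be attentive to the strict inequalities, since the algorithm of Section \ref{sec:algorithm} must be quoted in the form that preserves \emph{strict} lower bounds (some care is needed because a naive Hall/flow argument only gives $\geq$). A secondary point to get right is the handling of coordinates not in $S_J$ and the degenerate cases (e.g. $|J|=1$, which reduces to Lemma \ref{lem:poles-1-column}, or $S_j=\emptyset$, which cannot happen since $A\in\Sig$ forces each column to be nonzero). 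Once Corollary \ref{cor:algorithm} is in hand in the right (strict) form, the rest is the routine verification sketched in steps (iv)--(v).
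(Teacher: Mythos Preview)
Your proposal is correct and follows essentially the same route as the paper: reduce the reverse inclusion to a decomposition problem via \eqref{eq:Supp-Sum-unitary}, then invoke Corollary \ref{cor:algorithm} to produce the required splitting $\bsigma=\sum_{j\in J}\bsigma_j$ with $\sum_{i\in S_j}(\bsigma_j)_i>1$. One small point of bookkeeping: Corollary \ref{cor:algorithm} as stated already hands you vectors $\bsigma_j\in\R^n_+$ (strictly positive in \emph{every} coordinate) summing to $\bsigma$, so your step (iv)---setting $(\bsigma_j)_i=0$ for $i\notin S_j$ and dumping leftovers onto a single $j_0$---is unnecessary and would in fact destroy the strict positivity needed to apply \eqref{eq:Supp-Sum-unitary}; simply use the $\bsigma_j$ the corollary provides and the proof is complete.
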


\begin{proof}
The inclusion from left to right was proven in Lemma \ref{lem:poles-several-comluns-inclusion}. For he other direction we prove the equivalent inclusion 
\begin{equation}\label{eq:auxiliary-for-lemma}
\sum_{j\in J}\big(\Delta_{C_j}+\R^n_+\big)\supset\Big(\bigcap_{K\subset J\,|\, K\neq\emptyset}\{\bsigma\in\R^n:\lan\bsigma,\bmu_K\ran>|K|\}\Big)\cap\R^n_+,
\end{equation}
where the $\bmu_j$s are as in Lemma \ref{lem:poles-1-column}. Notice also that for an element $\bsigma\in\R^n_+$ which lies in the set on the right hand side of \eqref{eq:auxiliary-for-lemma}, the equivalence \eqref{eq:Supp-Sum-unitary} implies that for every $K\subset J$ with $K\neq\emptyset$
\begin{equation*}\sum_{i\in S_K}(\bsigma)_i>|K|.
\end{equation*}
It follows from Corollary \ref{cor:algorithm} below that for every $j\in[n]$ there is $\bsigma_j\in \R^n_{+}$, such that \begin{equation*}
\sum_{i\in S_j}(\bsigma_j)_i>1,
\end{equation*}
and $\bsigma=\sum_{j\in [n]}\bsigma_j$. The latter together with \eqref{eq:Supp-Sum-unitary} imply that $\bsigma$ lies in the left hand side of \eqref{eq:auxiliary-for-lemma}, which yields the result. 
\end{proof}

We now state the main Theorem of this section which provides a better description of the type of vectors $\bmu_J^K$ that parametrise the poles of a Shintani zeta function according to Theorem \ref{thm:PolarlocusShintani}.

\begin{thm}\label{thm:Feynman-mu}
Let $A$ be a matrix in $\Sig$, with $\{C_j\}_{j\in[r]}$ the set of columns of $A$, and write $C_J(\bep):=\prod_{j\in J}\lan \bep,C_j\ran$ for every $J\subset[r] $.  Consider also for every $J\subset[r]\setminus\emptyset$ the vector $\bmu_J\in\{0,1\}^n$ which is the only one satisfying ${\rm Supp}(\bmu_J)=\bigcup_{j\in J}{\rm Supp}(\bmu_j)$. Then the possible singularities of the meromorphic extension of $\zeta_A$ are located on the hyperplanes 
\[\lan \bmu_J,\bs\ran=|J|-l,\] where $l\in\Z_{\geq0}$. If moreover $\bmu_J$ is a vector of the canonical basis of $\R^n$, then $l$ only take values on the set $\{1,2,\cdots,|J|-1\}$.
\end{thm}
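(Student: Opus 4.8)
The plan is to combine the three preceding results of this section with the polar description from Theorem \ref{thm:PolarlocusShintani}. First I would recall that by Theorem \ref{thm:PolarlocusShintani} the possible poles of $\zeta_A$ lie on hyperplanes $\lan\bmu_k^J,\bs\ran=\nu_k^J-l$, where the $\bmu_k^J$ are the primitive inward normals to the facets of $\Delta_{C_J}+\R^n_+$ and the $\nu_k^J$ the corresponding integers as in \eqref{eq:NewtonpoltoInfty}. So the whole task reduces to identifying explicitly the facet normals and the constants of the polyhedron $\Delta_{C_J}+\R^n_+$ for each nonempty $J\subset[r]$.

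The key step is Proposition \ref{prop:polesasFeynman}, which tells us that
\[\Delta_{C_J}+\R^n_+=\Big(\bigcap_{K\subset J,\,K\neq\emptyset}\{\bsigma\in\R^n:\lan\bsigma,\bmu_K\ran>|K|\}\Big)\cap\R^n_+.\]
From this intersection description I would extract the facet-normal data. The closure of the right-hand side is $\bigcap_{K}\{\lan\bsigma,\bmu_K\ran\geq|K|\}\cap\R^n_{\geq0}$; each bounding hyperplane is either $\lan\bsigma,e_i\ran=0$ (from the $\R^n_{\geq0}$ constraint) or $\lan\bsigma,\bmu_K\ran=|K|$ for some nonempty $K\subset J$. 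Now $e_i$ and each $\bmu_K$ already lie in $\{0,1\}^n$, hence are primitive (their nonzero coordinates are all $1$, so the gcd of the coordinates is $1$); therefore after discarding the redundant inequalities, the primitive facet normals $\bmu_k^J$ appearing in \eqref{eq:NewtonpoltoInfty} are exactly (a subset of) $\{e_1,\dots,e_n\}\cup\{\bmu_K:\emptyset\neq K\subset J\}$, with the corresponding $\nu_k^J$ equal to $0$ for an $e_i$ and equal to $|K|$ for a $\bmu_K$. Feeding this back into Theorem \ref{thm:PolarlocusShintani}: the $e_i$-hyperplanes with $\nu=0$ contribute nothing new after the cancellation with $\Gamma(\bs)$ (that is exactly the ``moreover'' clause of Theorem \ref{thm:PolarlocusShintani}, since $0\le l\le \nu-1$ is vacuous when $\nu=0$), so the genuine poles lie on $\lan\bmu_K,\bs\ran=|K|-l$ with $l\in\Z_{\geq0}$. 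Ranging over all $J\subset[r]$ and all nonempty $K\subset J$ is the same as ranging over all nonempty $J\subset[r]$ (take $K=J$), so we obtain precisely the hyperplanes $\lan\bmu_J,\bs\ran=|J|-l$ claimed.

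For the final sentence I would invoke again the ``moreover'' part of Theorem \ref{thm:PolarlocusShintani}: if $\bmu_J=e_i$ for some $i$, then the corresponding constant is $\nu_k^J=|J|$ (since $\bmu_J=\bmu_K$ with $K=J$ contributes $\nu=|K|=|J|$), and Theorem \ref{thm:PolarlocusShintani} says that in the case $\bmu_k^J=e_i$ only the hyperplanes with $0\le l\le \nu_k^J-1=|J|-1$ carry poles; moreover $l=0$ would give the hyperplane $\lan e_i,\bs\ran=|J|$ which for $|J|\geq 1$ is $s_i=|J|>0$, but one must check $l=0$ is actually excluded — it is, because the cancellation argument in the proof of Theorem \ref{thm:PolarlocusShintani} removes the poles of $\M_{\S(A)}$ on $\lan e_i,\bs\ran=l$ only for $l\in\Z_{\le 0}$, hence the admissible range is $1\le l\le |J|-1$, i.e. $l\in\{1,2,\dots,|J|-1\}$.

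The main obstacle I anticipate is the bookkeeping in the second paragraph: one has to be careful that the list $\{e_i\}\cup\{\bmu_K:\emptyset\neq K\subseteq J\}$ may contain redundant or non-facet inequalities, and that the primitivity and coprimality requirements in \eqref{eq:NewtonpoltoInfty} are met. But since every vector on this list lies in $\{0,1\}^n$ it is automatically primitive, and Theorem \ref{thm:PolarlocusShintani} already accounts for all of them (a redundant inequality simply does not produce a facet, hence produces no pole, which only shrinks the polar locus and is consistent with the stated result). A secondary subtlety is making sure that letting $J$ itself range over all nonempty subsets of $[r]$ recovers every $\bmu_K$ that appears for some larger $J$ — this is immediate since $\bmu_K$ with constant $|K|$ appears already when $J=K$. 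Thus no new hyperplanes are lost or gained, and the statement follows.
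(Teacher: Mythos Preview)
Your overall approach is exactly the paper's: invoke Theorem~\ref{thm:PolarlocusShintani} to get the poles from the facet normals of $\Delta_{C_J}+\R^n_+$, and then use Proposition~\ref{prop:polesasFeynman} to identify those normals with the vectors $\bmu_K\in\{0,1\}^n$ and the constants with $|K|$. Your bookkeeping (primitivity of $\{0,1\}$-vectors, redundancy only shrinking the polar locus, and the observation that ranging over $K\subset J\subset[r]$ is the same as ranging over nonempty $J\subset[r]$) is correct and in fact spells out more than the paper does.

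There is one genuine gap, in your treatment of the final sentence. Your argument for excluding $l=0$ is self-contradictory. You correctly observe that $l=0$ gives the hyperplane $s_i=|J|>0$, and you correctly recall that the $\Gamma(\bs)$-cancellation in the proof of Theorem~\ref{thm:PolarlocusShintani} removes only the poles of $\M_{\S(A)}$ on $s_i=l$ with $l\in\Z_{\le 0}$. But these two observations together show precisely that the potential pole at $s_i=|J|$ is \emph{not} killed by the cancellation, so the logic you wrote does not exclude $l=0$; it includes it. A direct application of the ``moreover'' clause of Theorem~\ref{thm:PolarlocusShintani} with $\nu_k^J=|J|$ yields the range $0\le l\le |J|-1$, i.e.\ $s_i\in\{1,\dots,|J|\}$, not $s_i\in\{1,\dots,|J|-1\}$. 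The paper's own proof is too terse to address this point either, so you are not missing an argument that appears elsewhere in the text; but as written your justification for dropping $l=0$ does not stand.
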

\begin{proof}
The vectors $\boldsymbol\mu_k^J$ from Theorem \ref{thm:PolarlocusShintani} are the normal vectors to the facets of the polyhedra $\Delta_{C_J}+\R^n_+$ with integer, mutually coprime coefficients. These vectors correspond to the $\boldsymbol\mu_K$s from Proposition \ref{prop:polesasFeynman}. The result follows from Proposition \ref{prop:polesasFeynman}.
\end{proof}

We proceed to describe the family of germs of meromorphic functions at zero, spanned by the germs containing the Shintani zeta functions. Recall that a germ of meromorphic functions at zero is an equivalence class of meromorphic functions determined by the following relation \[f\sim g\Leftrightarrow (\exists\, \mathcal O {\rm \ open}): 0\in\mathcal O\,\wedge\,f|_{\mathcal{O}}=g|_{\mathcal{O}}. \]

Let $M_{\zeta,n}$ be the family of germs of meromorphic functions at zero spanned by those containing Shintani zeta functions of matrices with $n$ rows. In particular the germs in $M_{\zeta,n}$ depend on $n$ complex variables. Using the natural embeddings $\iota_{n,m}:M_{\zeta,n}\to M_{\zeta,m}$ whenever $n<m$, we define the direct limit $M_{\zeta}:=\underset{\longrightarrow}{{\rm Lim}}\,  M_{\zeta,n}$. 

Using the formalism developed in \cite{GPZ4}, and given that the germs in $M_{\zeta}$ have linear poles as it was proven by Matsumoto \cite{M1} and in Theorem \ref{thm:PolarlocusShintani}, we can describe a Laurent expansion of $[\zeta_A]\in M_\zeta$ for any $A\in\Sig$. Indeed, the germ $[\zeta_A]$ can be written as
\begin{equation*}
[\zeta_A]=\sum_{i\in I} S_i+h, 
\end{equation*} 
where $h$ is an holomorphic germ at zero and the $S_i$ are fractions of the form \begin{equation*}
\bs\mapsto S_i(\bs)=\frac{h_i(\bs)}{\prod\lan\boldsymbol\mu_k^J,\bs\ran},
\end{equation*}

where $h_i$ are holomorphic germs depending on variables orthogonal to their respective denominator, and the $\boldsymbol\mu_k^J$ are as in Theorem \ref{thm:PolarlocusShintani}.

As a direct consequence of the previous discussion and of Theorem \ref{thm:Feynman-mu}, we have the following result.

\begin{prop}
The germs on $M_\zeta$ are of the type $\sum_{i\in I} S_i+h$, where $h$ is an holomorphic germ and the $S_i$ are fractions of the form \[\bs\mapsto S_i(\bs)=\frac{h_i(\bs)}{\prod_{J\in\mathcal J_i}(\sum_{j\in J} s_j)},\]
where $\mathcal J_i$ is a finite collection of subsets of $[n]$ with more than one element.
\end{prop}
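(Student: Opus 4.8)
The plan is to combine the explicit Laurent-expansion description of germs $[\zeta_A]$ that was sketched just before the statement with the structural information about the vectors $\bmu_k^J$ supplied by Theorem \ref{thm:Feynman-mu}. The discussion preceding the proposition already shows that every germ containing a Shintani zeta function can be written as $\sum_{i\in I}S_i+h$ with $h$ holomorphic and each $S_i$ of the form $h_i(\bs)/\prod\lan\bmu_k^J,\bs\ran$, where $h_i$ is holomorphic in variables orthogonal to its denominator and the $\bmu_k^J$ are the normal vectors of Theorem \ref{thm:PolarlocusShintani}. The content of the proposition is that, after passing to the family $M_\zeta$, the denominators can be taken to be products of linear forms $\sum_{j\in J}s_j$ with $|J|\geq 2$.

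First I would recall that, by Theorem \ref{thm:Feynman-mu}, for a matrix $A\in\Sig$ the vectors $\bmu_k^J$ appearing in the poles of $\zeta_A$ are exactly the $\bmu_K$ with $K$ a non-empty subset of some $J\subset[r]$, hence they all lie in $\{0,1\}^n$; the associated hyperplanes are $\lan\bmu_K,\bs\ran=|K|-l$ for $l\in\Z_{\geq0}$, and only the case $l=|K|$, giving the hyperplane $\lan\bmu_K,\bs\ran=0$, passes through the origin and thus contributes to the germ at zero. For such a $\bmu_K$ the linear form $\lan\bmu_K,\bs\ran$ equals $\sum_{j\in \operatorname{Supp}(\bmu_K)}s_j$. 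The only subtlety is the case $|\operatorname{Supp}(\bmu_K)|=1$, i.e.\ $\bmu_K=e_i$: here Theorem \ref{thm:Feynman-mu} states that $l$ ranges only over $\{1,\dots,|K|-1\}$, so the value $l=|K|$ is excluded and no hyperplane $\{s_i=0\}$ occurs. Therefore every linear form actually appearing in a denominator of the germ $[\zeta_A]$ is of the form $\sum_{j\in J}s_j$ with $J=\operatorname{Supp}(\bmu_K)$ a subset of $[n]$ of cardinality at least $2$. Passing to finite linear combinations and to the direct limit $M_\zeta=\varinjlim M_{\zeta,n}$ does not introduce any new type of pole, since a common denominator of a finite sum of such fractions is again a product of linear forms of this type; collecting these $J$'s into the finite family $\mathcal J_i$ for each summand $S_i$ gives exactly the claimed form.

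I would organise the write-up as: (1) invoke the Laurent decomposition $[\zeta_A]=\sum_i S_i+h$ from the paragraph above the statement; (2) apply Theorem \ref{thm:Feynman-mu} to identify the $\bmu_k^J$ as elements of $\{0,1\}^n$ and note that only the hyperplanes through the origin survive in the germ; (3) observe that such a surviving hyperplane has equation $\sum_{j\in\operatorname{Supp}(\bmu_K)}s_j=0$, and that the $e_i$-case is excluded by the second assertion of Theorem \ref{thm:Feynman-mu}, forcing $|\operatorname{Supp}(\bmu_K)|\geq2$; (4) close the class under finite $\R$-linear combinations and under the maps $\iota_{n,m}$ to pass to $M_\zeta$, rewriting each resulting fraction with denominator $\prod_{J\in\mathcal J_i}(\sum_{j\in J}s_j)$. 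The main thing to be careful about — rather than a genuine obstacle — is the bookkeeping in step (4): when summing germs $[\zeta_{A}]$ and $[\zeta_{A'}]$ one must put the fractions over a common denominator, and one should check that regrouping the numerators does not spoil the property that each $h_i$ depends only on variables complementary to its own denominator; this is handled exactly as in \cite{GPZ4}, using that the individual factors $\sum_{j\in J}s_j$ are the same linear forms across all matrices, so no new incompatibility of poles arises. Everything else is a direct translation of Theorem \ref{thm:Feynman-mu} into the language of meromorphic germs.
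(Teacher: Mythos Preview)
Your proposal is correct and follows the same route as the paper, which simply presents the proposition as ``a direct consequence of the previous discussion and of Theorem~\ref{thm:Feynman-mu}'' with no further argument. You have merely unpacked what that sentence means: the Laurent decomposition from \cite{GPZ4} already writes the germ with denominators $\prod\lan\bmu_k^J,\bs\ran$, Theorem~\ref{thm:Feynman-mu} forces each $\bmu_k^J\in\{0,1\}^n$ so that $\lan\bmu_k^J,\bs\ran=\sum_{j\in\operatorname{Supp}(\bmu_k^J)}s_j$, and the second clause of that theorem rules out the canonical-basis vectors, giving $|\operatorname{Supp}(\bmu_k^J)|\geq2$; closure under linear combinations and the direct limit is then routine. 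Your care in step~(4) about regrouping numerators is more than the paper demands, but harmless.
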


We finally give some examples of how to calculate the domain of absolute convergence and the possible singularities of a Shintani zeta function $\zeta_A$ from the matrix $A$. By means of Corollary \ref{cor:A-squeleton} we will only consider matrices with arguments equal to $0$ or $1$.

\begin{ex}
Consider the matrix 
\begin{equation*}
B=\begin{pmatrix}
1&1\\
1&1
\end{pmatrix}.
\end{equation*}
We calculate the vectors $\bmu_1=(1,1)=\bmu_2$ from Lemma \ref{lem:poles-1-column} related to the first and second columns of the matrix $A$. It follows that $\bmu_{\{1,2\}}=(1,1)$. By means of Propositions \ref{prop:Shintani_conv_domain} and \ref{prop:polesasFeynman} the sum $\zeta_A$ is absolutely convergent whenever $\bsigma=\Re\bs$ satisfies \[\sigma_1+\sigma_2>2.\]
By means of Theorem \ref{thm:Feynman-mu} the possible singularities of $\zeta_A$ are located on the hyperplanes \[s_1+s_2=2-l,\]
where $l$ takes values in $\Z_{\geq0}.$
\end{ex}

\begin{ex}
Consider the matrix 
\begin{equation*}
A=\begin{pmatrix}
1&0&0\\
1&1&1\\
0&1&0
\end{pmatrix}.
\end{equation*}
The vectors $\mu_J$ are as follows \[\bmu_1=(1,1,0),\quad \bmu_2=(0,1,1),\quad \bmu_3=(0,1,0),\quad \bmu_{\{1,2\}}=(1,1,1),\]\[\bmu_{\{1,3\}}=(1,1,0),\quad\bmu_{\{2,3\}}=(0,1,1),\quad{\rm and }\quad\bmu_{\{1,2,3\}}=(1,1,1).\] 
It follows from Propositions \ref{prop:Shintani_conv_domain} and \ref{prop:polesasFeynman} that the sum $\zeta_A$ is absolutely convergent whenever the following inequalities are satisfied
\[\sigma_2>1,\quad\sigma_1+\sigma_2> 2,\quad \sigma_2+\sigma_3\geq2,\quad{\rm and}\quad \sigma_1+\sigma_2+\sigma_3\geq3.\]
By means of Theorem \ref{thm:Feynman-mu} the possible singularities of $\zeta_A$ are located on the hyperplanes \[s_2=1,\quad s_1+s_2= 2-l,\quad s_2+s_3=2-l,\quad{\rm and}\quad s_1+s_2+s_3=3-l\]where $l$ takes values in $\Z_{\geq0}$.
\end{ex}

\section{Distributing weight over a graph}\label{sec:algorithm}

Interestingly the proof of Theorem \ref{thm:Feynman-mu}, more precisely the proof of Proposition \ref{prop:polesasFeynman}, borrows tools from graph theory, which is the reason why in this section we do an excursion to this theory. Our main objective is to prove Corollary \ref{cor:algorithm} which is essential for the proof of Proposition \ref{prop:polesasFeynman}. For an introduction to graph theory we refer the reader to one of the many good references in this subject, for instance \cite{MM}.
The main result of this section is Theorem \ref{thm:algorithm}, the proof of which provides an algorithm to "distribute" weight over an intersection graph of a family of sets, such that the weight at each vertex is never lower than an imposed bound. This algorithm is, to the author's knowledge, new.

{\bf Notation:} Throughout this section we use the round brackets to refer to the coordinate of a vector in the canonical basis of $\R^n$. More precisely, for $\bsigma\in\R^n$, $(\bsigma)_k$ is the $k$-th component of $\bsigma$ in the canonical basis of $\R^n$.

\

It is easy to check that any real number $\sigma\geq m$, with $m\in \Z_{\geq0}$, can be expressed as a sum $\sigma=\sum_{j=1}^m\sigma_j$ where each $\sigma_j$ is bigger or equal than $1$. A way of generalizing the previous statement is to consider $\bsigma$ in $\R^2_{\geq 0}$, such that $(\bsigma)_1\geq1$, $(\bsigma)_2\geq 1$ and $(\bsigma)_1+(\bsigma)_2\geq 3$. One can check that $\bsigma$ admits a splitting of the form $\bsigma=\sum_{j=1}^3\bsigma_j$, with the vectors $\bsigma_j$ lying in $\R^2_{\geq0}$, and such that $(\bsigma_1)_1\geq1$, $(\bsigma_2)_2\geq1$ and $(\bsigma_3)_1+(\bsigma_3)_2\geq1$.  Indeed, one possible solution is to set $\bsigma_1=(1,0)$, $\bsigma_2=(0,1)$ and $\bsigma_3=\bsigma-\bsigma_1-\bsigma_2$. The following theorem is a generalization of this fact to any finite dimension $n$ and any number of vectors $m$.
 
For $m$ and $n$ positive integers, consider an application which associates to every $j$ in $[m]$ a set $S_j\subset[n]$.

\begin{thm}\label{thm:algorithm}
Any $\bsigma$ in $\R^n_{\geq0}$, such that for every $K\subset [m]$
\begin{equation*}\label{eq:hyp-algor-thm}\sum_{i\in\bigcup_{k\in K}S_k}(\bsigma)_i\geq|K|,\end{equation*}
can be written as a sum $\bsigma=\sum_{j\in [m]}\bsigma_j$, where the vectors $\bsigma_j$ lie in $\R^n_{\geq0}$, and such that 
\begin{equation}\label{eq:thesis-algor-thm}\sum_{i\in S_j}(\bsigma_j)_i\geq1.\end{equation}
\end{thm}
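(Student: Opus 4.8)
The plan is to prove this by induction on $m$, the number of sets, combined with the greedy/flow idea already hinted at in the $n=2$, $m=3$ example. The base case $m=1$ is immediate: the hypothesis with $K=\{1\}$ gives $\sum_{i\in S_1}(\bsigma)_i\geq 1$, so $\bsigma_1:=\bsigma$ works. For the inductive step, the key is to peel off one vector $\bsigma_m$ supported on $S_m$ (i.e.\ with all coordinates outside $S_m$ equal to zero), of total mass exactly $1$, in such a way that the residual vector $\bsigma':=\bsigma-\bsigma_m$ still lies in $\R^n_{\geq 0}$ and still satisfies the hypothesis relative to the family $\{S_j\}_{j\in[m-1]}$, i.e.\ $\sum_{i\in\bigcup_{k\in K}S_k}(\bsigma')_i\geq |K|$ for every $K\subset[m-1]$. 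Then the induction hypothesis applied to $\bsigma'$ and $S_1,\dots,S_{m-1}$ finishes the proof.

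First I would set up the extraction: I want to choose nonnegative weights $(\bsigma_m)_i$ for $i\in S_m$, summing to $1$, with $(\bsigma_m)_i\leq(\bsigma)_i$. The constraint that must survive is, for each nonempty $K\subset[m-1]$, writing $T:=\bigcup_{k\in K}S_k$,
\[
\sum_{i\in T}(\bsigma)_i-\sum_{i\in T\cap S_m}(\bsigma_m)_i\geq |K|.
\]
Since $\sum_{i\in T}(\bsigma)_i\geq |K|$ if $m\notin$ "the index completing $K$" — more precisely, applying the original hypothesis to $K\cup\{m\}$ gives $\sum_{i\in T\cup S_m}(\bsigma)_i\geq |K|+1$, so the amount of mass sitting on $S_m\setminus T$ plus the "slack" $\sum_{i\in T}(\bsigma)_i-|K|$ is at least $1$. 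This is exactly the kind of bound that lets a greedy allocation succeed: I would allocate the unit of mass for $\bsigma_m$ preferentially to coordinates of $S_m$ that lie in as few of the sets $T_K$ as possible (ideally coordinates of $S_m$ lying outside $\bigcup_{j<m}S_j$ first), and only dip into shared coordinates when forced, never removing more from a shared coordinate than the available slack permits. Formalizing "there is enough room" is where a Hall-type / max-flow–min-cut argument enters — and this is precisely what Theorem~\ref{thm:algorithm}'s own proof (the algorithm on the intersection graph) is designed to provide, so in practice I expect the cleanest route is to phrase the whole statement directly as the flow problem: a bipartite-like network with a source feeding $m$ "demand" nodes (one per $S_j$, demand $1$), demand nodes connected to the coordinates they contain, coordinates connected to a sink with capacity $(\bsigma)_i$, and then verify the cut condition $\sum_{i\in\bigcup_{j\in K}S_j}(\bsigma)_i\geq|K|$ is exactly the min-cut $\geq$ total-demand condition.

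The main obstacle I anticipate is verifying the residual hypothesis for \emph{all} subsets $K$ simultaneously after the extraction — a naive greedy choice can over-deplete a coordinate shared by many of the $T_K$'s and break the inequality for some particular $K$. The way around this is not to make an arbitrary greedy choice but to let the max-flow structure dictate it: by the integrality/feasibility of the flow, whatever unit mass the flow routes into $S_m$ automatically leaves a feasible residual network for the remaining $m-1$ demands, which is precisely the statement that the residual $\bsigma'$ satisfies the hypothesis for $\{S_j\}_{j<m}$. Concretely I would (i) build the network above, (ii) use the hypothesis to check Menger/Hall so a flow of value $m$ exists, (iii) take such a flow, read off $\bsigma_j$ as the mass routed through demand node $j$ (topped up arbitrarily with extra mass so that $\sum_j\bsigma_j=\bsigma$ — any leftover sink-capacity can be dumped into any single $\bsigma_j$ without destroying $\sum_{i\in S_j}(\bsigma_j)_i\geq 1$), and (iv) confirm each $\bsigma_j\geq 0$ lies in $\R^n_{\geq0}$ and satisfies \eqref{eq:thesis-algor-thm} by construction. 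The only genuinely delicate point is step (ii): translating the family of hypotheses $\sum_{i\in\bigcup_{k\in K}S_k}(\bsigma)_i\geq|K|$ into the min-cut bound, which requires checking that an arbitrary cut in the network, after discarding redundant edges, corresponds to choosing a subset $K$ of demand nodes on the source side and a superset of $\bigcup_{k\in K}S_k$ of coordinates on the sink side — so the hypothesis indexed by $K$ is exactly what bounds that cut from below by $|K|$.
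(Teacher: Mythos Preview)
Your max-flow/min-cut reformulation is correct and is a genuinely different route from the paper's. The paper also inducts on $m$, but in the reverse order to your initial sketch: it first invokes the induction hypothesis to obtain $\bsigma_1,\dots,\bsigma_{m-1}$ with $\bsigma=\sum_{j<m}\bsigma_j$ and $\sum_{i\in S_j}(\bsigma_j)_i\geq 1$, and only \emph{then} builds $\bsigma_m$ by an explicit redistribution procedure on the intersection graph of the $S_j$, pushing mass along paths toward the vertex $m$ until every vertex still connected to $m$ has been drained down to total mass exactly~$1$; a final accounting shows $\sum_{i\in S_m}(\bsigma_m)_i\geq 1$. Your approach bypasses that algorithm entirely: once the network (source $\to$ demand nodes $\to$ coordinate nodes $\to$ sink) is set up, LP duality (equivalently max-flow~$=$~min-cut, valid for real capacities) yields the full decomposition in one stroke and no induction is needed. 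The paper in fact acknowledges this alternative in a remark following its proof, noting the connection to Hall's marriage theorem and that a proof via von Neumann's minmax theorem is possible. What the paper's argument buys is a self-contained, constructive procedure with no appeal to flow theory or duality; what yours buys is brevity and conceptual transparency.

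One small slip in your cut analysis: with the orientation you describe, if the demand nodes indexed by $K$ lie on the source side then the coordinates in $\bigcup_{k\in K}S_k$ must also lie on the \emph{source} side (else an infinite-capacity demand$\to$coordinate edge is cut), so the finite cuts have value $(m-|K|)+\sum_{i\in\bigcup_{k\in K}S_k}(\bsigma)_i\geq m$ by the hypothesis. This is exactly what you want; the phrase ``on the sink side'' in your last sentence should read ``on the source side''.
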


\begin{rk}\label{rk:charac-matrix}
Notice that the family of sets $S_j$ used in Theorem \ref{thm:algorithm} can be represented by a matrix $\mathfrak A$ with $n$ rows and $m$ columns, built using the characteristic functions of the sets $S_j$. More precisely, if $\chi_j$ is the characteristic function of the set $S_j$, then the argument on the $i$-th row and $j$-th column of $\mathfrak A$ is $\chi_j(i)$. 

In particular, if in Proposition \ref{prop:polesasFeynman} $J=[r]$, and the sets $S_j$ are the supports of the vectors $\bmu_j$, then $\mathfrak{A}$ would be an $n\times r$ matrix with zeros in the same positions than the matrix $A$, and ones in the rest of the arguments. It follows from Corollary \ref{cor:A-squeleton} that $\zeta_A$ and $\zeta_{\mathfrak A}$ have the same polar structure.
\end{rk}

We do the proof of the previous theorem by induction over $m$, using an algorithm which uses the intersection graph of the sets $S_j$. To illustrate how it works, we present an example before giving the actual proof.

\begin{ex} Set $n=6$, $m=5$ and consider the sets $S_1=\{2,3\}$, $S_2=\{1,4,6\}$, $S_3=\{1,5\}$, $S_4=\{6\}$ and $S_5=\{3,4,5\}$. The matrix built with the characteristic functions of the sets $S_j$ as in Remark \ref{rk:charac-matrix} is
\[\mathfrak A=\begin{pmatrix}
0&1&1&0&0\\
1&0&0&0&0\\
1&0&0&0&1\\
0&1&0&0&1\\
0&0&1&0&1\\
0&1&0&1&0
\end{pmatrix}\]

Consider the vector $\bsigma=(1.9, 0.6, 0.6, 0.8, 0.2,1.4)$. One can check that it satisfies
\[\sum_{i\in\bigcup_{k\in K}S_k}(\bsigma)_i\geq|K|\]for every $K\subset [5]$ with $K\neq\emptyset$.

Since our proof is by induction, consider the vectors $\bsigma_1=(0,0.6,0.6,0,0,0)$, $\bsigma_2=(0.1,0,0,0.8,0,0.3)$, $\bsigma_3=(1.8,0,0,0,0.2,0)$ and $\bsigma_4=(0,0,0,0,0,1.1)$, and therefore \[\bsigma=\sum_{j=1}^4\bsigma_{j}.\] Notice that for every $j\in[4]$, the vector $\bsigma_{j}$ satisfies $\sum_{i\in S_j}(\bsigma_j)_i\geq1$. We proceed to describe an algorithm to build another vector $\bsigma_5$ out of the current vectors $\bsigma_j$ with $1\leq j\leq4$ such that it satisfies the conditions on the statement of Theorem \ref{thm:algorithm}.

Build the intersection graph $\mathcal G$ of the family of sets $S_j$ with $j\in [5]$ (see for instance \cite{MM}), i.e. a non oriented graph whose set of vertices is $[5]$ and whose set of edges is \[E(\mathcal G):=\{(i,j)\in[5]^2: S_i\cap S_j\neq\emptyset \}.\]
Namely,

\begin{equation*}
\begin{tikzcd}
&   4\arrow[d] \\
1\arrow[rd]&2\arrow[r,dash]\arrow[d]&3\arrow[ld]\\
& 5& \\
\end{tikzcd} 
\end{equation*}

Since at this point, the only coordinates of the vectors $\bsigma_j$ which are bigger than zero are the coordinates in $S_j$, this graph represents the way the vectors can "share" some amount to the vector $\bsigma_5$. This is represented by the arrows, but keep in mind that the graph is not oriented.

Take for instance the vertex 1. It has an edge connecting to the vertex 5. This means $S_1\cap S_5\neq\emptyset$. Indeed $\{3\}\subset S_1\cap S_5$, and therefore we can subtract an amount $\lambda$ from the third coordinate of $\bsigma_1$ and add it to $\bsigma_5$. This value $\lambda$ cannot be bigger than $0.6$ since $(\bsigma_1)_3=0.6$ and then $\bsigma_1$ would leave $\R^6_+$. It also cannot be bigger than $0.2$ since $\sum_{i\in S_1}(\bsigma_1)_i=0.6+0.6$ must remain bigger than $1$. We therefore set $\lambda=0.2$, and set the new vectors \[\bsigma_1=(0,0.6,0.4,0,0,0)\]and
\[\bsigma_5=(0,0,0.2,0,0,0).\]

Proceed to the vertex 2, which is still connected by an edge to the vertex 5 since $4\in S_2\cap S_5$. By a similar analysis as before, we realize we can subtract $\lambda=0.2$ again tho the fourth coordinate of $\bsigma_2$ and add it to $\bsigma_5$, having then\[\bsigma_2=(0.1,0,0,0.6,0,0.3),\]and \[\bsigma_5=(0,0,0.2,0.2,0,0).\] 
We proceed to the vertex 3 and by the same analysis we subtract $\lambda=0.2$ to the fifth coordinate of $\bsigma_3$ and add it to the fifth coordinate of $\bsigma_5$ setting \[\bsigma_3=(1.8,0,0,0,0,0),\]and \[\bsigma_5=(0,0,0.2,0.2,0.2,0).\]
However this time, even with $\sum_{i\in S_3}(\bsigma_3)_i=1.8$ the vector $\bsigma_3$ cannot share anything else with $\bsigma_5$ since $\sum_{i\in S_3\cap S_5}(\bsigma_3)_i=0$. For this reason we cut the vertex between vertices 3 and 5. The new graph $\mathcal G$ is therefore
\begin{equation*}
\begin{tikzcd}
& 4\arrow[d]& 3\arrow[ld]\\
1\arrow[rd]&2\arrow[d]&\\
&5& \\
\end{tikzcd} 
\end{equation*} 
and vertex 3 is now at least at 2 steps from vertex 5.

We proceed now to analyze vertices which are not direct neighbors of 5. Consider the vertex 4: by a similar argumentation as before the sixth coordinate of $\bsigma_4$ can share a maximum of $0.1$ to $\bsigma_2$, which then can shares $0.1$ of its fourth coordinate with $\bsigma_5$. Then the new vectors are\[\bsigma_4=(0,0,0,0,0,1),\]
\[\bsigma_2=(0.1,0,0,0.5,0,0.4)\]and \[\bsigma_5=(0,0,0.2,0.3,0.2,0).\] 

We finally revisit vertex 3 but now as an indirect neighbor of the vertex 5. We see now that we may subtract $0.8$ from the first component of $\bsigma_3$ and add it to $\bsigma_2$. Then $\bsigma_2$ can shares a maximum of $0.5$ from its fourth component with $\bsigma_5$, and then
\[\bsigma_3=(1,0,0,0,0,0),\]
\[\bsigma_2=(0.9,0,0,0,0,0.4),\]and \[\bsigma_5=(0,0,0.2,0.8,0.2,0).\] Since $\sum_{i\in S_2\cap S_5}(\bsigma_2)_i=0$ we cut the edge between the the vertices 2 and 5 and the graph now looks like this.
\begin{equation*}
\begin{tikzcd}
&  4\arrow[d, dash]& 3\arrow[ld,dash]\\
1\arrow[rd]&2&\\
& 5& \\
\end{tikzcd} 
\end{equation*} 
The vectors we end up with are:\begin{align*}
\bsigma_1&=(0,0.6,0.4,0,0,0)\\
\bsigma_2&=(0.9,0,0,0.5,0,0.4)\\
\bsigma_3&=(1,0,0,0,0,0)\\
\bsigma_4&=(0,0,0,0,0,1)\\
\bsigma_5&=(0,0,0.2,0.8,0.2,0),
\end{align*}
which clearly satisfy the requirements. Indeed $\bsigma\in\R^6_+$, $\sum_{i\in S_j}(\bsigma_j)_i\geq1$, and $\bsigma=\sum_{j=1}^5\bsigma_j$. Notice also that the final graph has two separated components. This means, as it can be checked, that the vectors corresponding to vertices in the components not containing 5 (namely the vertices 2, 3, and 4) have no intersection in their supports with $S_5$, or the coordinates in the intersection are equal to $0$. The last implies that they cannot share any more with the vector $\bsigma_5$. 

\end{ex}

After having illustrated in the example how the algorithm works we shall provide the actual proof of Theorem \ref{thm:algorithm}

\begin{proof}[Proof of Theorem \ref{thm:algorithm}]

The proof is by induction over $m$. The case $m=1$ is trivial setting $\bsigma_1=\bsigma$. For the inductive step assume the statement is true for $m-1$ and consider a vector $\bsigma\in \R_{\geq0}^n$ which, for every $K\subset [m]$, satisfies 
\[\sum_{i\in\bigcup_{k\in K}S_k}(\bsigma)_i\geq|K|.\]

It follows from the induction hypothesis that there exist vectors $\bsigma_j\in\R^n_{\geq 0}$ for $1\leq j\leq m-1$ satisfying \eqref{eq:thesis-algor-thm}, and such that $\bsigma=\bsigma_{j_1}+\cdots+\bsigma_{j_{m-1}}$.

Define the intersection graph $\mathcal G$ for the sets $S_j$, namely the graph whose set of vertices is $[m]$ and whose set of edges is  \[E(\mathcal G):=\{(i,j)\in[m]^2: {\rm Supp}(\bmu_i)\cap{\rm Supp}(\bmu_j)\neq\emptyset \}.\] 
Consider then the distance map $d_m:[m]\to\Z_{\geq0}\cup\{\infty\}$, where $d_m(j)$ is the minimum number of steps one has to give in the graph $\mathcal G$ to go from the vertex $j$ to the vertex $m$. If there is no path from $j$ to $m$ we set $d_m(j)=\infty$. Notice that this distance map does not define an order on the connected component of the graph containing the vertex $m$, since the antisymmetry might fail.  For every $j\in [m-1]$ such that $d_m(j)\neq\infty$ define $S_{j,<}:=\{i\in S_j: (\exists l\in [m])\, i\in S_l\ \wedge\  d_m(l)=d_m(j)-1\}$. In words, $S_{j,<}$ is the subset of elements in $S_j$ which are also in some $S_l$, where $l$ is a vertex closer to $m$ in the the graph $\mathcal G$. 

Finally set a counter $\alpha=1$ which will help us analyze the vertices of $\mathcal G$ depending on their distance to the vertex m.

We use the notation $:\models$ to redefine a parameter on the left hand side using its old value on the right hand side. For instance $\alpha:\models \alpha+1$ means we add $1$ to the value of the counter $\alpha$. 

\begin{enumerate}
\item Take a vertex $j$  such that $d_m(j)=\alpha$, and let $\rho(j)$ be the set of paths of length $\alpha$ going from $j$ to $m$. 
\item Choose a path $\varrho=(h_1,\cdots,h_\alpha)\in \rho(j)$, this means that $h_1=j$ and $h_\alpha=m$, and set a counter $\beta=1$.
\item For the vertex $h_\beta$, consider \[r={\rm min}\{\sum_{i\in S_{h_\beta}}(\bsigma_{h_{\beta}})_i-1,\,\sum_{i\in S_{h_\beta}\cap S_{h_{\beta+1}}}(\bsigma_{h_\beta})_i\}\]
 and subtract from the coordinates $(\bsigma_{h_\beta})_i$ with $i\in S_{h_\beta}\cap S_{h_{\beta+1}}$ a total amount of $r$, which will be added to the same coordinates of the vector $\bsigma_{h_{\beta+1}}$. This is, for $i\in S_{h_\beta}\cap S_{h_{\beta+1}}$ redefine 
 \begin{align*}(\bsigma_{h_\beta})_i&:\models (\bsigma_{h_\beta})_i-\lambda_i,\ \ \ {\rm and}\\
 (\bsigma_{h_{\beta+1}})_i&:\models (\bsigma_{h_{\beta+1}})_i+\lambda_i
 \end{align*} where $0\leq\lambda_i<(\bsigma_{h_\beta})_i$, and such that $\sum_{i\in S_{h_\beta}\cap S_{h_{\beta+1}}}\lambda_i=r$.   

Notice at this point that the existence of the $\lambda_i$s is granted by the way $r$ is defined. Notice also that the new vectors $\bsigma_{h_\beta}$ and $\bsigma_{h_{\beta+1}}$ still satisfy \eqref{eq:thesis-algor-thm}, except if $h_{\beta+1}=m$.

If $\beta<\alpha-1$, add one to the counter $\beta$, namely $\beta:\models \beta+1$ and repeat step 3. If $\beta=\alpha-1$ continue with the next step.

\item We have three options at this point:
\begin{itemize}
\item If $\sum_{i\in S_{h_\beta,<}}(\bsigma_{h_\beta})_i>0$ and $\beta>1$, subtract $1$ to the value of $\beta$, namely
\[\beta:\models \beta-1,\]and repeat step 4.
\item If $\sum_{i\in S_{h_\beta,<}}(\bsigma_{h_\beta})_i=0$, it means that the vector $\bsigma_{h_\beta}$ has shared everything from the coordinates in $S_{h_\beta,<}$ . It implies that it cannot share any more to a vertex closer to $m$. In this case remove the edges $(h_\beta,h_{\beta+1})$ and $(h_{\beta+1},h_{\beta})$ from the set of edges of the graph $\mathcal G$, recalculate the values of the distance map $d_m$ for every vertex and go step 1, without resetting the value of $\alpha$.
\item If $\sum_{i\in S_{h_\beta,<}}(\bsigma_{h_\beta})_i>0$ and $\beta=1$, proceed to the next step.
\end{itemize}
\item We have again two options:
\begin{itemize}
\item If there is a path $\varrho\in\rho(j)$ that we haven't followed in step 2, choose that path and go back to step 2.
\item If the process from step 2, has been followed for every $\varrho\in\rho$, then proceed to the next step.
\end{itemize}
\item We have again three options:
\begin{itemize}
\item If there is any other vertex $j$ with $d_m(j)=\alpha$ that hasn't been chosen in step 1, choose that vertex $j$ and go back to step 1.
\item If the process in step 1 has been followed for every vertex $j$ such that $d_m(j)=\alpha$, and $\alpha$ is not maximal yet, namely $\alpha<{\rm max}\{d_m(j):j\in J\, \wedge\, d_m(j)<\infty\}$, then add 1 to the value of $\alpha$ \[\alpha:\models\alpha+1\]and go back to step 1.
\item  If the process from step 1 has been followed for every vertex $j$ such that $d_m(j)=\alpha$, and $\alpha$ is maximal, proceed to the last step.
\end{itemize}
\item For every $j$ in $[m-1]$, consider the coordinates $i\notin S_{j}$ and redefine \[(\bsigma_{j})_i:\models 0,\ \ \ {\rm and}\]\[(\bsigma_{m})_i:\models(\bsigma_{m})_i+(\bsigma_{j})_i,\]
and finish the algorithm.
\end{enumerate}

The algorithm is clearly finite since the graph $\mathcal G$ is finite.
We proceed to prove that the vectors $\bsigma_{j}$ obtained after  following the algorithm satisfy the requirements\begin{equation}\label{eq:2nd-requi-algorithm}\bsigma=\sum_{j=1}^m\bsigma_{j},\ \ \ \bsigma_{j}\in\R^n_{\geq0},\ \ \ {\rm and}\ \ \ \sum_{i\in S_{j}}(\bsigma_{j})_i\geq 1,\end{equation}
for every $j$ in $[m]$.

The first condition is satisfied by assumption and the fact that every amount that was subtracted from a coordinate of a vector, was added to the same coordinate of the other vector. The second condition is a consequence of having added positive values to the coordinates of the vectors. Only in step 3 a positive real number $\lambda_i$ is subtracted from the coordinates in the the support of the vector $\bsigma_{h_\beta}$, but $\lambda_i\leq(\bsigma_{h_\beta})_i$ and therefore all coordinates remain non negative. The fact that \eqref{eq:2nd-requi-algorithm} is satisfied for all $l\in[m-1]$ is also a consequence of the way $r$ is chosen in step 3, since $r<\sum_{i\in S_{h_\beta}}(\bsigma_{h_{\beta}})_i-1$, then the vector $\bsigma_{h_\beta}$ still satisfies \eqref{eq:2nd-requi-algorithm}. Finally we have to check that 
\[\sum_{i\in S_{m}}(\bsigma_{m})_i\geq1.\]

Notice first that for all vertices $j$ not in the connected component of the graph $\mathcal G$ containing the vertex $m$, the value of their coordinates $i\in S_{m}$ is equal to $0$. Let $K$ be the subset of $[m]$ whose vertices are on the connected component containing $m$. As a consequence of the algorithm, for every $k\in K$, $\sum_{i\in S_k}(\bsigma_k)_i=1$. Indeed, if it is still connected to the vertex m, step 4 indicates that $\sum_{i\in S_{k,<}}(\bsigma_k)_i> 0$.  This is only possible if at some point the third step of the algorithm gives $r=\sum_{i\in S_k}(\bsigma_k)_i-1$, which means that at the end of the algorithm $\bsigma_k$ is such that \[\sum_{i\in S_k}(\bsigma_k)_i=1.\] On the other hand, by assumption
\begin{align*}|K|\leq&\sum_{i\in \bigcup_{k\in K}S_k}(\bsigma)_i\\
=&\sum_{k\in K\setminus\{m\}}\big(\sum_{i\in S_k}(\bsigma_k)_i\big)+\sum_{i\in S_m}(\bsigma_{m})_i.
\end{align*}
The first term on the last line is equal to $(|K|-1)$, which implies that $\sum_{i\in S_m}(\bsigma_{j_m})_i\geq1$ as expected. This finishes the proof.

\end{proof} 

\begin{rk}
Notice that the process described in the proof is not an algorithm in a strict sense since it is not completely deterministic. For instance, there is some freedom when choosing the order in which one analyzes the vertices with the same $d_m$ in step 1, or the different paths in step 2. Also there might be a freedom in the way the $\lambda_i$s are chosen in step 3. Different choices might lead to different constructions of the vectors $\bsigma_m$, but it doesn't matter because they all satisfy the requirements of the theorem.

We must also warn that this process is not optimal in computation time since many redundancies may occur. It is written with the only purpose of demonstrating the existence of a solution to the equation $\bsigma=\sum_{j=1}^m\bsigma_{j}$ satisfying the conditions $\sum_{i\in S_{j}}(\bsigma_{j})_i\geq1$ and $\bsigma_{j}\in\R^n_{\geq0}$, provided that  for every $K\subset [m]$
\begin{equation*}\sum_{i\in\bigcup_{k\in K}S_k}(\bsigma)_i\geq|K|.\end{equation*}
\end{rk}

\begin{rk}
Theorem \ref{thm:algorithm} is related to Hall's marriage theorem \cite{H} and the theory of optimal transport. In this direction Thierry Champion could give an alternative proof of the result using von Neumann's minmax theorem.
\end{rk}

We now prove that that Theorem \ref{thm:algorithm} is also true if the inequalities in the statement are strict. Recall our assumption that for $m$ and $n$ positive integers, we fix an application which associates to every $j$ in $[m]$ a set $S_j\subset[n]$.

\begin{cor}\label{cor:algorithm}
Any $\bsigma$ in $\R^n_{+}$, such that for every $K\subset [m]$

\begin{equation}\label{eq:hyp-algor-cor}\sum_{i\in\bigcup_{k\in K}S_k}(\bsigma)_i>|K|,\end{equation}
can be written as a sum $\bsigma=\sum_{j\in [m]}\bsigma_j$, where the vectors $\bsigma_j$ lie in $\R^n_{+}$, and such that \begin{equation}\label{eq:thesis-algor-cor}\sum_{i\in S_j}(\bsigma_j)_i>1.
\end{equation} 
\end{cor}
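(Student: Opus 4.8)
The plan is to obtain Corollary \ref{cor:algorithm} as a soft perturbation of Theorem \ref{thm:algorithm}, which already treats the non-strict inequalities. The idea is to shave off a small uniform amount of mass from $\bsigma$, apply Theorem \ref{thm:algorithm} to the shaved vector, and then hand that mass back evenly to the $m$ pieces produced by the theorem. Concretely, I would fix $\eta>0$ small, set $\boldsymbol\tau:=\bsigma-m\eta\,\boldsymbol 1$ where $\boldsymbol 1=(1,\dots,1)\in\R^n$, apply Theorem \ref{thm:algorithm} to $\boldsymbol\tau$ to get $\boldsymbol\tau=\sum_{j\in[m]}\boldsymbol\tau_j$ with $\boldsymbol\tau_j\in\R^n_{\geq0}$ and $\sum_{i\in S_j}(\boldsymbol\tau_j)_i\geq1$, and finally put $\bsigma_j:=\boldsymbol\tau_j+\eta\,\boldsymbol 1$. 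Adding $\eta$ to every coordinate simultaneously pushes each $\boldsymbol\tau_j\in\R^n_{\geq0}$ into $\R^n_+$ and upgrades $\sum_{i\in S_j}(\boldsymbol\tau_j)_i\geq1$ to $\sum_{i\in S_j}(\bsigma_j)_i\geq1+\eta|S_j|>1$: indeed $S_j\neq\emptyset$, because taking $K=\{j\}$ in \eqref{eq:hyp-algor-cor} gives $\sum_{i\in S_j}(\bsigma)_i>1$, so $|S_j|\geq1$. Also $\sum_{j\in[m]}\bsigma_j=\boldsymbol\tau+m\eta\,\boldsymbol1=\bsigma$, which is the required decomposition.

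The only point that needs care is to check that $\boldsymbol\tau$ still satisfies the hypothesis of Theorem \ref{thm:algorithm}, i.e. $\sum_{i\in\bigcup_{k\in K}S_k}(\boldsymbol\tau)_i\geq|K|$ for every $K\subset[m]$, together with $\boldsymbol\tau\in\R^n_{\geq0}$. Both hold once $\eta$ is small enough, and here I would use that $[m]$ has only finitely many subsets: the slack $\delta:=\min_{\emptyset\neq K\subset[m]}\big(\sum_{i\in\bigcup_{k\in K}S_k}(\bsigma)_i-|K|\big)$ is a strictly positive real by \eqref{eq:hyp-algor-cor}, and for any nonempty $K$ one has $\sum_{i\in\bigcup_{k\in K}S_k}(\boldsymbol\tau)_i=\sum_{i\in\bigcup_{k\in K}S_k}(\bsigma)_i-m\eta\,\big|\bigcup_{k\in K}S_k\big|\geq|K|+\delta-m\eta n$ using $\big|\bigcup_{k\in K}S_k\big|\leq n$; the case $K=\emptyset$ is trivial. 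Hence any $\eta\leq\delta/(mn)$ preserves the hypothesis, and requiring in addition $\eta\leq\frac1{mn}\min_{i\in[n]}(\bsigma)_i$ (possible since $\bsigma\in\R^n_+$) guarantees $(\boldsymbol\tau)_i=(\bsigma)_i-m\eta\geq0$ for all $i$, so $\boldsymbol\tau\in\R^n_{\geq0}$. So I would simply take $\eta:=\frac1{mn}\min\{\delta,\ \min_{i\in[n]}(\bsigma)_i\}$.

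There is essentially no hard step: all the combinatorial content lives in Theorem \ref{thm:algorithm}, and the only mild subtlety is making the choice of $\eta$ uniform over the finitely many subsets $K$, which the bound $\big|\bigcup_{k\in K}S_k\big|\leq n$ handles. I would present the argument in the three moves above — choose $\eta$, verify the hypotheses of Theorem \ref{thm:algorithm} for $\boldsymbol\tau$, and then define $\bsigma_j=\boldsymbol\tau_j+\eta\,\boldsymbol1$ and check that $\bsigma=\sum_j\bsigma_j$, that each $\bsigma_j\in\R^n_+$, and that $\sum_{i\in S_j}(\bsigma_j)_i>1$ — and conclude.
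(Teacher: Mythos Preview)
Your proposal is correct and follows essentially the same route as the paper: both shave off a small strictly positive vector from $\bsigma$, apply Theorem \ref{thm:algorithm} to the shaved vector, and then distribute the removed mass evenly among the $m$ pieces to upgrade the non-strict conclusions to strict ones. Your choice $\boldsymbol\lambda=m\eta\,\boldsymbol1$ is simply a specific instance of the paper's generic small $\boldsymbol\lambda$, and your verification that $S_j\neq\emptyset$ and your explicit bound on $\eta$ make the argument slightly more detailed than the paper's, but the idea is identical.
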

\begin{proof}
The result follows from Theorem \ref{thm:algorithm} and the fact that the set of $\bsigma\in \R^n_+$ satisfying \eqref{eq:hyp-algor-cor} is an open set. More precisely, for each $i\in [n]$ consider $\lambda_i>0$ small enough such that $\bsigma':=\bsigma-\boldsymbol\lambda$ still satisfies \eqref{eq:hyp-algor-cor}, and $\bsigma'\in \R^n_+$. By means of Theorem \ref{thm:algorithm} there are $\bsigma_j'\in \R^n_{\geq0}$ for $1\leq j\leq m$ satisfying \eqref{eq:thesis-algor-thm}, and such that $\bsigma'=\sum_{j=1}^m \bsigma_j'$. It follows that the vectors $\bsigma_j:=\bsigma_j+ \frac{1}{m}\boldsymbol\lambda\in\R^n_+$ satisfy \eqref{eq:thesis-algor-cor} and the result follows.
\end{proof}

\end{document}